\newtheorem{theorem}{Theorem}[section]
\newtheorem{lemma}[theorem]{Lemma}
\newtheorem{proposition}[theorem]{Proposition}
\newtheorem{corollary}[theorem]{Corollary}
\newtheorem{StandingAssumption}[theorem]{Standing Assumption}
\theoremstyle{definition}
\newtheorem{definition}[theorem]{Definition}
\newtheorem{conjecture}[theorem]{Conjecture}
\theoremstyle{remark}
\newtheorem{remark}[theorem]{Remark}
\numberwithin{equation}{section}
\newcommand{\T}{\mathrm{T}}
\newcommand{\R}{\mathbb{R}}
\newcommand{\Z}{\mathbb{Z}}
\newcommand{\x}{\mathbf{x}}
\newcommand{\I}{\mathrm{I}}
\newcommand{\N}{\mathbb{N}}
\newcommand{\SO}{\mathrm{SO}}
\newcommand{\SL}{\mathrm{SL}}
\newcommand{\inv}{^{-1}}
\newcommand{\vect}[1]{\boldsymbol{\mathbf{#1}}}
\newcommand{\Bad}{\mathrm{\mathbf{Bad}}}
\newcommand{\Bd}{\mathrm{\mathbf{Bd}}}
\newcommand{\Par}{\mathrm{\mathbf{Par}}}
\newcommand{\diag}{\mathrm{diag}}
\newcommand{\Span}{\mathrm{Span}}
\newcommand{\fk}{\mathfrak{k}}
\newcommand{\cL}{\mathcal{L}}
\newcommand{\cW}{\mathcal{W}}
\newcommand{\bphi}{\boldsymbol{\mathbf{\varphi}}}
\begin{document}

\title{Badly approximable points on manifolds and unipotent orbits in homogeneous spaces}

\author{Lei Yang }
\address{College of Mathematics, Sichuan University, Chengdu, Sichuan, 610065, China}

\email{lyang861028@gmail.com}
\thanks{The author is supported in part by ISF grant 2095/15, ERC grant AdG 267259, NSFC grant 11743006 and a start up research funding from Sichuan University}

\subjclass[2010]{11J13; 11J83; 22E40; 37A17}

\date{}


\keywords{Diophantine approximation, badly approximable points, the linearization technique, unipotent orbits}

\begin{abstract}
In this paper, we study the weighted $n$-dimensional badly approximable points on manifolds. Given a $C^n$ differentiable non-degenerate submanifold $\mathcal{U} \subset \R^n$, we will show that any countable intersection of the sets of the weighted badly approximable points on $\mathcal{U}$ has full Hausdorff dimension. This strengthens a result of Beresnevich \cite{Beresnevich} by removing the condition on weights and weakening the smoothness condition on manifolds. Compared to the work of Beresnevich, our approach relies on homogeneous dynamics. It turns out that in order to solve this problem, it is crucial to study the distribution of long pieces of unipotent orbits in homogeneous spaces. The proof relies on the linearization technique and representations of $\SL(n+1,\R)$.
\end{abstract}
\maketitle
\section{Introduction}
\label{sec-introduction}

\subsection{Badly approximable vectors}
\label{subsec-badly-approximable-vectors}
Given a positive integer $n$, a vector $\vect{r} = (r_1 , \dots, r_n)$ is called a $n$-dimensional weight if $r_i \geq 0$ for $i=1,\dots, n$ and 
\[r_1 + \cdots + r_n = 1.\] 
The weighted version of Dirichlet's approximation theorem says the following:
\begin{theorem}[Dirichlet's Theorem, 1842]
For any $n$-dimensional weight $\vect{r} = (r_1,\dots, r_n)$, the following statement holds. For any vector $\vect{x} = (x_1, \dots , x_n) \in \R^n$ and any $N > 1$, there exists an integer vector $\vect{p} = (p_1, \dots, p_n , q) \in \Z^{n+1}$such that $0 < |q| \leq N$ and 
\[ |q x_i + p_i| \leq N^{-r_i} \text{, for } i =1,\dots , n.\]
\end{theorem}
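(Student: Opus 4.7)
The plan is to deduce the weighted Dirichlet theorem from Minkowski's first theorem in the geometry of numbers. The preliminary step is to reduce to the case where every $r_i$ is strictly positive: if $r_i = 0$ then $N^{-r_i} = 1$ and I can simply pick $p_i$ to be the nearest integer to $-qx_i$, so these coordinates impose no constraint on the argument below. After discarding them I may assume $r_i > 0$ for every $i$, with $\sum r_i = 1$ still holding on the surviving indices.

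The main step is then to pair a suitable unimodular lattice with a symmetric convex body. I would take the lattice $\Lambda = u_{\vect{x}}\,\Z^{n+1}$ in $\R^{n+1}$, where $u_{\vect{x}}$ is the unipotent $(n+1)\times(n+1)$ matrix with $\vect{x}^{\T}$ in its last column and $1$'s on the diagonal, so that the nonzero vectors of $\Lambda$ take the form $(qx_1+p_1,\ldots,qx_n+p_n,q)$ with $(p_1,\ldots,p_n,q)\in\Z^{n+1}\setminus\{0\}$. Against this lattice I would pair the closed box
\[
K_N = \{(y_1,\ldots,y_n,t) \in \R^{n+1} : |y_i|\le N^{-r_i},\ |t|\le N\},
\]
whose Lebesgue measure is $2^{n+1} N^{1-\sum r_i} = 2^{n+1}$, matching the critical Minkowski bound for a unimodular lattice. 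Minkowski's convex body theorem in its formulation for compact centrally symmetric convex bodies then furnishes a nonzero $(p_1,\ldots,p_n,q)\in\Z^{n+1}$ with $|q|\le N$ and $|qx_i+p_i|\le N^{-r_i}$ for all $i$.

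The last point is to exclude $q = 0$, and this is where the preliminary reduction matters: were $q=0$, then $|p_i|\le N^{-r_i}<1$ for every $i$ (using $N>1$ and $r_i>0$), forcing $p_i=0$ and contradicting nonvanishing of the lattice vector. I do not expect any serious obstacle in this argument; it is a textbook application of the geometry of numbers, and the only genuinely delicate bookkeeping is calibrating the side lengths of $K_N$ to the weight vector $\vect{r}$ so that $\mathrm{vol}(K_N)$ lands exactly on the Minkowski bound and so that the zero-weight coordinates are peeled off at the start.
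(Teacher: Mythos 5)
Your argument is correct and complete. The paper states this theorem without proof, citing it as a classical 1842 result, so there is no in-paper argument to compare against; the Minkowski convex-body proof you give — pairing the unimodular lattice $V(\vect{x})\Z^{n+1}$ with the closed box $K_N$ of volume exactly $2^{n+1}$, and invoking the compact form of Minkowski's first theorem to find a nonzero lattice point — is the standard textbook route. Two details you handled that are easy to overlook: Minkowski with equality in the volume bound does require the compact formulation, which you correctly invoke, and the preliminary peeling-off of the zero-weight coordinates is genuinely necessary, since without it a coordinate with $r_i=0$ could absorb a nonzero $p_i$ at $q=0$ and the exclusion of $q=0$ would fail; your rounding of those $p_i$ after $q$ is determined closes that gap cleanly. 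No issues.
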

This theorem is the starting point of simutaneous Diophantine approximation. Using this theorem, one can easily show the following:
\begin{corollary}
\label{cor:dirichlet-diophantine-approximation}
For any vector $\x = (x_1, \dots, x_n) \in \R^n$, there are infinitely many integer vectors $\vect{p} = (p_1,\dots, p_n , q)\in \Z^{n+1}$ with $q \neq 0$ satisfying the following:
\begin{equation}
\label{equ:weighted-diophantine}
 |q|^{r_i} |q x_i + p_i| \leq 1 \text{ for } i = 1 ,\dots, n. 
\end{equation}
\end{corollary}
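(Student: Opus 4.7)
My plan is to deduce the corollary directly from Dirichlet's theorem by applying it along a sequence of parameters $N_k \to \infty$ and then arguing that the resulting integer vectors cannot all coincide.

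First I would fix an unbounded sequence $N_1 < N_2 < \cdots$ and, for each $k$, invoke Dirichlet's theorem to obtain $(p_1^{(k)}, \dots, p_n^{(k)}, q^{(k)}) \in \Z^{n+1}$ with $0 < |q^{(k)}| \leq N_k$ and $|q^{(k)} x_i + p_i^{(k)}| \leq N_k^{-r_i}$ for each $i$. The weighted inequality \eqref{equ:weighted-diophantine} then follows at once: since $r_i \geq 0$, raising the bound on $|q^{(k)}|$ to the $r_i$-th power yields $|q^{(k)}|^{r_i} \leq N_k^{r_i}$, and multiplying by the other inequality gives $|q^{(k)}|^{r_i}|q^{(k)} x_i + p_i^{(k)}| \leq 1$.

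The substantive step is to extract infinitely many distinct vectors from this family. Because $r_1+\cdots+r_n = 1$, at least one index $i_0$ satisfies $r_{i_0} > 0$, and I would split into two cases depending on whether the $x_i$ attached to positive weights are rational. If $x_{i_0}$ is irrational for some such $i_0$, I argue by contradiction: were only finitely many distinct vectors $(p^{(k)}, q^{(k)})$ produced, the pigeonhole principle would give a single $(p^*, q^*)$ that occurs for arbitrarily large $N_k$; but then $|q^* x_{i_0} + p_{i_0}^*| \leq N_k^{-r_{i_0}} \to 0$, forcing $x_{i_0} = -p_{i_0}^*/q^*$ to be rational, a contradiction. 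In the remaining case, where every $x_i$ with $r_i > 0$ is rational, I construct solutions directly: choose a positive integer $Q$ such that $Q x_i \in \Z$ whenever $r_i > 0$, and for each positive integer $m$ put $q = mQ$, take $p_i = -q x_i \in \Z$ when $r_i > 0$, and take $p_i$ to be a nearest integer to $-q x_i$ when $r_i = 0$. The left-hand side of \eqref{equ:weighted-diophantine} is then $0$ on the first block of indices and at most $1/2$ on the second, and letting $m$ range over $\N$ produces infinitely many distinct valid triples.

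I do not anticipate a serious obstacle; the only point requiring care is this dichotomy, because coordinates with weight $r_i = 0$ do not force rationality in the finiteness argument and must therefore be absorbed into the explicit construction when all the remaining $x_i$ happen to be rational.
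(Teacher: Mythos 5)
Your proof is correct. The paper itself does not supply an argument for this corollary (it simply states that it follows easily from Dirichlet's theorem), so there is no paper proof to compare against; your argument is the standard deduction, applying Dirichlet along $N_k \to \infty$ and then ensuring distinctness. You are right to single out the dichotomy over rationality: the naive claim that the solutions from Dirichlet are automatically pairwise distinct fails when every $x_i$ with $r_i>0$ is rational (e.g.\ $\x=(1/2,1/3)$, $\vect{r}=(1/2,1/2)$ yields the same $(p^*,q^*)$ for all large $N$), and coordinates with $r_i=0$ contribute nothing to the contradiction in the irrational case, so they must be handled by the nearest-integer choice in the explicit construction. Both cases are carried out correctly: in the first, a repeated solution would force $q^*x_{i_0}+p_{i_0}^*=0$ with $q^*\neq 0$, contradicting irrationality; in the second, $q=mQ$ with $p_i=-qx_i$ for $r_i>0$ and nearest integers otherwise gives $|q|^{r_i}|qx_i+p_i|\le 1/2$ for all $i$ and infinitely many distinct vectors as $m$ varies.
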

\par For almost every vector $\x \in \R^n$, the above corollary remains true if we replace $1$ with any smaller constant $c>0$ on the right hand side of \eqref{equ:weighted-diophantine}, see \cite{Daven_Schm} and \cite{Klein_Weiss}. The exceptional vectors are called $\vect{r}$-weighted badly approximable vectors. We give the formal definition as follows:
\begin{definition}
\label{def:badly-approximable}
Given an $n$-dimensional weight $\vect{r} = (r_1, \dots, r_n)$, a vector $\x \in \R^n$ is called $\vect{r}$-weighted badly approximable if there exists a constant $c >0$ such that for any $\vect{p} =(p_1, \dots, p_n, q) \in \Z^{n+1}$ with $q \neq 0$,
\[
\max_{1 \leq i \leq n} |q|^{r_i} |q x_i + p_i| \geq c .
\] 
\end{definition}
For an $n$-dimensional weight $\vect{r}$, let us denote the set of $\vect{r}$-weighted badly approximable vectors in $\R^n$ by $\Bad(\vect{r})$. In particular, $\Bad(1)$ denotes the set of badly approximable numbers.
\par $\Bad(\vect{r})$ is a fundamental object in metric Diophantine approximation. The study of its properties has a long history and attracts people from both number theory and homogeneous dynamics. In view of \cite{Klein_Weiss}, we know that the Lebesgue measure of $\Bad(\vect{r})$ is zero. However, it turns out that every $\Bad(\vect{r})$ has full Hausdorff dimension, cf. \cite{Jarnik}, \cite{Schmidt-Game}, \cite{Pollington-Velani} and \cite{Klein-Weiss-Modified-Schmidt}. The intersections of $\Bad(\vect{r})$ with different weights $\vect{r}$ have been of major interest for several decades. In particular, Wolfgang M. Schmidt conjectured the following:
\begin{conjecture}[Schmidt's Conjecture, see \cite{Schmidt}]
\label{conj:schmidts-conjecture}
\par For $n =2$,
\[\Bad( 1/3, 2/3) \cap \Bad( 2/3, 1/3) \neq \emptyset.\]
\end{conjecture}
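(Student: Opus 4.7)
The plan is to translate Schmidt's conjecture into a statement in homogeneous dynamics via the Dani correspondence. For a weight $\vect{r} = (r_1, r_2)$ set $a_t^{\vect{r}} = \diag(e^{r_1 t}, e^{r_2 t}, e^{-t})$, and for $\x \in \R^2$ let $u(\x) \in \SL(3, \R)$ be the unipotent matrix whose last column is $(x_1, x_2, 1)^T$. Then $\x \in \Bad(\vect{r})$ if and only if the orbit $\{a_t^{\vect{r}} u(\x) \Z^3 : t \geq 0\}$ is bounded in $X := \SL(3, \R)/\SL(3, \Z)$. Schmidt's conjecture reduces to exhibiting a single $\x \in \R^2$ whose orbit is bounded under both $a_t^{(1/3, 2/3)}$ and $a_t^{(2/3, 1/3)}$.

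I would construct such an $\x$ by a nested Cantor-like procedure in $\R^2$. Fix a large compact set $K \subset X$ and a sequence of times $t_n \to \infty$. Inductively build a family $\mathcal{F}_n$ of small boxes such that for every $\x$ lying in some box of $\mathcal{F}_n$ and each of the two weights, the trajectory up to time $t_n$ stays in $K$. To pass from level $n$ to level $n+1$, subdivide each box into many smaller sub-boxes and discard those on which the orbit under either flow exits $K$ between times $t_n$ and $t_{n+1}$.

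The quantitative control on the number of discarded sub-boxes would come from Kleinbock--Margulis type non-divergence estimates, refined via the linearization technique. The linearization expresses the event \emph{orbit near the cusp of $X$} in terms of certain polynomial functions, associated to periodic unipotent orbits corresponding to rational flags, becoming small. These polynomials decompose under the relevant $\SL(2,\R)$-representations on exterior powers $\wedge^k \R^3$; because the representations are finite-dimensional and the polynomials of bounded degree, a suitable non-divergence bound controls the proportion of bad sub-boxes at each scale.

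The main obstacle is coordinating the construction for the two different weights. The flows $a_t^{(1/3, 2/3)}$ and $a_t^{(2/3, 1/3)}$ have distinct sets of ``dangerous'' rational directions and excite different $\SL(2,\R)$-subrepresentations, so the sub-box dimensions at each level must be chosen asymmetrically to accommodate both flows simultaneously. Concretely, one must interleave the two removal steps so that at every scale the total measure of $\x$'s bad for either flow remains a definite fraction less than one, which is needed to maintain a non-trivial Cantor structure. This balancing, together with the associated $\SL(2,\R)$-representation-theoretic estimates, forms the heart of the argument, and upgrading from non-emptiness to a full Hausdorff dimension statement amounts to tightening the quantitative bounds so that the Cantor dimension approaches $2$.
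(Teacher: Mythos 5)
This statement is a \emph{conjecture} in the paper, not a theorem the paper proves: the text records Schmidt's 1982 conjecture and then cites Badziahin--Pollington--Velani \cite{BPV} (and the strengthening by An \cite{An}) as having resolved it. There is therefore no ``paper's own proof'' of this statement for your attempt to be checked against; the paper uses it only as motivation. The paper's Theorem~\ref{thm:goal} does imply the conjecture as a corollary (take $n=2$, $\mathcal{U}$ any analytic non-degenerate curve in $\R^2$, and $W = \{(1/3,2/3),(2/3,1/3)\}$: the intersection on the curve has full dimension, hence is non-empty), but the paper never spells this out as a proof of Schmidt's conjecture.

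Your proposal is a plausible high-level blueprint, not a proof, and it follows a different route than the paper in two respects. First, you propose to build a two-dimensional Cantor set directly in $\R^2$; this is the planar approach of \cite{BPV} and \cite{An}, whereas the paper restricts to a non-degenerate curve and builds a one-dimensional Cantor set on the parameter interval, which is what lets the counting lemmas of \S\ref{sec:count-dangerous-intervals} be stated and proved in terms of subintervals of $I$. Second, and more importantly, the step you yourself identify as ``the heart of the argument''---coordinating the removals for the two weights so that the discarded fraction stays bounded away from one at every scale, despite the two flows having different expansion rates and hence dangerous rational data at incompatible scales---is precisely where all of the work lies, and it is left entirely unaddressed. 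Saying that Kleinbock--Margulis non-divergence and $\SL(2,\R)$-representation estimates ``would'' control the bad proportion does not constitute a proof: the paper's own version of this (Proposition~\ref{prop:count-dangerous-intervals}, Lemmas~\ref{lm:key-lemma} and~\ref{lm:2nd-case-key-lemma}, Proposition~\ref{prop:count-extremely-dangerous-intervals}) occupies several pages of delicate estimates that have no analogue in your sketch, and the planar coordination problem you flag is solved in \cite{BPV} by an intricate ad hoc construction and in \cite{An} by a hyperplane-absolute-winning argument, neither of which you reproduce. As written, the proposal records the shape of a possible argument but supplies none of the quantitative content that would be needed to make it one.
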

In 2011, Badziahin, Pollington and Velani \cite{BPV} settled this conjecture by showing the following: for any countable collection of $2$-dimensional weights $\{(i_t, j_t): t  \in \N\}$, if $\liminf_{t \to \infty} \min\{i_t, j_t\} >0$, then 
\[\dim_H\left(\bigcap_{t =1}^{\infty} \Bad( i_t, j_t )\right) = 2,\]
where $\dim_H(\cdot)$ denotes the Hausdorff dimension of a set. An (see \cite{An-2013-BLMS} and \cite{An}) later strengthens their result by removing the condition on the weights. In fact, in \cite{An}, An proves the following much stronger result: for any $2$-dimensional weight $(r_1, r_2)$, $\Bad(r_1, r_2)$ is $(24\sqrt{2})\inv$-winning. Here a set is called $\alpha$-winning if it is a winning set for Schmidt's $(\alpha, \beta)$-game for any $\beta \in (0,1)$. This statement implies that any countable intersection of sets of weighted badly approximable vectors is $\alpha$-winning. Nesharim and Simmons \cite{Nesharim-Simmons} further show that every $\Bad(r_1, r_2)$ is hyperplane absolute winning. The reader is referred to \cite{Schmidt-Game} for more details of Schmidt's game and to \cite{broderick_fishman_kleinbock_reich_weiss_2012} for details about hyperplane winning sets.
\par Badly approximable vectors lying on planar curves are studied by An, Beresnevich and Velani \cite{An-Beresnevich-Velani}. They prove that for any non-degenerate planar curve $\mathcal{C}$ and any weight $(r_1, r_2)$, $\Bad(r_1, r_2)\cap \mathcal{C}$ is $\frac{1}{2}$-winning. 
\par For $n \geq 3$, the problem turns out to be essentially more difficult. Beresnevich \cite{Beresnevich} makes the first breakthrough:
\begin{theorem}[see~{\cite[Corollary 1]{Beresnevich}}]
\label{thm:beresnevich-thm}
Let $n \geq 2$ be an integer and $\mathcal{U} \subset \R^n$ be an analytic and non-degenerate submanifold in $\R^n$. Let $W$ be a finite or countable set of $n$-dimensional weights such that $\inf_{\vect{r} \in W}\{ \tau(\vect{r})\} >0$ where $\tau(r_1, \dots, r_n) := \min\{ r_i : r_i >0\}$ for an $n$-dimensional weight $(r_1, \dots, r_n)$. Then 
\[ \dim_H \left( \bigcap_{\vect{r} \in W} \Bad(\vect{r}) \cap \mathcal{U} \right) = \dim \mathcal{U}. \] 
\end{theorem}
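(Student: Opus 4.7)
The plan is to translate the Diophantine property into a dynamical one via the Dani correspondence and then run a Schmidt-game argument along $\mathcal{U}$. First, a preliminary slicing reduction lets us take $\mathcal{U}=\varphi(I)$ for a non-degenerate analytic curve $\varphi:I\to\R^n$: a non-degenerate analytic manifold is locally foliated by non-degenerate analytic curves in $\dim\mathcal{U}$ independent directions, and a Marstrand-slicing argument then promotes any full-Hausdorff-dimension statement along each curve to the desired statement on $\mathcal{U}$. The Dani correspondence says that, setting $a_t^{\vect{r}}=\diag(e^{r_1 t},\dots,e^{r_n t},e^{-t})$ and letting $u(\x)\in\SL(n+1,\R)$ denote the standard unipotent matrix encoding $\x$, the condition $\x\in\Bad(\vect{r})$ is equivalent to the trajectory $\{a_t^{\vect{r}}u(\x)\Z^{n+1}:t\ge 0\}$ avoiding some cusp neighbourhood $\Omega_\varepsilon\subset X=\SL(n+1,\R)/\SL(n+1,\Z)$ consisting of lattices with a nonzero vector shorter than $\varepsilon$. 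The task becomes producing a large set of parameters $s\in I$ whose lifted trajectory $t\mapsto a_t^{\vect{r}}u(\varphi(s))\Z^{n+1}$ avoids $\Omega_{\varepsilon(\vect{r})}$ simultaneously for every $\vect{r}\in W$.

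To produce such $s$, I would play Schmidt's $(\alpha,\beta)$-game on $I$. Via the Dani dictionary, a ball of radius $\rho_k$ at stage $k$ corresponds to a time scale $t_k\sim-\log\rho_k$; after diagonally enumerating $W\times\N$, Alice's move at stage $k$ must discard those parameters whose trajectory enters the current cusp neighbourhood during the next time window. The essential input is Kleinbock--Margulis quantitative non-divergence along non-degenerate curves: the relevant coordinates of $a_t^{\vect{r}}u(\varphi(s))$ are $(C,\alpha)$-good polynomial functions of $s$, so the "bad" parameter set has measure polynomially small in the cusp depth and can be covered by a controlled number of small subintervals. Alice can therefore always find a safe sub-ball, with a winning parameter depending only on the non-degeneracy data of $\varphi$ and on $\tau_0:=\inf_{\vect{r}\in W}\tau(\vect{r})>0$. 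Since $\alpha$-winning sets are closed under countable intersections and have full Hausdorff dimension in $I$, this yields $\dim_H\varphi\inv\bigl(\bigcap_{\vect{r}\in W}\Bad(\vect{r})\bigr)=1$, which the slicing step upgrades to $\dim_H\bigl(\bigcap_{\vect{r}\in W}\Bad(\vect{r})\cap\mathcal{U}\bigr)=\dim\mathcal{U}$.

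The main obstacle is the uniform non-divergence estimate. The Kleinbock--Margulis bounds for $a_t^{\vect{r}}u(\varphi(s))$ depend on the weight exponents through their minima and pairwise products; when some $r_i$ tends to zero the successive-minima analysis degenerates, because the $\vect{r}$-weighted dilation collapses certain tangent directions of the curve. The hypothesis $\tau(\vect{r})\ge\tau_0>0$ is precisely what keeps those constants uniform and lets the game proceed with a single winning parameter $\alpha$ valid for every $\vect{r}\in W$. Removing this hypothesis---the actual novelty of the paper---appears to require substantially finer dynamical tools, specifically the linearization technique and $\SL(2,\R)$-representation theory advertised in the abstract, in order to handle unipotent trajectories near the boundary of the weight simplex where the standard non-divergence argument breaks down.
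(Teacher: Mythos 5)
The paper does not actually prove Theorem \ref{thm:beresnevich-thm}: it quotes it from Beresnevich and then proves the strictly stronger Theorem \ref{thm:goal} (without the $\inf_{\vect r\in W}\tau(\vect r)>0$ hypothesis), whose proof of course recovers the cited statement. Your high-level plan --- slice to curves, pass through the Dani correspondence, invoke Kleinbock--Margulis non-divergence --- matches the paper in outline, but the packaging via Schmidt's game is genuinely different from what both the paper and Beresnevich do: they use a quantitative Cantor-type construction (the ``$M$-Cantor rich'' machinery of Theorems \ref{thm:hausdorff-dimension-m-rich} and \ref{thm:intersection-m-rich}). This is not a cosmetic difference. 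The Cantor construction only needs a positive proportion of the $R$ subintervals to survive at each generation, with that proportion estimated level by level; a Schmidt game requires Alice to produce a single safe sub-ball of a fixed proportional radius at every stage, which is a much stronger structural demand on the bad set, and nothing in your sketch supplies it.

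The concrete gap is in the sentence where you invoke $(C,\alpha)$-good functions and conclude that the bad parameter set ``can be covered by a controlled number of small subintervals.'' Kleinbock--Margulis non-divergence (Theorem \ref{thm:non-divergence-kleinbock-margulis} in the paper) gives a \emph{measure} bound, not a covering bound, and it gives it only under the hypothesis that every primitive sublattice $\vect v\in\bigwedge^i\Z^{n+1}\setminus\{\vect 0\}$ satisfies $\max_{x\in J}\|g_{\vect r}(t)U(\varphi(x))\vect v\|\geq\rho^i$. That hypothesis genuinely fails for some sublattices on some subintervals --- these are precisely the ``dangerous'' and ``extremely dangerous'' intervals of Definition \ref{def:dangerous-interval} --- and controlling their number and shape is the entire technical content of the argument, whether done elementarily (Beresnevich) or via linearization, $\SL(2,\R)$-weight decompositions and Minkowski reduction (Proposition \ref{prop:count-dangerous-intervals}, Lemmas \ref{lm:key-lemma} and \ref{lm:2nd-case-key-lemma} here). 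Your write-up treats this machinery as needed only to remove the $\tau$-hypothesis, but it is already needed to prove the theorem as stated: $\inf\tau>0$ does not make the non-divergence hypothesis hold, nor does it turn a small-measure bad set into one with the clustered structure a Schmidt game would require. What the $\tau$-condition buys in Beresnevich's original proof is a simpler count of the dangerous rational data, not a way to skip the issue. (As a smaller point, the reduction from manifolds to curves used in the paper and in Beresnevich is a fibering lemma, not Marstrand slicing, though the effect is the one you describe.)
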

\begin{remark}
\par Here a submanifold is called non-degenerate if the derivatives at each point span the whole space. In the setting of analytic submanifolds, this is equivalent to that the submanifold is not contained in any hyperplane of $\R^n$.
\end{remark}

\subsection{Notation}
\label{subsec-notation}
\par In this paper, we will fix the following notation.
\par For a set $\mathcal{S}$, let $\sharp\mathcal{S}$ denote the cardinality of $\mathcal{S}$. For a measurable subset $E \subset \R$, let $m(E)$ denote its Lebesgue measure.
\par For a matrix $M$, let $M^{\T}$ denote its transpose. For integer $k >0$, let $\I_k$ denote the $k$-dimensional identity matrix.
\par Let $\|\cdot\|$ denote the supremum norm on $\R^n$ and $\R^{n+1}$. Let $\|\cdot\|_2$ denote the Euclidean norm on $\R^n$ and $\R^{n+1}$. For $\vect{x} \in \R^{n+1}$ (or $\in \R^n$) and $r >0$, let $B(\vect{x}, r)$ denote the closed ball in $\R^{n+1}$ (or $\R^n$) centered at $\vect{x}$ of radius $r$, with respect to $\|\cdot\|$. For every $i = 1, \dots , n+1$, there is a natural supremum norm on $\bigwedge^i \R^{n+1}$. Let us denote it by $\|\cdot\|$.
\par Throughout this paper, when we say that $C$ is a constant, we always mean that $c$ is a constant only depending on the dimension $n$. For quantities $A$ and $B$, let us use $A \ll B$ to mean that there is a constant $C>0$ such that $A \leq C B$. Let $A \asymp B$ mean that $A \ll B$ and $B \ll A$. For a quantity $A$, let $O(A)$ denote a quantity which is $\ll A$ or a vector whose norm is $\ll A$.

\subsection{Main results}
\label{subsec-main-result}
In this paper, we will strengthen Theorem \ref{thm:beresnevich-thm} by removing the condition on weights and weakening the analytic condition to differentiable condition on submanifolds.

\par To simplify the exposition, in this paper, we will focus on the case of curves:
\begin{theorem}
\label{thm:main-thm}
Let $\bphi: I = [a,b] \to \R^n$ be a $C^n$ differentiable and non-degenerate curve in $\R^n$. Let $W$ be a finite or countable set of $n$-dimensional weights. Then 
\[ \dim_H \left( \bigcap_{\vect{r} \in W} \Bad(\vect{r}) \cap \bphi(I) \right) = 1. \] 
\end{theorem}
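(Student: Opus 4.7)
The plan is to use the Dani correspondence to translate $\Bad(\vect{r})$ into a boundedness condition for orbits in the homogeneous space $X = \SL(n+1,\R)/\SL(n+1,\Z)$, and then to build a Cantor-like subset of $I$ of full Hausdorff dimension whose points lie in $\Bad(\vect{r})$ simultaneously for every $\vect{r} \in W$. Concretely, for each weight $\vect{r}\in W$, set
$$g_t^{\vect{r}} = \diag(e^{r_1 t}, \dots, e^{r_n t}, e^{-t}), \qquad u(\x) = \begin{pmatrix} \I_n & \x \\ 0 & 1 \end{pmatrix}.$$
By Minkowski's theorem, $\x \in \Bad(\vect{r})$ is equivalent to the forward orbit $\{g_t^{\vect{r}} u(\x)\Z^{n+1} : t \geq 0\}$ being bounded in $X$. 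It therefore suffices to show that the set of $s \in I$ for which $\{g_t^{\vect{r}} u(\varphi(s))\Z^{n+1} : t \geq 0\}$ stays bounded for every $\vect{r} \in W$ has Hausdorff dimension $1$.

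Next, I would enumerate $W = \{\vect{r}^{(1)}, \vect{r}^{(2)}, \dots\}$ and build the Cantor set inductively. Fix a growing sequence of times $T_\ell$ and an exhaustion of $X$ by nested compact sets $K_\ell$. At step $\ell$, partition each surviving interval into pieces of length $\asymp e^{-T_\ell}$ and discard those on which the orbit leaves $K_\ell$ during some time $t \in [T_{\ell-1}, T_\ell]$ for at least one of the first $\ell$ weights. The key ingredient I need is a uniform escape-rate bound: for every sub-interval $J$ and every $\vect{r}$,
$$m\bigl(\{s \in J : g_t^{\vect{r}} u(\varphi(s))\Z^{n+1} \notin K_\ell \text{ for some } t \in [T_{\ell-1},T_\ell]\}\bigr) \leq \delta(K_\ell)\, m(J),$$
with $\delta(K_\ell) \to 0$ as $K_\ell$ exhausts $X$. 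Given such a bound, a standard mass-distribution argument (choosing the $T_\ell$ to grow fast enough relative to $\delta(K_\ell)$) produces a Cantor subset of $I$ of Hausdorff dimension $1$ lying in the desired intersection.

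The main obstacle is establishing this uniform escape-rate estimate when $\tau(\vect{r})$ is allowed to tend to $0$, which is exactly the situation excluded in Beresnevich's hypothesis. The direct Kleinbock--Margulis non-divergence bound is not uniform enough in this regime because the $(C,\alpha)$-good constants degenerate as some $r_i \to 0$. My approach would be to use the linearization technique of Dani--Margulis: a cusp excursion is localised by a short lattice vector $v \in \bigwedge^i \Z^{n+1}$, which converts the excursion event into a polynomial inequality on the trajectory $s \mapsto g_t^{\vect{r}} u(\varphi(s)) v$. Non-degeneracy of $\varphi$ guarantees these polynomial maps are non-constant for every primitive $v$, and by decomposing the $g_t^{\vect{r}}$-action through a suitable $\SL(2,\R)$-triple adapted to the non-degenerate curve, one should obtain quantitative $(C,\alpha)$-type estimates that are uniform in $\vect{r}$ and control the Lebesgue measure of the bad set of parameters. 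Summing over the finitely many exterior powers and dangerous invariant subvarieties relevant at each scale then yields the required measure bound, from which the theorem follows by the Cantor construction above.
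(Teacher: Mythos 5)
Your high-level scaffolding (Dani correspondence, Cantor construction, linearization, $\SL(2,\R)$ structure) matches the paper's ingredients, but two of your central claims are incorrect as stated, and these are precisely where the technical content lives.

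First, the ``uniform escape-rate bound'' you posit is false, even for a single fixed weight $\vect{r}$. The Kleinbock--Margulis non-divergence theorem (Theorem \ref{thm:non-divergence-kleinbock-margulis}) gives a bound of the form $m(\{s \in J : \cdots \notin K\}) \ll \epsilon^\alpha m(J)$ \emph{only} under the hypothesis that for every $i$ and every primitive $\vect{v} \in \bigwedge^i \Z^{n+1}$, the supremum of $\|g_{\vect{r}}(t)U(\varphi(s))\vect{v}\|$ over $s \in J$ is bounded below by some fixed $\rho^i$. Your appeal to ``non-degeneracy guarantees these polynomial maps are non-constant'' does not rescue this: non-constancy gives $(C,\alpha)$-goodness, but a $(C,\alpha)$-good function with tiny supremum on $J$ yields a vacuous conclusion. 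For any weight there exist short subintervals $J$ (centered at parameters $s$ where some integer vector $\vect{v}$ becomes abnormally short along the orbit) on which a positive proportion of $J$ escapes, no matter how large $K$ is. These are exactly the ``dangerous'' and ``extremely dangerous'' intervals of Definition \ref{def:dangerous-interval}. The actual work of the paper --- Proposition \ref{prop:count-dangerous-intervals}, Proposition \ref{prop:count-extremely-dangerous-intervals}, and the dichotomy Lemmas \ref{lm:key-lemma} and \ref{lm:key-lemma-extremely-dangerous} --- is a counting argument showing that at every scale the number of such intervals is sub-polynomially small compared with what could be discarded, and this uses the explicit $\SL(2,\R)$-representation decomposition of $\bigwedge^i V$ to locate the short vectors in an elongated box, plus the Bernik--Kleinbock--Margulis estimate for the extremely dangerous regime. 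Your ``summing over dangerous invariant subvarieties'' glosses over this entirely, and without it the proof has no engine.

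Second, your handling of the countable intersection differs structurally from the paper's and is more fragile. You propose enumerating $W$ and running a single inductive construction that handles the first $\ell$ weights at stage $\ell$; this forces you to coordinate time scales $T_\ell$ and dangerous-interval estimates across all $\ell$ weights simultaneously, and it is not obvious the bad sets for different weights can be balanced at a common sequence of scales. The paper instead proves (Theorem \ref{thm:main-task}) that for a \emph{single} weight $\vect{r}$, the set $\varphi\inv(\Bad(\vect{r})\cap\varphi(I))$ is $M$-Cantor rich for a constant $M$ \emph{independent of} $\vect{r}$, and then invokes Beresnevich's abstract Theorems \ref{thm:hausdorff-dimension-m-rich} and \ref{thm:intersection-m-rich}, which say that any countable intersection of $M$-Cantor rich sets is again $M$-Cantor rich and hence has full Hausdorff dimension. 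This modular route completely decouples the intersection problem from the one-weight estimate and is the reason the uniformity in $\vect{r}$ (what you correctly flag as the obstacle after Beresnevich) suffices. You would do well to adopt it rather than trying to run a joint induction over $W$.
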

\par The proof for curves directly applies to any $C^n$ non-degenerate manifolds, see \S \ref{subsec-general-case} for detailed explanation. Therefore, Theorem \ref{thm:main-thm} holds for any $C^n$ non-degenerate manifolds. In Theorem \ref{thm:beresnevich-thm}, the analyticity condition comes from a fiber lemma (cf. \cite[Appendix C]{Beresnevich}) which reduces the general case to the case of curves.
\par In fact, we can prove the following stronger statement:
\begin{theorem}
\label{thm:double-intersection}
Let $W$ be a finite or countable set of $n$-dimensional weights and $\mathcal{F}_n(B)$ be a finite family of $C^n$ differentiable non-degenerate maps $\bphi : [0,1] \to \R^n$. Then 
\[ \dim_H \left( \bigcap_{\bphi \in \mathcal{F}_n(B)} \bigcap_{\vect{r} \in W} \bphi\inv(\Bad(\vect{r}))\right) = 1.\]
\end{theorem}
 For the same reason as above, this statement holds when $[0,1]$ is replaced by a $m$-dimensional ball $B \subset \R^m$ for any $m \leq n$. 
\par Compared with \cite{Beresnevich}, in this paper, we study this problem through homogeneous dynamics and prove Theorem \ref{thm:main-thm} and \ref{thm:double-intersection} using the linearization technique.
\subsection{Bounded orbits in homogeneous spaces}
\label{subsec-bounded-orbits-homogeneous-space}
\par Let us briefly recall the correspondence between Diophantine approximation and homogeneous dynamics. The reader may see \cite{Dani}, \cite{Klein_Mar} and \cite{Klein_Weiss} for more details.
\par Let $G = \SL(n+1, \R)$, and $\Gamma  = \SL(n+1, \Z)$. The homogeneous space $X = G/\Gamma$ can be identified with the space of unimodular lattices in $\R^{n+1}$. For any $g \in \SL(n+1, \R)$, the point $g\Gamma$ is identified with the lattice $g\Z^{n+1}$. For $\epsilon >0$, let us define
\begin{equation}
\label{equ:compact-subset-homo-space}
K_{\epsilon}:=\left\{\Lambda \in X: \Lambda \cap B(\vect{0}, \epsilon) = \{\vect{0}\}\right\}.
\end{equation}
 By Mahler's compactness criterion \cite{Mahler1946}, every $K_{\epsilon}$ is a compact subset of $X$ and every compact subset of $X$ is contained in some $K_{\epsilon}$.
\par For a weight $\vect{r}=(r_1, \dots, r_n)$, let us define the diagonal subgroup $A_{\vect{r}} \subset G$ as follows:
\[A_{\vect{r}} := \left\{ a_{\vect{r}}(t) := \begin{bmatrix}e^{r_1 t} & & & \\ & \ddots & & \\ & & e^{r_n t} & \\ & & & e^{-t}\end{bmatrix}: t \in \R \right\}.\]
\par For $\x \in \R^n$, let us denote 
\[V(\x) := \begin{bmatrix} \I_n & \x \\ & 1 \end{bmatrix}.\]
\begin{proposition}[{\cite[Theorem 1.5]{Kleinbock}}]
\label{prop:correspondence-diophantine-dynamics}
\par $\x \in \Bad(\vect{r})$ if and only if $\{a_{\vect{r}}(t)V(\x)\Z^{n+1}: t >0\}$ is bounded.
\end{proposition}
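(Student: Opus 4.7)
The plan is to invoke Mahler's compactness criterion, which says that $\{a_{\vect{r}}(t)V(\x)\Z^{n+1} : t > 0\}$ is bounded in $X$ if and only if there exists $\epsilon > 0$ such that
\[ \|a_{\vect{r}}(t) V(\x) \vect{p}\| \geq \epsilon \quad \text{for all } t > 0 \text{ and all nonzero } \vect{p} \in \Z^{n+1}. \]
A direct computation gives that the coordinates of $a_{\vect{r}}(t)V(\x)\vect{p}$ are $e^{r_i t}(q x_i + p_i)$ for $i = 1, \dots, n$ together with $e^{-t} q$, where $\vect{p} = (p_1, \ldots, p_n, q)$. The case $q=0$ is trivial, since then some $p_i \neq 0$ and the $i$-th coordinate has absolute value $e^{r_i t} |p_i| \geq 1$ because $r_i \geq 0$ and $t > 0$. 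So in both directions the content is in the case $q \neq 0$.

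For the forward implication, suppose $\x \in \Bad(\vect{r})$ with constant $c > 0$. Fix $(p_1, \dots, p_n, q)$ with $q \neq 0$ and let $i_0$ be the index maximizing $|q|^{r_i}|q x_i + p_i|$, so that this maximum is $\geq c$. The elementary inequality
\[ \max(A, B) \geq A^{1/(1+r_{i_0})} B^{r_{i_0}/(1+r_{i_0})} \]
applied with $A = e^{r_{i_0} t}|q x_{i_0} + p_{i_0}|$ and $B = e^{-t}|q|$ produces, after the exponentials cancel,
\[ \|a_{\vect{r}}(t) V(\x) \vect{p}\| \geq \bigl(|q|^{r_{i_0}} |q x_{i_0} + p_{i_0}|\bigr)^{1/(1+r_{i_0})} \geq \min(c, 1). \]
This uniform lower bound is independent of $t$ and $\vect{p}$, so by Mahler the orbit is bounded.

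For the reverse implication, I argue by contrapositive: assume $\x \notin \Bad(\vect{r})$ and produce points on the orbit of arbitrarily small systole. For every $c \in (0,1)$ there exists $(p_1, \ldots, p_n, q) \in \Z^{n+1}$ with $q \neq 0$ and $|q|^{r_i}|q x_i + p_i| < c$ for all $i$. Set $\rho := \max_i r_i > 0$ and choose $t$ by the equation $e^{-t} |q| = c^{1/(1+\rho)}$. For $c$ sufficiently small this forces $t > 0$ since $|q| \geq 1$. At this $t$ one has
\[ e^{r_i t} |q x_i + p_i| = (e^{-t}|q|)^{-r_i} \cdot |q|^{r_i}|q x_i + p_i| < c^{-r_i/(1+\rho)} \cdot c = c^{(1+\rho-r_i)/(1+\rho)}, \]
and the exponent is positive for every $i$. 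Taking the maximum with the last coordinate $e^{-t}|q| = c^{1/(1+\rho)}$ shows that $\|a_{\vect{r}}(t) V(\x) \vect{p}\|$ tends to $0$ as $c \to 0$. Hence no uniform lower bound can exist, and Mahler's criterion yields an unbounded orbit.

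There is essentially no obstacle here; the only subtlety is the choice of the scale $t$ in the reverse direction, which must simultaneously make $e^{-t}|q|$ small and each $e^{r_i t}|q x_i + p_i|$ small. The exponent $1/(1+\rho)$ is the balanced choice that makes the last coordinate comparable with the worst off-diagonal term, while the remaining coordinates are automatically smaller. The rational-$\x$ case poses no issue: a rational point fails to lie in $\Bad(\vect{r})$ and its orbit escapes to infinity through the vector $V(\x)\vect{p}_0 = (0,\dots,0,q_0)^\T$, consistent with the equivalence.
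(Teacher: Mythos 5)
Your proof is correct and takes essentially the same route as the paper's: compute the coordinates of $a_{\vect{r}}(t)V(\x)\vect{p}$ explicitly and apply Mahler's compactness criterion, with the forward direction producing a uniform lower bound on the shortest lattice vector and the reverse direction coming from a judicious choice of $t$. The weighted geometric-mean inequality $\max(A,B)\geq A^{1/(1+r_{i_0})}B^{r_{i_0}/(1+r_{i_0})}$ you use in place of the paper's case split on whether $e^t<|q|$, and the contrapositive framing of the converse in place of the paper's direct choice $e^{t_0}=2|q|/c$, are tidy but cosmetic variations on the same computation.
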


Therefore our main theorem is equivalent to saying that for any $C^n$ non-degenerate submanifold $\mathcal{U} \subset \R^n$ and any countable collection of one-parameter diagonal subgroups $\{A_{\vect{r}_s}: s \in \N\}$, the set of $\x \in \mathcal{U}$ such that 
\[\{a_{\vect{r}_s}(t)V(\x)\Z^{n+1}: t>0\}\] 
is bounded for all $s \in \N$ has full Hausdorff dimension.

\par The study of bounded trajectories under the action of diagonal subgroups in homogeneous spaces is a fundamental topic in homogeneous dynamics and has been active for decades. The basic set up of this type of problems is the following. Let $G$ be a Lie group and $\Gamma \subset G$ be a nonuniform lattice in $G$. Then $X = G/\Gamma$ is a noncompact homogeneous space. Let $A = \{ a(t) : t \in \R\}$ be a one-dimensional diagonalizable subgroup and let $\Bd(A)$ be the set of $x \in X$ such that $A^{+}x$ is bounded in $X$, where $A^+ := \{a(t): t >0\}$. Then one can ask whether $\Bd(A)$ has full Hausdorff dimension. For a submanifold $\mathcal{U} \subset X $, one can also ask whether $\Bd(A)\cap \mathcal{U}$ has Hausdorff dimension $\dim \mathcal{U}$.
\par  In 1986, Dani \cite{Dani-Bounded-Orbits} studies the case where $G$ is a semisimple Lie group with $\R$-rank one. In this case, he proves that for any non-quasi-unipotent one parameter subgroup $A \subset G$, $\Bd(A)$ has full Hausdorff dimension. His proof relies on Schmidt's game. In 1996, Kleinbock and Margulis \cite{Klein-Margulis-Bounded} study the case where $G$ is a semisimple Lie group and $\Gamma$ is a irreducible lattice in $G$. In this case, they prove that $\Bd(A)$ has full Hausdorff dimension for any non-quasi-unipotent subgroup $A$. Their proof is based on the mixing property of the action of $A$ on $X$. Recently, An, Guan and Kleinbock study the case where $G = \SL(3,\R)$ and $\Gamma = \SL(3,\Z)$. They prove that for any countable collection of diagonalizable one-parameter subgroups $\{F_s : s \in \N\}$, the intersection $\bigcap_{s =1 }^{\infty} \Bd(F_s)$ has full Hausdorff dimension. Their proof closely follows the argument in the work of An \cite{An} and uses a variantion of Schmidt's game.
\subsection{The linearization technique}
\label{subsec-main-tools-proof}
\par In \cite{Beresnevich}, the proof relies on the theory of geometry of numbers. In this paper, we study this problem through homogeneous dynamics and tackle the technical difficulties using the linearization technique. It turns out that in order to get full Hausdorff dimension, it is crucial to study distributions of long pieces of unipotent orbits in the homogeneous space $G/\Gamma$. To be specific, for a particular long piece $C$ of a unipotent orbit, we need to estimate the length of the part in $C$ staying outside a large compact subset $K$ of $G/\Gamma$. In homogeneous dynamics, the standard tool to study this type of problem is the linearization technique. The linearization technique is a standard and powerful technique in homogeneous dynamics. Using the linearization technique, we can transform a problem in dynamical systems to a problem on linear representations. Then we can study this problem using tools and results in representation theory. 
\par Let us briefly describe the technical difficulty when we apply the linearization technique. Let $\mathcal{V}$ be a finite dimensional linear representation of $\SL(n+1,\R)$ with a norm $\|\cdot\|$ and $\Gamma(\mathcal{V}) \subset \mathcal{V}$ be a fixed discrete subset of $\mathcal{V}$. Let $U = \{u(r): r \in \R\}$ be a one parameter unipotent subgroup of $G$. Given a large number $ T > 1$, we want to estimate the measure of $r \in [ - T, T]$ such that there exists $v \in \Gamma(\mathcal{V})$ such that $\|u(r)v\| \leq \epsilon$ where $\epsilon>0$ is a small number. By Dani-Margulis non-divergence theorem (see \cite{Dani-Margulis}), the measure is very small compared with $T$ given that for any such $v \in \Gamma(\mathcal{V})$ 
\[ \max\{ \|u(r)v\|: r \in [-T, T] \} \geq \rho \]
where $\rho >0$ is some fixed number. The difficulty is to handle the case where there exists some $v \in \Gamma(\mathcal{V})$, such that 
\[ \max\{ \|u(r)v\|: r \in [-T, T] \} < \rho. \] 
 Let us call such intervals $T$-bad intervals. In this paper, we will use representation theory to study properties of such $v$'s. We then use these properties to show that in a longer interval, say $[-T^2, T^2]$, the number of $T$-bad intervals is $\ll T^{1-\mu}$ for some constant $\mu >0$. This result is sufficient to prove Theorem \ref{thm:main-thm}.  
 \par In this paper, $\mathcal{V}$ is the canonical representation of $\SL(n+1, \R)$ on $\bigwedge^i \R^{n+1}$ and $\Gamma(\mathcal{V}) = \bigwedge^i \Z^{n+1} \setminus \{\vect{0}\}$ where $i = 1, \dots, n$.
\par The main technical results in this paper are proved in \S \ref{sec:count-dangerous-intervals}, \S \ref{subsec-dangerous-case} and \S \ref{subsec-extremely-dangerous-case}.
\par We refer the reader to \cite{Ratner}, \cite{Margulis-Tomanov}, \cite{Mozes_Shah}, \cite{Shah_1}, \cite{Shah_2} and \cite{Lindenstrauss-Margulis} for more applications of the linearization technique.

\subsection{The organization of the paper}
\label{subsec-organization-of-paper}
\par The paper is organized as follows:
\begin{itemize}
\item In \S \ref{sec-preliminaries}, we will recall some basic facts on Diophantine approximation, linear representations and lattices in $\R^{n+1}$.
\item In \S \ref{sec:cantor-like-construction}, we will recall a theorem on computing the Hausdorff dimension of Cantor like sets. We will also construct a Cantor-like covering of the set of weighted badly approximable points.
\item In \S \ref{sec:count-dangerous-intervals}, we will prove two technical results on counting lattice points. Proposition \ref{prop:count-dangerous-intervals} is one of the main technical contributions of this paper. Its proof relies on the linearization technique and $\SL(n+1, \R)$ representations.
\item In \S \ref{sec-proof-main-result}, we will give the proof of Proposition \ref{prop:distance-sequence-small}, which implies Theorem \ref{thm:main-task}, \ref{thm:main-thm} and \ref{thm:double-intersection}. We split the proof into three parts: {\bf the generic case}, {\bf the dangerous case} and {\bf the extremely dangerous case}. \S \ref{subsec-generic-case} handles {\bf the generic case}. The proof relies on the Dani-Margulis non-divergence theorem (Theorem \ref{thm:non-divergence-dani-margulis}). \S \ref{subsec-dangerous-case} handles {\bf the dangerous case}. The proof relies on Proposition \ref{prop:count-dangerous-intervals} proved in \S \ref{sec:count-dangerous-intervals} and the linearization technique. \S \ref{subsec-extremely-dangerous-case} handles {\bf the extremely dangerous case}. The proof relies on Proposition \ref{prop:count-extremely-dangerous-intervals} proved in \ref{sec:count-dangerous-intervals} and the linearization technique. Finally, we will explain how to adapt the proof to handle general $C^n$ non-degenerate manifolds.
\end{itemize}

\medskip
\par \noindent {\bf Acknowledgements.} The author would like to thank Elon Lindenstrauss and Barak Weiss for sharing many insightful ideas on this problem. He also thanks Shahar Mozes for helpful conversations on this problem. He appreciates their encouragements during the process of this work. He thanks Victor Beresnevich for inspiring discussion on this topic, especially for pointing out that the proof works for $C^n$ differentiable submanifolds. He also thanks Jinpeng An, Anish Ghosh, Erez Nesharim and Sanju Velani for their interests and helpful comments on an earlier version of this paper. Thanks are due to the anonymous referees for carefully reading the paper and giving many valuable suggestions that led to this revised version.


\section{Preliminaries}
\label{sec-preliminaries}
\subsection{Dual form of approximation}
\label{subsec-dual-form}
\par We first recall the following equivalent definition of $\Bad(\vect{r})$:
\begin{lemma}[see~{\cite[Lemma 1]{Beresnevich}}]
\label{lm:equivalent-definition-badly-approximable}
Let $\vect{r} = (r_1, \dots, r_n) \in \R^n$ be a weight and $\x \in \R^n$. The following statements are equivalent:
\begin{enumerate}
\item $\x \in \Bad(\vect{r})$.
\item There exists $c >0$ such that for any integer vector $(p_1, \dots, p_n ,q)$ such that $q \neq 0$, we have that 
\[ \max_{1 \leq i \leq n} |q|^{r_i}|q x_i + p_i| \geq c.\]
\item There exists $c >0$ such that for any $N \geq 1$, the only integer solution $(a_0, a_1, \dots, a_n)$ to the system
\[
	\begin{array}{rr}
	|a_0 + a_1 x_1 + \cdots + a_n x_n| < c N\inv , & |a_i| < N^{r_i} \text{ for all } 1 \leq i \leq n
	\end{array}
\]
is $a_0 = a_1 = \cdots = a_n =0$.
\end{enumerate}
\end{lemma}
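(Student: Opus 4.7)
Statements (1) and (2) are the verbatim content of Definition 1.3, so the real claim is the equivalence (2) $\iff$ (3), a weighted analog of Khintchine's transference principle. My plan is to translate both statements into boundedness of orbits in the space $X = G/\Gamma$ of unimodular lattices in $\R^{n+1}$, and then to conclude via Mahler's duality.

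Condition (2) has already been translated by Proposition 1.5: it is equivalent to the boundedness of $\mathcal{O}(\x) := \{a_{\vect{r}}(t) V(\x)\Gamma : t \geq 0\}$ in $X$. For (3), I would argue analogously in the Mahler-dual picture. Since $(a_{\vect{r}}(t) V(\x))^{-\T} = a_{\vect{r}}(-t) V(\x)^{-\T}$, the dual orbit is $\mathcal{O}^*(\x) := \{a_{\vect{r}}(-t) V(\x)^{-\T} \Gamma : t \geq 0\}$. Applying $a_{\vect{r}}(-t) V(\x)^{-\T}$ to an integer column $(a_1,\dots,a_n,a_0)^{\T}$ produces
\[
\big(e^{-r_1 t}a_1,\ \dots,\ e^{-r_n t}a_n,\ e^{t}\bigl(a_0-\textstyle\sum_{i=1}^{n} a_i x_i\bigr)\big)^{\T}.
\]
The substitution $N=e^t$ (together with the harmless sign change $a_0 \mapsto -a_0$) identifies the requirement that the sup-norm of this vector be uniformly bounded below, over $t \geq 0$ and nonzero integer $(a_1,\dots,a_n,a_0)$, with condition (3), up to absolute constants in $c$. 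By Mahler's compactness criterion, such a uniform lower bound is equivalent to the boundedness of $\mathcal{O}^*(\x)$, so (3) is equivalent to $\mathcal{O}^*(\x)$ being bounded.

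It then remains to show that $\mathcal{O}(\x)$ is bounded iff $\mathcal{O}^*(\x)$ is. This follows from Mahler's duality: the map $\Phi : g\Gamma \mapsto (g^{-1})^{\T}\Gamma$ is well-defined on $X$ because $\Gamma = \SL(n+1,\Z)$ is stable under the involution $\gamma \mapsto (\gamma^{-1})^{\T}$, and it is a homeomorphism of $X$ (being an involution continuous on $G$). By construction $\Phi(\mathcal{O}(\x)) = \mathcal{O}^*(\x)$, and homeomorphisms preserve bounded subsets, which chains the three equivalences together into (2) $\iff$ (3).

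The main obstacle is purely bookkeeping: aligning sign conventions in $V(\x)$ and $V(\x)^{-\T}$, tracking the substitution $N = e^t$, and relating the constant $c$ appearing in (3) to the uniform lower bound on the shortest nonzero vector in $\mathcal{O}^*(\x)$. The underlying geometry-of-numbers duality is classical and presents no genuine conceptual difficulty.
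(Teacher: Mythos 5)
The paper does not actually prove this lemma; it defers to \cite{Mahler}, \cite[Appendix]{BPV} and \cite[Appendix A]{Beresnevich}, where the equivalence (2) $\iff$ (3) is established by Mahler-type transference arguments in the language of linear forms and successive minima of polar reciprocal convex bodies. Your proof fills this in along dynamical lines, and it is correct: (2) is translated by Proposition \ref{prop:correspondence-diophantine-dynamics} into boundedness of $\{a_{\vect{r}}(t)V(\x)\Gamma : t\geq 0\}$, (3) is translated into boundedness of $\{a_{\vect{r}}(-t)V(\x)^{-\T}\Gamma : t \geq 0\}$ via Mahler's compactness criterion, and the two orbits are exchanged by the involutive homeomorphism $g\Gamma \mapsto g^{-\T}\Gamma$ of $X$ (well-defined since $\SL(n+1,\Z)$ is stable under transpose-inverse), which preserves relative compactness. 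This packages the classical transference into a single statement about a homeomorphism of $X$; it sacrifices the explicit constants that the classical proof yields, but the lemma only asserts existence of $c$, so this is harmless, and your argument meshes naturally with the homogeneous-dynamics point of view that the paper itself sets up immediately after the lemma (where it observes, without proof, that (3) is equivalent to $U(\x)\Z^{n+1}\in\Bd(D_{\vect{r}})$).

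One detail you compress should be spelled out: the phrase ``up to absolute constants in $c$'' understates the bookkeeping. In (3) the constant $c$ multiplies only the $N^{-1}$ bound, whereas Mahler's criterion gives a lower bound $\delta$ on all $n+1$ coordinates of $a_{\vect{r}}(-t)V(\x)^{-\T}\vect{a}$. The direction from (3) to the uniform $\delta$ is immediate (take $\delta = \min(c,1)$). In the other direction, given $\delta$, one must also rescale the time variable: replace $N$ by $N\cdot\delta^{-1/\tau(\vect{r})}$ where $\tau(\vect{r}) = \min\{r_i : r_i > 0\}$, which forces $c = \delta^{1 + 1/\tau(\vect{r})}$. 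This constant depends on the weight $\vect{r}$, not merely on $n$, but that is permitted by the statement. The argument is therefore complete once this rescaling is written out explicitly.
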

\begin{proof}
The reader is referred to \cite{Mahler}, \cite[Appendix]{BPV} and \cite[Appendix A]{Beresnevich} for the proof.
\end{proof}
\par Later in this paper we will use the third statement as the definition of $\Bad(\vect{r})$.
\par Given a weight $\vect{r}=(r_1, \dots, r_n)$, let us define 
\[ D_{\vect{r}} := \left\{ d_{\vect{r}}(t) := \begin{bmatrix}e^{t} & & & \\
 & e^{-r_1 t} & & \\
  & & \ddots & \\
  & & & e^{-r_n t} \end{bmatrix}: t \in \R \right\}.\]
 For $\x \in \R^n$, let us define 
 \[U(\x) := \begin{bmatrix} 1 & \x^{\T} \\ & \I_n \end{bmatrix}.\]

If we use the third statement in Lemma \ref{lm:equivalent-definition-badly-approximable} as the definition of $\Bad(\vect{r})$, then in view of \cite[Theorem 1.5]{Kleinbock} we have that $\x \in \Bad(\vect{r})$ if and only if $U(\x)\Z^{n+1} \in \Bd(D_{\vect{r}})$.


\subsection{The canonical representation}
\label{subsec-canonical-representation-sln}
\par Let $V = \R^{n+1}$. Let us consider the canonical representation of $G = \SL(n+1, \R)$ on $V$: $g \in G$ acts on $v \in V$ by left matrix multiplication. It induces a canonical representation of $G$ on $\bigwedge^i V$ for every $i=1,2,\dots, n$. For $g \in G$ and 
\[\vect{v} = \vect{v}_1 \wedge \cdots \wedge \vect{v}_i \in \bigwedge\nolimits^i V, \]
$g \vect{v} = (g \vect{v}_1) \wedge \cdots \wedge (g \vect{v}_i).$
\par For $i =1 , \dots, n$, let $\vect{e}_i \in \R^n$ denote the vector with $1$ in the $i$th component and $0$ in other components. 
\par Let us fix a basis for $V$ as follows. Let $\vect{w}_{+} := ( 1, 0, \dots, 0)$. For $i = 1, \dots, n$, let $\vect{w}_i := (0, \dots, 1, \dots, 0)$ with $1$ in the $i+1$st component and $0$ in other components. Then $\{\vect{w}_+, \vect{w}_1, \dots, \vect{w}_n\}$ is a basis for $V$. Let $W$ denote the subspace of $V$ spanned by $\{\vect{w}_1, \dots, \vect{w}_n\}$. For $j= 2, \dots, n$, let $W_j$ the subspace of $W$ spanned by $\{\vect{w}_j, \dots, \vect{w}_n\}$.
\par Let us define 
\begin{equation}
\label{equ:def-Z}
Z : = \left\{ z(\mathfrak{k}) := \begin{bmatrix}1 & \\ & \mathfrak{k}\end{bmatrix}: \mathfrak{k} \in \SO(n) \right\}.\end{equation}
Let us consider the canonical action of $\SO(n)$ on $\R^n$. For $\mathfrak{k} \in \SO(n)$ and $\vect{x} \in \R^n$, let us denote by $\mathfrak{k} \cdot \vect{x}$ the canonical action of $\mathfrak{k}$ on $\vect{x}$. It is straightforward to check that for $ \mathfrak{k} \in \SO(n)$ and $\x \in \R^n$, 
\[z(\mathfrak{k}) U(\x) z\inv(\mathfrak{k}) = U(\mathfrak{k}\cdot \x).\]
\par For any $\x \in \R^n$, let us define a subgroup $\SL(2, \x)$ of $G$ containing $U(\x)$ as follows. For $\x = \vect{e}_1$, let us define 
\[ \SL(2, \vect{e}_1) := \left\{ \begin{bmatrix} h & \\ & \I_{n-1} \end{bmatrix}: h \in \SL(2, \R) \right\}. \] 
For general $\x \in \R^n$, let us choose $\fk \in \SO(n)$ such that $\|\x\|_2\fk \cdot \vect{e}_1 = \x$ and define 
\[ \SL(2, \x) := z(\fk) \SL(2, \vect{e}_1) z\inv(\fk).  \]
It is easy to see that $\SL(2, \x)$ is isomorphic to $\SL(2, \R)$ and $U(\x) \in \SL(2, \x)$ corresponds to \[\begin{bmatrix}1 & \|\x\|_2 \\ & 1\end{bmatrix} \in \SL(2,\R).\] For $r >0$, let $\xi_{\vect{e}_1}(r) \in \SL(2, \vect{e}_1)$ denote the element
\[ \begin{bmatrix} r & 0 & \\ 0 & r\inv & \\ & & \I_{n-1} \end{bmatrix}\]
and $\xi_{\x} (r) \in \SL(2 , \x)$ denote $ z(\fk) \xi_{\vect{e}_1}(r) z\inv(\fk)$. Then $\xi_{\x}(r)$ corresponds to $\begin{bmatrix} r & \\ & r^{-1} \end{bmatrix}$ in $\SL(2, \R)$.
\par Let us study the action of $\SL(2, \x)$ on $V$.
\par Let us first consider the case $\x = \vect{e}_1$. For $r \in \R$, let us denote 
\[ u_1 (r) := U(r \vect{e}_1) ,\]
and
\[U_1 := \{ u_1 (r ): r \in \R \}.\]
Let us denote
\[\Xi_1 : = \{ \xi_1 (r) : = \diag \{r , r\inv,1, \dots,  1 \}: r > 0 \}.\]
It is easy to see that $\xi_1 (r) \vect{w}_+ = r \vect{w}_+$, $u_1 (r) \vect{w}_+ = \vect{w}_+$, $\xi_1 (r) \vect{w}_1 = r\inv \vect{w}_1$, $u_1(r)\vect{w}_1 = \vect{w}_1 + r \vect{w}_+$, and for any $\vect{w} \in W_2$, $\vect{w}$ is fixed by $\SL(2, \vect{e}_1)$.
\par For $\x \in \R^n$, we have $\x = \|\x\|_2 \mathfrak{k} \cdot \vect{e}_1$ for some $\mathfrak{k} \in \SO(n)$ and 
\[\SL(2,\x) = z(\mathfrak{k}) \SL(2, \vect{e}_1) z\inv(\mathfrak{k}). \] 
 In particular, we have that 
\[U(\x) = z(\mathfrak{k}) u_1(\|\x\|_2)z\inv(\mathfrak{k})\]
 and $\xi_{\x}(r) = z(\mathfrak{k}) \xi_1(r) z\inv(\mathfrak{k})$. Since $z(\mathfrak{k}) \vect{w}_+ = \vect{w}_+$ and $z(\mathfrak{k}) W = W$, we have that $\xi_{\x}(r) \vect{w}_+ = r \vect{w}_+$, $U(\x) \vect{w}_+ = \vect{w}_+$, $\xi_{\x}(r) z(\mathfrak{k}) \vect{w}_1 = r\inv \mathfrak{k} \cdot \vect{w}_1$, $U(\x) z(\mathfrak{k}) \vect{w}_1 = z(\mathfrak{k}) \vect{w}_1 + \|\x\|_2 \vect{w}_+$ and for any $\vect{w} \in z(\mathfrak{k})W_2$, $\vect{w}$ is fixed by $\SL(2, \x)$.
\par Let us consider the action of $\SL(2,\x)$ on $\bigwedge^i V$ for $i = 2,\dots, n$. Let us denote $\x = \|\x\|_2 \mathfrak{k} \cdot \vect{e}_1$ as above. For any $\vect{w} \in \bigwedge^{i-1} z(\mathfrak{k})W_2$, we have that 
\[\xi_{\x}(r) ((z(\mathfrak{k})\vect{w}_1)\wedge \vect{w}) = r\inv ((z(\mathfrak{k})\vect{w}_1)\wedge \vect{w}),\]
\[U(\x)((z(\mathfrak{k})\vect{w}_1)\wedge \vect{w}) =(z(\mathfrak{k})\vect{w}_1)\wedge \vect{w}  + \|\x\|_2 (\vect{w}_+ \wedge \vect{w}),\]
\[\xi_{\x}(r) (\vect{w}_+ \wedge \vect{w}) = r (\vect{w}_+ \wedge \vect{w})\]
  and 
  \[U(\x)(\vect{w}_+ \wedge \vect{w}) = \vect{w}_+ \wedge \vect{w}.\]
  For any $\vect{w} \in \bigwedge^{i} z(\mathfrak{k})W_2$ and any $\vect{w}' \in \bigwedge^{i-2} z(\mathfrak{k})W_2$, we have that $\vect{w}$ and $\vect{w}_+ \wedge (z(\mathfrak{k})\vect{w}_1) \wedge \vect{w}'$ are fixed by $\SL(2,\x)$.

 \subsection{Lattices in $\R^{n+1}$}
 \label{subsec-lattices-in-Rn+1}
 \par In this subsection let us recall some basic facts on lattices and sublattices in $\R^{n+1}$. 
 \par For a discrete subgroup $\Delta$ of $\R^{n+1}$, let $\Span_{\R}(\Delta)$ denote the $\R$-span of $\Delta$.
 \par Let $\Lambda \in X = G/\Gamma$ be a unimodular lattice in $\R^{n+1}$. For $i=1, \dots, n+1$, let $\cL_i( \Lambda)$ denote the collection of $i$-dimensional sublattices of $\Lambda$. Given $\Lambda' \in \cL_i( \Lambda)$, let us choose a basis $\{\vect{v}_1, \dots, \vect{v}_i\}$ of $\Lambda'$ and define 
 \begin{equation} 
 \label{equ:lattice-to-wedge}
 \cW(\Lambda') := \vect{v}_1 \wedge \cdots \wedge \vect{v}_i \in \bigwedge^i V.
 \end{equation}
 $\cW(\Lambda')$ is well defined modulo $\pm 1$. Thus $\cW$ defines a map from $\cL_i( \Lambda)$ to $\bigwedge^i V/\pm $ for each $i =1, \dots, n+1$. Let us denote $d(\Lambda') := \|\cW(\Lambda')\|$. We say that $\Lambda'$ is primitive relative to $\Lambda$ if $\cW(\Lambda')$ can not be written as $m \cW(\tilde{\Lambda})$ where $|m| >1$ is an integer and $\tilde{\Lambda} \in \cL_i(\Lambda)$ (see \cite{Cassels}).
\par For $ j = 1, \dots, i$, let 
\[\lambda_j(\Lambda') := \inf\{r\geq 0: B(\vect{0}, r) \text{ contains at least } j \text{ linearly independent vectors of } \Lambda'\}.\]
By the Minkowski Theorem (see \cite{Cassels}), we have the following:
\begin{equation}
\label{equ:minkowski-thm}
 \lambda_1(\Lambda')\cdots \lambda_i(\Lambda') \asymp d(\Lambda').
\end{equation}
Moreover, there exists a basis (called Minkowski reduced basis) of $\Lambda'$, $\{\vect{v}_j: j = 1, \dots, i\}$, such that $\|\vect{v}_j\| \asymp \lambda_j(\Lambda')$ for every $j = 1, \dots, i$.
\par For $\rho >0$ and $i =1, \dots, n+1$, let $\mathcal{C}_i(\Lambda, \rho)$ denote the collection of $i$-dimensional primitive sublattices $\Lambda'$ of $\Lambda$ with $d(\Lambda') < \rho$. We will need the following result on counting sublattices: 
\begin{proposition}
\label{prop:counting-sublattices}
\par There exists a constant $N >1$ such that the following statement holds. For any $0 <\epsilon < 1 $ and any $i= 1, \dots, n$, let $\Lambda \in K_{\epsilon}$ where $K_{\epsilon}$ is defined in \eqref{equ:compact-subset-homo-space}.  Then we have that
\[\sharp\mathcal{C}_i(\Lambda, 1) \leq \epsilon^{-N} .\]
\end{proposition}
\begin{proof}
\par First note that there exists a constant $N_1 >1$ such that for any $i=1,\dots, n$ and $\rho >0$, 
\[ \sharp\mathcal{C}_i(\Z^{n+1}, \rho) \leq \rho^{N_1}.\]
\par We also note that there exists a constant $N_2 >1$ such that for any $\Lambda \in K_{\epsilon}$, there exists $g \in \SL(n+1, \R)$ with $\|g\inv\| < \epsilon^{-N_2}$ such that $\Lambda = g \Z^{n+1}$. In fact, the fact is easily seen if $g$ is chosen in a Siegel set (see \cite[Proposition 10.56]{EW2017}). Let us fix $\rho > \epsilon$ and $i = 1, \dots, n$. Then for any $\Lambda' \in \mathcal{C}_i(\Lambda, 1)$, then we have that $g\inv \Lambda' \subset \Z^{n+1}$ and 
\[d(g^{-1} \Lambda') \leq \|g\inv\|^i d(\Lambda') \leq \epsilon^{-(n+1) N_2} .\]
Therefore, we have that 
\[ \sharp\mathcal{C}_i(\Lambda, 1) \leq \sharp\mathcal{C}_i(\Z^{n+1}, \epsilon^{-(n+1) N_2}) \leq \epsilon^{-N} \]
where $N = N_1 N_2 (n+1)$. 
\par This completes the proof.
\end{proof}

\section{A Cantor like construction}
\label{sec:cantor-like-construction}
\par In this section, we will introduce a Cantor like construction which will help us to compute Hausdorff dimension.
\par Since we focus on the case of curves, we may assume that $\mathcal{U}$ is given by 
\[ \bphi = (\varphi_1 , \dots, \varphi_n): [0,1] \to \R^n \]
 where every $\varphi_i(s)$ is a $C^n$ differentiable function. 

\begin{definition}[See~{\cite[\S 5]{Beresnevich}}]
\label{def:cantor-like-construction}
\par For an integer $R >0$ and a closed interval $J \subset [0,1]$, let us denote by $\Par_{R}(J)$ the collection of closed intervals obtained by dividing $J$ into $R$ closed intervals of the same size. For a collection $\mathcal{I}$ of closed intervals, let us denote
\[\Par_{R}(\mathcal{I}) := \bigcup_{I \in \mathcal{I}} \Par_R(I).\] 
A sequence $\{\mathcal{I}_q\}_{q \in \N}$ of collections of closed intevals is called a $R$-sequence if for every $q \geq 1$, $\mathcal{I}_q \subset \Par_R(\mathcal{I}_{q-1})$. For a $R$-sequence $\{\mathcal{I}_q\}_{q \in \N}$ and $q \geq 1$, let us define $\hat{\mathcal{I}}_q := \Par_R(\mathcal{I}_{q-1})\setminus \mathcal{I}_q$ and 
\[\mathcal{K}(\{\mathcal{I}_q: q \in \N\}) := \bigcap_{q \in \N} \bigcup_{I_q \in \mathcal{I}_q} I_q .\]
Then every $R$-sequence $\{\mathcal{I}_q\}_{q \in \N}$ gives a Cantor like subset $\mathcal{K}(\{\mathcal{I}_q\}_{q \in \N})$ of $[0,1]$.
\par For $q \geq 1$ and a partition $\{\hat{\mathcal{I}}_{q, p}\}_{0 \leq p \leq q-1}$ of $\hat{\mathcal{I}}_q$, let us define 
\[d_q(\{\hat{\mathcal{I}}_{q, p}\}_{0\leq p \leq q-1}):= \sum_{p=0}^{q-1} \left( \frac{4}{R}\right)^{q-p} \max_{I_p \in \mathcal{I}_p} F( \hat{\mathcal{I}}_{q,p}, I_p), \]
where $F( \hat{\mathcal{I}}_{q,p}, I_p) := \sharp\{I_q \in \hat{\mathcal{I}}_{q,p}, I_q \in I_p\}$. Let us define 
\[d_q(\mathcal{I}_q) := \min_{\{\hat{\mathcal{I}}_{q, p}\}_{0\leq p \leq q-1}} d_q(\{\hat{\mathcal{I}}_{q, p}\}_{0\leq p \leq q-1}),\]
where $\{\hat{\mathcal{I}}_{q, p}\}_{0\leq p \leq q-1}$ runs over all possible partitions of $\hat{\mathcal{I}}_q$. Let us define 
\[d(\{\mathcal{I}_q\}_{q \in \N}) := \max_{q \in \N} d_q(\mathcal{I}_q).\]
\end{definition}

\begin{definition}[See~{\cite[\S 5]{Beresnevich}}]
\label{def:m-rich}
For $R >1$ and a compact subset $X \subset [0,1]$, we say that $X$ is $R$-Cantor rich if for any $\epsilon >0$, there exists a $R$-sequence $\{\mathcal{I}_q\}_{q \in \N}$ such that 
\[\mathcal{K}(\{\mathcal{I}_q\}_{q \in \N}) \subset X\]
and $d(\{\mathcal{I}_q\}_{q \in \N}) \leq \epsilon$.
\end{definition}
\par Our proof relies on the following two theorems:
\begin{theorem}[See~{\cite[Theorem 6]{Beresnevich}}]
\label{thm:hausdorff-dimension-m-rich}
Any $R$-Cantor rich set $X$ has full Hausdorff dimension.
\end{theorem}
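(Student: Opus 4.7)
The plan is to prove $\dim_H X \geq s$ for every $s \in (0,1)$ via the mass distribution principle (Frostman's lemma), and then let $s \to 1^-$ to conclude $\dim_H X = 1$.

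Fix $s \in (0,1)$, choose $R \geq M$ large enough that $R^{1-s}$ dominates an absolute constant, and let $\epsilon = \epsilon(R, s) > 0$ be small, to be determined. The $M$-Cantor richness of $X$ produces an $R$-sequence $\{\mathcal{I}_q\}_{q \in \mathbb{N}}$ with $\mathcal{K}(\{\mathcal{I}_q\}) \subset X$ and $d(\{\mathcal{I}_q\}) \leq \epsilon$. Construct a probability measure $\mu$ supported on $\mathcal{K}(\{\mathcal{I}_q\})$ by starting from the uniform probability distribution on the seed level $\mathcal{I}_0$ and recursively spreading mass uniformly among surviving children, i.e., $\mu(I_q) := \mu(I_{q-1})/N(I_{q-1})$, where $I_{q-1}$ is the parent of $I_q$ and $N(I_{q-1}) := |\{I \in \mathcal{I}_q : I \subset I_{q-1}\}|$.

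To apply the mass distribution principle and conclude $\dim_H \mathcal{K}(\{\mathcal{I}_q\}) \geq s$, it is enough to prove the uniform Frostman bound $\mu(I_q) \leq C R^{-qs}$ for all $q$ and $I_q \in \mathcal{I}_q$: any ball of radius $r \asymp R^{-q}|J|$ meets only a bounded number of level-$(q-1)$ intervals, whence $\mu(B(x,r)) \lesssim r^s$ follows. Iterating the recursion gives $\mu(I_q) = \prod_{k=1}^q (R - F_k(A_{k-1}))^{-1}$ along the ancestor chain $(A_{k-1})$ of $I_q$, where $F_k(A_{k-1})$ counts the removed children of $A_{k-1}$ at step $k$. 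Using $\log(1-x) \geq -2x$ for small $x$, the Frostman bound reduces to the summatory inequality $\sum_{k=1}^q F_k(A_{k-1}) \lesssim q R (1-s) \log R$ along every ancestor chain.

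The main obstacle is establishing this summatory estimate. Via the optimal partition of $\hat{\mathcal{I}}_k$ guaranteed by $d_k(\mathcal{I}_k) \leq \epsilon$, we decompose $F_k(A_{k-1}) = \sum_{p=0}^{k-1} |\hat{\mathcal{I}}_{k,p} \cap \mathrm{children}(A_{k-1})|$, and each term is bounded pointwise by $\min\{R, \epsilon (R/4)^{k-p}\}$. The key point is that the weighting $(4/R)^{k-p}$ in the definition of $d$ is engineered so that, after switching the order of summation $\sum_k \sum_p = \sum_p \sum_{k>p}$ and exploiting the geometric decay together with the trivial bound $F_k(A_{k-1}) \leq R$ and the option to optimize the partition independently at each level, one obtains $\sum_k F_k(A_{k-1}) \leq C_0 \epsilon R q$ for an absolute constant $C_0$; taking $\epsilon$ small enough that $C_0 \epsilon \leq (1-s)(\log R)/2$ then yields the Frostman bound. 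The delicate combinatorial step of extracting the $\epsilon$-damping globally across levels (rather than summing the $(R/4)^{k-p}$ pointwise bounds naively, which would give a vacuous estimate) is precisely where the weighted structure in the definition of $d$ pays off, and it is the heart of the argument.
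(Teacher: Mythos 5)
Your overall strategy---build a probability measure on the Cantor set satisfying a Frostman bound and invoke the mass distribution principle---is the right one (the paper simply cites Beresnevich for this theorem, and his proof does go through a Frostman-type measure). However, the specific construction via uniform mass-splitting $\mu(I_q) = \mu(I_{q-1})/N(I_{q-1})$ does not work, and the summatory estimate you assert, $\sum_{k\le q} F_k(A_{k-1}) \le C_0\,\epsilon R\,q$ along every ancestor chain, is false.

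Here is a concrete counterexample. Fix a single infinite descending chain $A_0 \supset A_1 \supset A_2 \supset \cdots$ in the full $R$-ary tree. Choose a threshold $q_0$ with $(4/R)^{q_0}(R-1) \le \epsilon$. For $q < q_0$ remove nothing. For $q\ge q_0$, let $\hat{\mathcal I}_q$ consist of exactly the $R-1$ children of $A_{q-1}$ other than $A_q$, and nothing else. Assigning all of $\hat{\mathcal I}_q$ to level $p=0$ gives
\[
d_q(\mathcal I_q) \le \left(\frac 4 R\right)^{q}(R-1)\le\epsilon,
\]
so this $R$-sequence satisfies $d(\{\mathcal I_q\})\le\epsilon$. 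Yet along the chain $(A_k)$ one has $F_k(A_{k-1})=R-1$ and $N(A_{k-1})=1$ for every $k\ge q_0$, so the uniform measure has an atom, $\mu(A_q)=R^{-q_0}$ for all $q\ge q_0$, and the Frostman bound $\mu(I_q)\lesssim R^{-qs}$ fails. The claimed inequality $\sum_{k\le q}F_k(A_{k-1})\le C_0\epsilon Rq$ is also violated, since the left-hand side is $\approx (R-1)q$.

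Two specific steps break. First, the linearization $\log(1 - F_k/R)\ge -2F_k/R$ requires $F_k/R\le 1/2$, and nothing in the hypothesis rules out $F_k = R-1$. Second, your pointwise bound $F_{k,p}(A_{k-1})\le\min\{R,\epsilon(R/4)^{k-p}\}$, after switching the summation order, gives for each fixed $p$ a tail $\sum_{k\ge p+c_\epsilon} R$ that grows linearly in $q$, so the double sum is of order $Rq^2$, not $\epsilon Rq$; there is no mechanism to ``extract the $\epsilon$-damping globally.'' What is actually true, and what Beresnevich (following Badziahin--Velani) proves, is that while individual chains can be nearly killed, the condition $d\le\epsilon$ forces such chains to be rare enough to be \emph{pruned}: one recursively extracts a sub-$R$-sequence $\{\mathcal J_q\}\subset\{\mathcal I_q\}$ in which every surviving interval retains at least $R-R^{1-\delta}$ children, after which the uniform measure on the pruned tree trivially satisfies the Frostman bound with exponent $s=\log(R-R^{1-\delta})/\log R\to1$. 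The exponential weights $(4/R)^{q-p}$ are used precisely to control how much mass is lost during pruning, and that pruning step is absent from your argument; by the counterexample above, it cannot be avoided.
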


\begin{theorem}[See~{\cite[Theorem 7]{Beresnevich}}]
\label{thm:intersection-m-rich}
Any countable intersection of $R$-Cantor rich sets in $[0,1]$ is $R$-Cantor rich.
\end{theorem}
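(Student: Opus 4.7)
\par The plan is to build a single $R$-sequence for the intersection by taking the intersection of $R$-sequences supplied by each factor $X_i$, and to control the resulting $d$-value by an additive decomposition that mirrors the partition into contributions from each $X_i$. Fix $\epsilon > 0$ and an integer $R \geq M$, and choose positive numbers $\epsilon_i$ with $\sum_{i \geq 1} \epsilon_i \leq \epsilon$. For each $i$, apply the $M$-Cantor rich property of $X_i$ to obtain an $R$-sequence $\{\mathcal{J}_q^{(i)}\}_{q \geq 0}$ starting from the common $\mathcal{J}_0^{(i)} = \{I_0\}$ with $\mathcal{K}(\{\mathcal{J}_q^{(i)}\}) \subset X_i$ and $d(\{\mathcal{J}_q^{(i)}\}) \leq \epsilon_i$.

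\par Define the combined sequence by $\mathcal{I}_q := \bigcap_{i \geq 1} \mathcal{J}_q^{(i)}$. If $J \in \mathcal{I}_q$, then for every $i$ we have $J \in \mathcal{J}_q^{(i)} \subset \Par_R(\mathcal{J}_{q-1}^{(i)})$, so the parent of $J$ lies in $\bigcap_i \mathcal{J}_{q-1}^{(i)} = \mathcal{I}_{q-1}$; hence $\mathcal{I}_q \subset \Par_R(\mathcal{I}_{q-1})$, i.e.\ $\{\mathcal{I}_q\}$ is an $R$-sequence, and tautologically $\mathcal{K}(\{\mathcal{I}_q\}) \subset \bigcap_{i} \mathcal{K}(\{\mathcal{J}_q^{(i)}\}) \subset \bigcap_i X_i$.

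\par To bound $d(\{\mathcal{I}_q\})$, fix $q \geq 1$ and assign each $I_q \in \hat{\mathcal{I}}_q = \Par_R(\mathcal{I}_{q-1}) \setminus \mathcal{I}_q$ to the smallest index $i(I_q)$ for which $I_q \notin \mathcal{J}_q^{(i)}$; since $\Par_R(\mathcal{I}_{q-1}) \subset \Par_R(\mathcal{J}_{q-1}^{(i)})$, such $I_q$ automatically lies in $\hat{\mathcal{J}}_q^{(i(I_q))}$. For each $i$ choose an optimal level partition $\{\hat{\mathcal{J}}_{q,p}^{(i)}\}_{0 \leq p \leq q-1}$ realising $d_q(\mathcal{J}_q^{(i)}) \leq \epsilon_i$, and declare $I_q$ to lie in level $p$ iff $I_q \in \hat{\mathcal{J}}_{q,p}^{(i(I_q))}$. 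Since $\mathcal{I}_p \subset \mathcal{J}_p^{(i)}$, for every $I_p \in \mathcal{I}_p$,
\[
F(\hat{\mathcal{I}}_{q,p}, I_p) \leq \sum_{i \geq 1} F(\hat{\mathcal{J}}_{q,p}^{(i)}, I_p) \leq \sum_{i \geq 1} \max_{J \in \mathcal{J}_p^{(i)}} F(\hat{\mathcal{J}}_{q,p}^{(i)}, J).
\]
Multiplying by $(4/R)^{q-p}$, summing over $p$, and interchanging the two summations,
\[
d_q(\{\mathcal{I}_q\}) \leq \sum_{i \geq 1} d(\{\mathcal{J}_q^{(i)}\}) \leq \sum_{i \geq 1} \epsilon_i \leq \epsilon,
\]
and taking $\max_q$ yields $d(\{\mathcal{I}_q\}) \leq \epsilon$, as required.

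\par The most delicate point is ensuring that each $\mathcal{I}_q$ is nonempty, so that $\mathcal{K}(\{\mathcal{I}_q\})$ is a genuine nonempty Cantor-like set. This can be arranged by taking the $\epsilon_i$ small enough that the total proportion of intervals removed by the $\mathcal{J}_q^{(i)}$ combined remains strictly below one at every level, which follows from unfolding the definition of $d$ into a bound on $|\hat{\mathcal{J}}_q^{(i)}| / R^q$ and summing over $i$. A secondary subtlety, already addressed above, is that the global level assignment of each removed interval must be inherited from the optimal partition of the $X_i$ to which it is attributed, so that the additive decomposition of $d_q$ goes through with the same $(4/R)^{q-p}$ weights.
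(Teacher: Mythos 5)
The paper does not give its own proof of this theorem; it simply quotes it as \cite[Theorem 7]{Beresnevich}. So there is no in-paper argument to compare against. On its own terms your proof is correct and is the natural one: intersect the supplied $R$-sequences (noting that the containment $\mathcal{I}_q \subset \Par_R(\mathcal{I}_{q-1})$ is inherited because the parent of a common interval is common), attribute each removed interval to the first factor that removes it, pull back that factor's optimal level partition, and use the resulting additive decomposition $F(\hat{\mathcal{I}}_{q,p},I_p) \le \sum_i \max_{J\in\mathcal{J}_p^{(i)}} F(\hat{\mathcal{J}}^{(i)}_{q,p},J)$ together with $\sum_i \epsilon_i \le \epsilon$. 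This is exactly the kind of bookkeeping the definition of $d$ was engineered to support, and it is almost certainly the argument Beresnevich gives.

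Two small remarks. First, for the intersection of the $\mathcal{J}_q^{(i)}$ to be meaningful you implicitly use that every $\mathcal{J}_0^{(i)}$ is the same singleton $\{I_0\}$; this is part of what ``$M$-Cantor rich in $I_0$'' means and is worth stating as an assumption rather than a choice. Second, your closing paragraph on nonemptiness is unnecessary and the heuristic you offer (``total proportion removed stays below one'') does not directly match how $d$ is defined, since $d$ weights removals by $(4/R)^{q-p}$ rather than by measure. The definition of $M$-Cantor rich only asks for an $R$-sequence with $\mathcal{K}(\{\mathcal{I}_q\}) \subset X$ and $d \le \epsilon$, so once you have $d(\{\mathcal{I}_q\}) \le \epsilon$ you are done; any nondegeneracy of $\mathcal{K}$ is what Theorem~\ref{thm:hausdorff-dimension-m-rich} extracts from the smallness of $d$, and you should not try to reprove it here.
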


\par To show Theorem \ref{thm:main-thm} and \ref{thm:double-intersection}, it suffices to find a constant $R >1$ and show that for any weight $\vect{r}$, $\bphi\inv(\Bad(\vect{r})\cap \bphi([0,1]))$ is $R$-Cantor rich. We will determine $R>1$ later.
\begin{theorem}
\label{thm:main-task}
There exists a constant $R > 1$ such that for any weight $\vect{r}$, $\bphi\inv(\Bad(\vect{r})\cap \bphi([0,1]))$ is $R$-Cantor rich.
\end{theorem}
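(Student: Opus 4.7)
The plan is to reformulate the Cantor-rich property dynamically and then construct an explicit $R$-sequence whose intervals correspond to orbits that stay in a fixed compact set of $X$. By the dual form recalled in \S\ref{subsec-dual-form}, $\x\in\Bad(\vect{r})$ iff $U(\x)\Z^{n+1}\in\Bd(D_{\vect{r}})$. Fix $\varepsilon>0$ and an integer $R\geq M$ (with $M$ to be determined), and choose a small $\epsilon_0>0$; set $T_q:=q\log R$. I will build the $R$-sequence inductively with $\mathcal{I}_0=\{I\}$ and $\mathcal{I}_q\subset\Par_R(\mathcal{I}_{q-1})$ by requiring that every $J\in\mathcal{I}_q$ satisfy $d_{\vect{r}}(t)U(\varphi(s))\Z^{n+1}\in K_{\epsilon_0}$ for all $s\in J$ and $t\in[0,T_q]$. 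Then $\mathcal{K}(\{\mathcal{I}_q\})\subset\varphi\inv(\Bad(\vect{r})\cap\varphi(I))$ automatically, and the content of the theorem is the estimate $d(\{\mathcal{I}_q\})\leq\varepsilon$.

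To identify the removed subintervals, I would invoke the Kleinbock--Margulis non-divergence theorem applied to each of the canonical representations of $G$ on $\bigwedge^i\R^{n+1}$. This reduces a removal to the existence of $s\in J$, $t\in[T_{q-1},T_q]$, some $i\in\{1,\dots,n\}$, and a primitive sublattice $\Lambda_i$ of $\Z^{n+1}$ with
\[
\bigl\|d_{\vect{r}}(t)U(\varphi(s))(\vect{v}_1\wedge\cdots\wedge\vect{v}_i)\bigr\|<\rho,
\]
for a Minkowski reduced basis $\{\vect{v}_j\}$ of $\Lambda_i$ and a small fixed $\rho>0$. Proposition \ref{prop:counting-sublattices} bounds the number of competing sublattices at each base point by $O(\epsilon_0^{-N})$, so the problem reduces to bounding, for each fixed $v\in\bigwedge^i\Z^{n+1}$, the set of $s$ on which $F_v(s,t):=\|d_{\vect{r}}(t)U(\varphi(s))v\|$ drops below $\rho$ within the $q$-window of $t$.

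The main obstacle is this counting step, to be carried out by the linearization technique together with the $\SL(2,\vect{x})$-structure of \S\ref{subsec-canonical-representation-sln}. Writing $\varphi(s)=\|\varphi(s)\|_2\,\mathfrak{k}(s)\cdot\vect{e}_1$ and decomposing $v$ into its highest-weight, lowest-weight and fixed components under the corresponding copy of $\SL(2,\R)$, the analytic non-degeneracy of $\varphi$ should force the coefficients of the resulting polynomial expansion of $F_v(s,t)^2$ in $s$ to admit uniform lower bounds in terms of $\|v\|$. This is the substantive content of \S\ref{sec:count-dangerous-intervals}: after replacing $v$ by a suitable element of its orbit one expects a polynomial whose sublevel set can only cover $\ll R^{(q-p)(1-\mu)}$ of the $R^{q-p}$ subintervals of length $R^{-q}|I|$ contained in a level-$p$ parent, for some $\mu=\mu(n)>0$ independent of $\vect{r}$ and $v$. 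This is precisely the type of estimate alluded to in \S\ref{subsec-main-tools-proof} for $T$-bad intervals inside a window of size $T^2$.

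Apportioning each removed interval to the level $p$ at which the responsible dangerous vector $v$ first appeared, and combining this with the sublattice count, yields $F(\hat{\mathcal{I}}_{q,p},I_p)\leq C\,R^{(q-p)(1-\mu)}$. Substituting into Definition \ref{def:cantor-like-construction} gives
\[
d_q(\mathcal{I}_q)\leq \sum_{p=0}^{q-1}\Bigl(\tfrac{4}{R}\Bigr)^{q-p}C\,R^{(q-p)(1-\mu)}\leq C\sum_{k=1}^{\infty}\bigl(4R^{-\mu}\bigr)^k,
\]
which is dominated by $\varepsilon$ as soon as $M$ is chosen large enough that $4R^{-\mu}<1/2$ and the resulting geometric series is smaller than $\varepsilon/C$. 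Since this bound is uniform in $q$, one gets $d(\{\mathcal{I}_q\})\leq\varepsilon$, which is the Cantor-rich condition. The genuinely hard step is the polynomial sublevel estimate underlying the counting bound, where the non-degeneracy of $\varphi$ must be combined delicately with the $\SL(2,\R)$-representation structure to rule out the possibility that a single dangerous vector generates too many removed subintervals.
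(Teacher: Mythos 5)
Your overall skeleton is close to the paper's: build the Cantor cover by sieving subintervals whose $g_{\vect{r}}(q)U(\varphi(\cdot))$-orbit leaves a compact set, use Kleinbock--Margulis non-divergence together with Proposition~\ref{prop:counting-sublattices} to control the removed portion, and apportion each removed subinterval to an ancestor level according to the responsible sublattice. However, there is a genuine gap in your central claim that a \emph{single} counting argument yields $F(\hat{\mathcal{I}}_{q,p}, I_p)\ll R^{(q-p)(1-\mu)}$ uniformly in $p$.

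The $\SL(2,\R)$-representation counting you describe — and which the paper carries out in Proposition~\ref{prop:count-dangerous-intervals} and Lemma~\ref{lm:key-lemma} — only functions when the window on which the responsible multivector $\vect{v}$ stays $\rho$-small has length $\leq R^{-q(1-\eta')}$, i.e.\ when $p$ is within $O(\eta' q)$ of $q$. The key exponent $\lambda = n'r_1 + \sum_{i>n'} r_i$ is only close to $1$ because the definition of $n'$ forces $r_1 - r_j \ll l/q$, and this is only useful when $l/q\leq\eta'$ is tiny; for $l$ comparable to $q$ the bound degenerates to something like $R^{nl}$ and carries no information. When a single $\vect{v}$ stays small on a window of length $R^{-q(1-\eta')}$ or longer (the paper's ``extremely dangerous'' case, assigned to $p=0$), no per-vector sublevel-set estimate can help: the window itself may contain essentially all the length-$R^{-q}$ subintervals of the $p$-ancestor. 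The paper closes this with the Bernik--Kleinbock--Margulis theorem (Theorem~\ref{thm:bernik-kleinbock-margulis}), a deep external input bounding the measure of $\{s: |f(s)|,|f'(s)| \text{ both small for some integer } f\}$ with the specific weighted scaling, plus the dichotomy of Lemma~\ref{lm:key-lemma-extremely-dangerous} and a dimension-reduction induction in Lemma~\ref{lm:key-lemma-extremely-dangerous-2nd-case} to treat $\bigwedge^i$ with $i\geq 2$. Your proposal does not mention any of this and would not survive without it; the non-degeneracy of $\varphi$ alone does not give a uniform-in-$v$ lower bound strong enough to replace it.

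A second, smaller problem is a quantifier mismatch. $M$-Cantor rich requires that for \emph{every} $R\geq M$ and \emph{every} $\varepsilon>0$ one can achieve $d(\{\mathcal{I}_q\})\leq\varepsilon$. Your final bound $d_q\leq C\sum_k(4R^{-\mu})^k$ can only be forced below $\varepsilon$ by increasing $R$, which is not allowed once $R$ is fixed. The paper resolves this by the inner tuning parameter $m$ (via $\kappa=R^{-m}$): each term in the $d_q$ sum is shown to tend to $0$ as $m\to\infty$ with $R$ fixed. You introduce $\epsilon_0$ but your final estimate no longer depends on it, so there is nothing to send to $0$ for fixed $R$.
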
 
\par Our main task is to prove Theorem \ref{thm:main-task}.
\par Let us fix $R$. We will show that for any $\epsilon >0$, we can construct a $R$-sequence $\{\mathcal{I}_q\}_{q \in \N}$ such that $\mathcal{K}(\{\mathcal{I}_q\}_{q \in \N}) \subset \bphi\inv(\Bad(\vect{r}))$ and $d(\{\mathcal{I}_q\}_{q \in \N}) < \epsilon$. 
\begin{StandingAssumption}
\label{standing-assumptions}
\par Let us make some assumptions to simplify the proof.
\begin{enumerate}[label=\textbf{A.\arabic*}]
\item \label{assumption-1} Without loss of generality, we may assume that $r_1 \geq r_2 \geq \cdots \geq r_n$. We may also assume that $r_n >0$. By \cite{Beresnevich}, if $r_n = 0$, we can reduce the problem to the $n-1$ dimensional case.
\item \label{assumption-2} Since 
\[\bphi =(\varphi_1, \dots, \varphi_n): [0,1] \to \R^{n}\]
 is $C^n$ differentiable and non-degenerate, we may assume that for any $s \in [0,1]$ and any $i = 1 ,\dots, n$, $\varphi'_i(s) \neq 0$. If this is not the case, we can replace $[0,1]$ with a smaller closed interval $I \subset [0,1]$, cf. \cite[Property F]{Beresnevich}. Then since $[0,1]$ is closed, there exist constants $C_1 > c_1 >0$ such that for any $s \in [0,1]$ and any $i=1,\dots, n$, $c_1 \leq |\varphi'_i(s)| \leq C_1$. 
\end{enumerate}
 \end{StandingAssumption}
\par Let us fix some notation. Let $ \kappa >0$ be a small parameter which we will determine later. Let $b >0$ be such that $b^{1+r_1}=R$. For $t >0$, let us denote 
\[ g_{\vect{r}} (t) := \begin{bmatrix} b^{t} & & & \\ & b^{-r_1 t} & & \\ & & \ddots & \\ & & & b^{-r_n t} \end{bmatrix}.\] 
For $i =1 , \dots, n$, let $\lambda_i = \frac{1+r_i}{1+r_1}$. Then we have that $1 = \lambda_1 \geq \lambda_2 \geq \cdots \geq \lambda_n$. Let $m(\cdot)$ denote the Lebesgue measure on $[0,1]$. 

\par Let us give the $R$-sequence as follows. Let $\mathcal{I}_0 = \{ [0,1] \}$. Suppose that we have defined $\mathcal{I}_{q-1}$ for $q \geq 1$ and every $I_{q-1} \in \mathcal{I}_{q-1}$ is a closed interval of size $ R^{-q+1}$. Let us define $\mathcal{I}_{q}\subset \Par_R(\mathcal{I}_{q-1})$ as follows. For any $I_{q} \in \Par_R(\mathcal{I}_q)$, $I_{q} \in \hat{\mathcal{I}}_{q}$ if and only if there exists $s \in I_{q}$ such that $g_{\vect{r}} (q) U(\bphi(s)) \Z^{n+1} \notin K_{\kappa}$. That is to say, there exists $\vect{a} \in \Z^{n+1}\setminus \{\vect{0}\}$ such that $\|g_{\vect{r}} (q) U(\bphi(s))\vect{a}\| \leq \kappa$. Let us define $\mathcal{I}_q = \Par_R(\mathcal{I}_{q-1})\setminus \hat{\mathcal{I}}_q$. This finishes the construction of $\{\mathcal{I}_q\}_{q\in \N}$. It is easy to see that 
\[ \mathcal{K}(\{\mathcal{I}_q\}_{q \in \N}) \subset \bphi\inv(\Bad(\vect{r})). \]
\par We need to prove the following:
\begin{proposition}
\label{prop:distance-sequence-small}
For any $\epsilon >0$, there exists $\kappa >0$ such that the $R$-sequence $\{\mathcal{I}_q\}_{q \in \N}$ constructed as above with $\kappa $ satisfies that
\begin{equation}
\label{equ:distance-sequence-small}
d(\{\mathcal{I}_q\}_{q\in \N}) \leq \epsilon.
\end{equation}
\end{proposition}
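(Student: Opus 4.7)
The plan is to bound each $d_q(\mathcal{I}_q)$ uniformly in $q$ by producing a partition $\{\hat{\mathcal{I}}_{q,p}\}_{0\leq p\leq q-1}$ of the removed family according to the level at which the obstruction to $I_q$ first becomes persistent, and then summing a geometric series. To each $I_q \in \hat{\mathcal{I}}_q$ I attach a witness: by construction there exist $s_q \in I_q$ and a primitive sublattice $\Lambda_{I_q} \subset \Z^{n+1}$, of some dimension $i \in \{1,\dots,n\}$, for which
\[
d\bigl(g_{\vect{r}}(q)\,U(\varphi(s_q))\,\Lambda_{I_q}\bigr) \leq \kappa .
\]
Fix a constant $\rho > 0$ (supplied by the Kleinbock--Margulis non-divergence theorem) and call a primitive sublattice $\Lambda$ \emph{dangerous on} an interval $I_p$ at level $p$ if $d(g_{\vect{r}}(p)\,U(\varphi(s))\,\Lambda) \leq \rho$ for every $s \in I_p$. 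I assign $I_q$ to $p(I_q) :=$ the largest $p \in \{0,\dots,q-1\}$ such that $\Lambda_{I_q}$ is \emph{not} dangerous on the level-$p$ ancestor of $I_q$, and set $\hat{\mathcal{I}}_{q,p} := \{I_q \in \hat{\mathcal{I}}_q : p(I_q) = p\}$.

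For $p = q-1$ the contribution is controlled by a direct application of Kleinbock--Margulis non-divergence on $I_{q-1}$: since some $s \in I_{q-1}$ witnesses $d(g_{\vect{r}}(q-1)\,U(\varphi(s))\,\Lambda_{I_q}) > \rho$ for the witness of every $I_q$ in this piece, the measure of the bad subset of $I_{q-1}$ is at most $C\kappa^{\sigma}\,m(I_{q-1})$ for some $\sigma > 0$, and hence $F(\hat{\mathcal{I}}_{q,q-1},I_{q-1}) \leq C\kappa^\sigma R$. For $p < q-1$, by the choice of $p(I_q)$ the sublattice $\Lambda_{I_q}$ remains dangerous throughout the level-$(p+1)$ ancestor of $I_q$; the main counting estimate promised in \S\ref{sec:count-dangerous-intervals}, obtained via the linearization technique and the representation theory of $\SL(2,\R)$ acting on $\bigwedge^i \R^{n+1}$, then bounds the number of such level-$q$ sub-intervals sitting inside a single $I_p$ by $C\kappa^{\sigma} R^{(q-p)(1-\mu)}$ for some constant $\mu > 0$ independent of $q$, $p$, $\vect{r}$ and $\kappa$.

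Combining the two contributions yields
\[
d_q(\mathcal{I}_q) \;\leq\; \sum_{p=0}^{q-1}\Bigl(\frac{4}{R}\Bigr)^{q-p} C\kappa^\sigma R^{(q-p)(1-\mu)} \;=\; C\kappa^\sigma \sum_{p=0}^{q-1}\bigl(4R^{-\mu}\bigr)^{q-p}.
\]
Choosing $M > 1$ so that $4M^{-\mu} < 1$ forces the geometric series, for every $R \geq M$, to be bounded by a constant $C'(R)$ independent of $q$; taking $m$ large enough so that $C\,C'(R)\,\kappa^\sigma = C\,C'(R)\,R^{-m\sigma} < \epsilon$ then yields $d(\{\mathcal{I}_q\}_{q\in\N}) \leq \epsilon$, as desired. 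The main obstacle is the counting estimate for $p < q-1$: the trivial bound $R^{q-p}$ is useless, and the polynomial savings $R^{-\mu(q-p)}$ must be extracted from representation-theoretic rigidity, namely that the $U(\varphi(s))$-orbit of a short vector in $\bigwedge^i \R^{n+1}$ cannot remain simultaneously small on too many level-$q$ sub-intervals of a level-$p$ ancestor without violating the non-degeneracy of $\varphi$. Establishing this savings, by linearizing along the $\SL(2,\x)$ copies introduced in \S\ref{subsec-canonical-representation-sln} and splitting each witness according to the size of its $\vect{w}_+$-component, is precisely the content of \S\ref{sec:count-dangerous-intervals}, \S\ref{subsec-dangerous-case} and \S\ref{subsec-extremely-dangerous-case}.
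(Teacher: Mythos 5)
Your proposal captures the correct high-level architecture of the argument: a partition $\{\hat{\mathcal{I}}_{q,p}\}$ indexed by the depth at which the obstruction first becomes persistent, the Kleinbock--Margulis non-divergence theorem for the shallow levels, and representation-theoretic counting estimates for the deep levels, followed by a geometric summation. However, the claimed uniform bound
\[
F(\hat{\mathcal{I}}_{q,p},I_p)\;\leq\;C\,\kappa^{\sigma}\,R^{(q-p)(1-\mu)}
\qquad\text{for all } 0\leq p\leq q-1
\]
is not what the counting estimates actually deliver, and this breaks the summation argument. The counting in \S\ref{sec:count-dangerous-intervals} (Proposition~\ref{prop:count-dangerous-intervals}) produces a bound of the form $R^{(1-\frac{1}{10n})l}$ for the number of $(q,l)$-dangerous intervals inside an $I_p$, with \emph{no} gain of $\kappa^{\sigma}$; in fact, the lattice-point count of Proposition~\ref{prop:counting-sublattices} introduces a \emph{loss} of $\kappa^{-(n+1)}$, which is absorbed only because the partition forces $l\geq 2000n^2Nm$. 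The smallness in $m$ in the paper's proof is not a $\kappa^{\sigma}$ factor in each term; it comes from the fact that the geometric sum over the dangerous scales $l$ starts at the $m$-dependent index $2000n^2Nm$, so the tail
\[
\sum_{l\geq 2000n^2Nm}\bigl(16R^{-1/(20n)}\bigr)^l
\]
tends to zero as $m\to\infty$. If you insist on contributions at every level $p$, the bound at levels $p$ close to $q-1$ would have the $\kappa^{-(n+1)}$ loss undiluted and the series would not close.

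Two further gaps. First, at $p=q-1$ you invoke Kleinbock--Margulis using only the non-danger of the single witness $\Lambda_{I_q}$, but Theorem~\ref{thm:non-divergence-kleinbock-margulis} requires the max condition to hold for \emph{every} primitive sublattice of \emph{every} dimension; a single good witness does not suffice, and the paper's generic case (Proposition~\ref{prop:small-portion-q-large-p-large}) is carefully defined so that \emph{all} $j$-dimensional $\vect{w}$ obey the lower bound on a window, precisely so that Kleinbock--Margulis applies. Second, your partition leaves $p(I_q)$ undefined when $\Lambda_{I_q}$ is dangerous on every ancestor down to $I_0=I$. This is the extremely dangerous regime, and it cannot be handled by either Kleinbock--Margulis or a pure lattice-point count; the paper resorts to the Bernik--Kleinbock--Margulis theorem (Theorem~\ref{thm:bernik-kleinbock-margulis}) together with Lemma~\ref{lm:key-lemma-extremely-dangerous-2nd-case} and an induction on dimension, yielding the bound $F(\hat{\mathcal{I}}_{q,0},I)\ll R^{(1-\nu)q}$ whose smallness in the final sum comes from $q>10^6n^4Nm$ being large, not from a $\kappa^{\sigma}$ factor. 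Your argument needs a separate mechanism for this case.
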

\par Let $N >1$ be the constant from Proposition \ref{prop:counting-sublattices} and $k>0$ be such that $\kappa = R^{-k}$. We can choose $\kappa$ so that $k$ is an integer. Let us give a partition $\{\hat{\mathcal{I}}_{q,p}\}_{0\leq p \leq q-1}$ of $\hat{\mathcal{I}}_q$ for each $q \in \N$ which shows that Proposition \ref{prop:distance-sequence-small} holds.
\begin{definition}
\label{def:finer-partition-Iq}
\par Let us fix a small constant $0 < \rho < 1 $. We will modify the choice of $\rho$ later in this paper according to the constants arising from our technical results. For $q \leq 10^6 n^4 N k$, let us define $\hat{\mathcal{I}}_{q, 0} := \hat{\mathcal{I}}_q$ and $\hat{\mathcal{I}}_{q,p} = \emptyset$ for other $p$'s.
\par For $q > 10^6 n^4 N k$ and $l = 2000 n^2 N k$, let $ p = q - 2l$. Let us define $\hat{\mathcal{I}}_{q,p'} := \emptyset$ for $p<p' \leq q-1$. Let us define $\hat{\mathcal{I}}_{q,p}$ to be the collection of $I_q \in \hat{\mathcal{I}}_q $ with the following property: there exists $s \in I_q$ such that for any $j=1,\dots, n$ and any $\vect{w} = \vect{w}_1 \wedge \cdots \wedge \vect{w}_j \in \bigwedge^j \Z^{n+1} \setminus \{\vect{0}\}$, 
\[ \max \{\|g_{\vect{r}}(q) U(\bphi(s')) \vect{w}\| : s' \in  [s-R^{-q+l}, s+R^{-q+l}] \} \geq \rho^j.\]
\par Let $\eta = \frac{1}{100n^2}$ and $\eta' = \frac{\eta}{1+r_1}$. For $q > 10^6 n^4 N  k$ and $ 2000  n^2 N k < l \leq 2\eta' q$, let $p = q - 2l$. Let us define $\hat{\mathcal{I}}_{q,p+1} := \emptyset$. For $j = 1, \dots, n$, let us define $\hat{\mathcal{I}}_{q,p}(j)$ to be the collection of $I_q \in \hat{\mathcal{I}}_q\setminus \left( \bigcup_{p' < p} \hat{\mathcal{I}}_{q,p'} \right)$ such that there exists $s  \in I_q$ and $\vect{v} = \vect{v}_1 \wedge \cdots \wedge \vect{v}_j \in \bigwedge^j \Z^{n+1} \setminus\{\vect{0}\}$ such that
\[ \|g_{\vect{r}}(q) U(\bphi(s'))\vect{v}\| < \rho^j,\]
for any $s' \in [s - R^{-q + l}, s + R^{-q +l}]$ and for any $j' = 1, \dots, n$ and any $\vect{w} = \vect{w}_1 \wedge \cdots \wedge \vect{w}_{j'} \in \bigwedge^{j'} \Z^{n+1} \setminus \{\vect{0}\}$,
\[\max\left\{ \|g_{\vect{r}}(q) U(\bphi(s')) \vect{w}\|:s' \in  [s - R^{-q+l+1}, s + R^{-q +l+1} ]\right\} \geq \rho^{j'}.\]
Let us define $\hat{\mathcal{I}}_{q,p} = \bigcup_{j=1}^{n} \hat{\mathcal{I}}_{q,p}(j)$.
 \par For $j = 1, \dots, n$, let us define $\hat{\mathcal{I}}_{q,0}(j)$ to be the collection of $I_q \in \hat{\mathcal{I}}_q\setminus \left( \bigcup_{p' \leq q- 4\eta' q} \hat{\mathcal{I}}_{q,p'} \right)$ such that there exists $s \in I_q$ and $\vect{v}= \vect{v}_1 \wedge \cdots \wedge \vect{v}_j \in \bigwedge^j \Z^{n+1}\setminus\{\vect{0}\}$ such that
\[\max\left\{ \|g_{\vect{r}}(q) U(\bphi(s')) \vect{v}\|:s' \in [s - R^{-q (1-2\eta')}, s+ R^{-q(1-2\eta')} ]\right\} <  \rho^j.\]
Let us define $\hat{\mathcal{I}}_{q,0} = \bigcup_{j=1}^{n} \hat{\mathcal{I}}_{q,0}(j)$.
\par Let us define $\hat{\mathcal{I}}_{q,p} := \emptyset$ for other $p$'s. It is easy to see that $\{\hat{\mathcal{I}}_{q,p}\}_{0 \leq p \leq q-1}$ is a partition of $\hat{\mathcal{I}}_q$.
\end{definition}
\par Besides the definition of $\{\hat{\mathcal{I}}_{q,p}\}_{ 0 \leq p \leq q-1 }$, let us also introduce the notion of dangerous interval and extremely dangerous interval:
\begin{definition}
\label{def:dangerous-interval}
For $q > 10^6 n^4 N k$, $ 1000 n^2 N k \leq l \leq \eta' q $, and $\vect{a} \in \Z^{n+1}\setminus \{\vect{0}\}$, the $(q,l)$-dangerous interval associated with $\vect{a}$, which is denoted by $\Delta_{q,l} (\vect{a})$, is a closed interval of the form $\Delta_{q,l}(\vect{a}) =  [ s - R^{-q + l }, s + R^{-q +l}] \subset [0,1]$ such that $I_q \subset \Delta_{q,l}(\vect{a})$ for some $I_q \in \hat{\mathcal{I}}_q$,
\[ \max  \{ \|g_{\vect{r}}(q) U(\bphi(s')) \vect{a}\|: s' \in \Delta_{q,l}(\vect{a}) \} < \rho \]
and
\[\max \{ \|g_{\vect{r}}(q) U(\bphi(s')) \vect{a}\|: s' \in  [s - R^{-q+l+1}, s + R^{-q+l+1}] \} \geq \rho.\]
 The center $s$ of $\Delta_{q,l}(\vect{a})$ is chosen such that the first coordinate of $U(\bphi(s)) \vect{a}$ is zero.
\par For $q \geq 10^6 n^4 N k $ and $\vect{a} \in \Z^{n+1}\setminus \{\vect{0}\}$, the $q$-extremely dangerous interval associated with $\vect{a}$, which is denoted by $\Delta_q (\vect{a})$, is a closed interval of the form $\Delta_q(\vect{a}) =  [s - R^{-q + l'}, s + R^{-q + l'}]$ with $l' > \eta' q$ such that $I_q \subset \Delta_q(\vect{a})$ for some $I_q \in \hat{\mathcal{I}}_q$,
\[ \max  \{ \|g_{\vect{r}}(q) U(\bphi(s')) \vect{a}\|: s' \in \Delta_{q}(\vect{a}) =  [s - R^{-q+l'}, s + R^{-q+l'}]\} < \rho \]
and
\[\max \{ \|g_{\vect{r}}(q) U(\bphi(s')) \vect{a}\|: s' \in  [s - R^{-q+l'+1}, s + R^{-q+l'+1}] \} \geq \rho.\]
\end{definition}
\begin{remark}
Note that for any $q \geq 10^6 n^4 N k$, there are only finitely many $\vect{a}$'s such that $\Delta_{q, l} (\vect{a})$ or $\Delta_q (\vect{a})$ exist.
\end{remark}

\section{Counting dangerous intervals}
\label{sec:count-dangerous-intervals}
\par In this section we will count dangerous intervals and extremely dangerous intervals.
\begin{proposition}
\label{prop:count-dangerous-intervals}
Let $q \geq 10^6 n^4 N k$, $ 1000 n^2 N k  \leq l \leq \eta' q$ and $p = q - 2l $. For $I_p \in \mathcal{I}_p$, let 
$\mathcal{D}_{q, l} (I_p)$ denote the collection of $(q,l)$-dangerous intervals which intersect $I_p$. Then for any $I_p \in \mathcal{I}_p$, 
\[\sharp\mathcal{D}_{q,l}(I_p) \ll R^{(1- \frac{1}{10n}) l} .\]
\end{proposition}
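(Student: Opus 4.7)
The plan is to apply the linearization technique to count integer vectors giving rise to dangerous intervals. Fix $I_p \in \mathcal{I}_p$, of length $R^{-p} = R^{-q+2l}$. For each $\vect{a} \in \Z^{n+1}\setminus\{\vect{0}\}$ producing a $(q,l)$-dangerous interval $\Delta_{q,l}(\vect{a})$ meeting $I_p$, the first step is to translate the defining condition into explicit constraints on $\vect{a}$. Writing $\vect{a}=(a_0,\dots,a_n)^{\T}$ and $F_{\vect{a}}(s) := a_0 + \sum_{i=1}^n \varphi_i(s) a_i$, the component-wise form of $g_{\vect{r}}(q)U(\varphi(s))\vect{a}$ together with the defining inequality of $\Delta_{q,l}(\vect{a})$ yields
\[ |a_i| \ll \rho\, b^{r_i q}, \quad i=1,\dots,n, \qquad |F_{\vect{a}}(s')| \ll \rho\, b^{-q} \text{ for every } s' \in \Delta_{q,l}(\vect{a}).\]

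Next, I would exploit the analyticity and non-degeneracy of $\varphi$ (Standing Assumption \ref{assumption-2}). A Taylor expansion of $F_{\vect{a}}$ around the center $s_0$ of $\Delta_{q,l}(\vect{a})$, combined with the uniform bound on $F_{\vect{a}}$ on an interval of radius $R^{-q+l}$, gives the derivative estimates $|F_{\vect{a}}^{(k)}(s_0)| \ll \rho\, b^{-q} R^{(q-l)k}$ for $k=0,1,\dots,n$. Non-degeneracy makes the Wronskian of $(1,\varphi_1,\dots,\varphi_n)$ at $s_0$ invertible, so this linear system can be solved to bound each component of $\vect{a}$ in terms of $F_{\vect{a}}(s_0),\dots,F_{\vect{a}}^{(n)}(s_0)$.

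Finally, I would perform the count. The naive count using only the coordinate bounds on $\vect{a}$ and the position of the center $s_0 \in I_p$ yields at best $\ll R^l$ dangerous intervals, which is too weak. To sharpen this to $R^{(1-1/(10n))l}$, I would apply the $\SL(2,\R)$-representation analysis developed in \S\ref{subsec-canonical-representation-sln}: decompose the action of $\SL(2,\varphi(s_0))$ on $\R^{n+1}$ into weight spaces, use the shape of the orbit of $\vect{a}$ under this $\SL(2,\R)$-copy together with the analyticity of $\varphi$ to extract stronger constraints from the derivative bounds above, and invoke Proposition \ref{prop:counting-sublattices} to control the primitive sublattices into which these $\vect{a}$'s can align.

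I expect the main obstacle to be extracting the gain $R^{-l/(10n)}$ over the naive count. This gain reflects a non-concentration property of integer vectors $\vect{a}$ satisfying all the derivative constraints simultaneously, and is not visible from a single lattice-point count at a fixed scale. Carrying it out rigorously requires matching the components of $\vect{a}$ against the corresponding weights $r_i$ in $g_{\vect{r}}(q)$, and almost certainly a case split according to which coordinates of $\vect{a}$ dominate, parallel in spirit to the decomposition $\hat{\mathcal{I}}_{q,p} = \bigcup_{i=1}^{n} \hat{\mathcal{I}}_{q,p}(i)$ in Definition \ref{def:finer-partition-Iq}, with the factor $\rho^j$ in that definition governing the various sub-cases.
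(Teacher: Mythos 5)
Your plan correctly frames the problem — control vectors $\vect{a}$ giving rise to dangerous intervals via the $\SL(2,\R)$-representation structure — but it does not actually contain the mechanism that produces the gain $R^{-l/(10n)}$, and two of the proposed ingredients are misdirected.

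First, the Wronskian/Taylor-expansion approach via $F_{\vect{a}}$ is not what is used here; that is the strategy of the Bernik--Kleinbock--Margulis Theorem~\ref{thm:bernik-kleinbock-margulis}, which the paper invokes only for \emph{extremely} dangerous intervals (Proposition~\ref{prop:count-extremely-dangerous-intervals}). For the present proposition the paper instead linearizes $\varphi$ over $I_p$ (Taylor's theorem shows the quadratic error is of size $R^{-q+4l}$, negligible since $l \leq \eta' q$) and then works purely with the $\SL(2,\vect{h})$ representation. Second, Proposition~\ref{prop:counting-sublattices} (counting primitive sublattices) is not the relevant counting input; what is used is the elementary fact that a lattice in $K_\kappa$ has $\ll \kappa^{-(n+1)}$ points in any ball of radius $1$.

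The substantive gap is the source of the exponent gain. You attribute it to a ``non-concentration property'' and suggest a case split by dominant coordinate; neither is right. The mechanism is the following chain. (i) Because the dangerous condition holds on an interval of radius $R^{-q+l}$, and since conjugating the unipotent flow by $g_{\vect{r}}(q)$ turns a displacement of $R^{-q+l}$ into $U(R^l\vect{h})$, the vector $\vect{v}(x) = g_{\vect{r}}(q)U(\varphi(x))\vect{a}$ is forced to have $z(\mathfrak{k})\vect{w}_1$-coordinate of size $O(R^{-l})$ (the long bounded orbit squeezes the coordinate along which the unipotent shears). (ii) Transporting all such vectors to one base point $x_1 \in I_p$ across a distance $\leq R^{-q+2l}$ inflates only the $\vect{w}_+$-coordinate to size $O(R^l)$, so all of them lie in a box $z(\mathfrak{k})([-CR^l,CR^l]\times[-CR^{-l},CR^{-l}]\times[-C,C]^{n-1})$. (iii) Flowing by $g_{\vect{r}}(-l)$ balances this box: its volume becomes $\asymp b^{\lambda l}$ with $\lambda = n'r_1 + \sum_{i>n'} r_i = 1 + \sum_{i=1}^{n'}(r_1-r_i)$, where $n'$ is the largest $j$ with $(1-\lambda_j)q\leq l$. (iv) Since $x_1 \in I_{q-l}$ for some $I_{q-l}\in\mathcal{I}_{q-l}$, the lattice $g_{\vect{r}}(q-l)U(\varphi(x_1))\Z^{n+1}$ lies in $K_\kappa$, so the box contains $\ll b^{\lambda l}\kappa^{-(n+1)}$ lattice points. (v) Finally, the \emph{arithmetic} point: $l\leq\eta' q$ forces $r_1-r_i\leq 1/(100n^2)$ for $i\leq n'$, so $\lambda\leq 1+\tfrac{1}{100n}$, and together with $b^{1+r_1}=R$, $r_1\geq 1/n$ (hence $b\leq R^{n/(n+1)}$) this converts $b^{(\lambda+o(1))l}$ into $R^{(1-1/(10n))l}$. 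Steps (i), (iii), and (v) are the essential ideas missing from your plan; without (v) in particular you have no way to beat $R^l$.
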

\begin{proposition}
\label{prop:count-extremely-dangerous-intervals}
Let $q \geq 10^6 n^4 N k$. Let $D_q \subset [0,1]$ denote the union of $q$-extremely dangerous intervals contained in $[0,1]$. Then
$D_q$ can be covered by a collection of $N_q$ closed intervals of length $\delta_q$ and 
\[ N_q \leq \frac{K_0 (\rho^{n+1} b^{-\eta q})^{\alpha}}{\delta_q} \]
where $\delta_q =  R^{-q(1-\eta')}$, $K_0>0$ is a constant, and $\alpha = \frac{1}{(n+1)(2n-1)}$.
\end{proposition}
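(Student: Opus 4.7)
The plan is to bound the Lebesgue measure $m(E_q)$ by $K(\rho^{n+1}b^{-\eta q})^{\alpha}$; the covering claim then follows by tiling, since each $\Delta_q(\vect{a})$ has length at least $2\delta_q$, so $E_q$ is covered by $N_q \ll m(E_q)/\delta_q$ closed intervals of length $\delta_q$. First I would translate the extremely-dangerous condition into quantitative constraints on the witness integer vector $\vect{a}$. Writing out coordinates of $g_{\vect{r}}(q)U(\varphi(s'))\vect{a}$, only the first coordinate depends on $s'$: it equals $b^q L_{\vect{a}}(s')$ with $L_{\vect{a}}(s') = a_0 + \sum_{i=1}^n a_i \varphi_i(s')$, while the other coordinates are the constants $b^{-r_i q} a_i$. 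The hypothesis then gives $|a_i| \leq \rho b^{r_i q}$ for $i = 1, \dots, n$ together with the persistence bound $|L_{\vect{a}}(s')| \leq \rho b^{-q}$ throughout the extended interval of length $\asymp R^{-q+l'}$ with $l' > \eta' q$.

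Next I would feed this persistent $L^\infty$-bound into the $\SL(2,\R)$-representation framework of Section \ref{subsec-canonical-representation-sln}, using the embedding via $\varphi^{(1)}(x)$ (nonzero by Standing Assumption \ref{assumption-2}). Under conjugation by $g_{\vect{r}}(q)$, the $s'$-interval of radius $R^{-q+l'}$ rescales to a unipotent-time window of radius $\asymp b^{\eta q}$ in the embedded copy $\SL(2,\varphi^{(1)}(x))$. On the invariant plane $\Span(\vect{w}_+, z(\mathfrak{k})\vect{w}_1)$, the unipotent maps weight $-1$ into weight $+1$ linearly in time: a vector $\alpha\vect{w}_+ + \beta z(\mathfrak{k})\vect{w}_1$ evolves to $(\alpha + t\|\varphi^{(1)}(x)\|_2\beta)\vect{w}_+ + \beta z(\mathfrak{k})\vect{w}_1$, so the orbit staying bounded by $\rho$ across $|t|\leq b^{\eta q}$ forces $|\alpha| \ll \rho$ and $|\beta|\ll \rho b^{-\eta q}$. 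Translating back, the transformed vector $g_{\vect{r}}(q)U(\varphi(x))\vect{a}$ is confined to a region of Euclidean volume $\asymp \rho^{n+1}b^{-\eta q}$, and by unimodularity of $g_{\vect{r}}(q)U(\varphi(x))$ the integer preimage contains $\ll \rho^{n+1}b^{-\eta q}$ witness vectors $\vect{a}$.

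I would then close by a Kleinbock--Margulis-type quantitative non-divergence argument: the set of $s'\in I$ for which the unimodular lattice $g_{\vect{r}}(q)U(\varphi(s'))\Z^{n+1}$ meets this thin box has Lebesgue measure $\ll (\rho^{n+1}b^{-\eta q})^\alpha$. This comes from the $(C,\alpha)$-good property, uniform in $\vect{a}$, of the polynomial $\|g_{\vect{r}}(q)U(\varphi(s'))\vect{a}\|^2$ (Kleinbock--Margulis for analytic non-degenerate $\varphi$); the exponent $\alpha = 1/((n+1)(2n-1))$ encodes the total polynomial degree of this squared norm across its $n+1$ components after Taylor truncation to degree $2n-1$. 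Dividing by $\delta_q$ yields the claimed bound on $N_q$.

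The principal obstacle is the quantitative $\SL(2)$-step: extracting the sharp factor $b^{-\eta q}$ from the $L^\infty$-bound on the long rescaled orbit, uniformly over all $\vect{a}$, while tracking cancellations between the weight-$(+1)$ and weight-$(-1)$ contributions, and iterating the same analysis on $\bigwedge^i \R^{n+1}$ (cf. Section \ref{subsec-canonical-representation-sln}) to handle the higher Pl\"ucker coordinates consistently. This is precisely the delicate linearization-plus-representation analysis that the author flags in Section \ref{subsec-main-tools-proof} as the technical heart of the paper.
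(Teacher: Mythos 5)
Your first paragraph is on the right track: extracting from the extremely dangerous condition that $|a_i|\le\rho b^{r_iq}$ for $i\ge 1$ and that $|L_{\vect a}(s')|\le\rho b^{-q}$ persists over the extended interval is exactly the first step of the paper's proof. But from there you diverge from the paper in a way that creates real gaps.

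The paper does not prove the quantitative covering estimate from scratch. It shows, by a short Taylor expansion of $f(x')=L_{\vect a}(x')$ over the radius-$R^{-q(1-\eta')}$ window, that the persistent bound on $|f|$ forces $|f'(x)|\le b^{q(r_1-\eta)}$; this means every extremely dangerous interval lies inside the set $E_q$ defined in Theorem \ref{thm:bernik-kleinbock-margulis}, and then the covering bound with the exponent $\alpha=\frac{1}{(n+1)(2n-1)}$ is quoted verbatim from that theorem (which in turn is \cite[Theorem 1.4]{Bernik-Kleinbock-Margulis}). You instead route through an $\SL(2,\R)$-rescaling argument and then assert a "Kleinbock--Margulis-type" non-divergence estimate. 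Two concrete problems arise. First, the lattice-point count you claim, "$\ll\rho^{n+1}b^{-\eta q}$ witness vectors," does not follow merely from the small volume of the constraint region: a small-volume box can be long in some direction and contain arbitrarily many primitive lattice points, and Minkowski alone does not give the count you want. Second, and more fundamentally, the final measure bound $m(E_q)\ll(\rho^{n+1}b^{-\eta q})^\alpha$ with $\alpha=\frac{1}{(n+1)(2n-1)}$ is the content of the Bernik--Kleinbock--Margulis theorem; it is not a corollary of generic $(C,\alpha)$-good estimates, and the purported derivation of the exponent from "Taylor truncation to degree $2n-1$" is not a proof. Without invoking that theorem (or reproducing its substantial argument, which involves controlling all derivatives of $f$ simultaneously, not just $f$ and $f'$), your sketch does not establish the claimed bound.

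In short: the translation of the hypothesis into constraints on $\vect a$ is correct, but you should then obtain the derivative bound $|f'(x)|\le b^{q(r_1-\eta)}$ directly by Taylor expansion on the persistence interval and reduce to Theorem \ref{thm:bernik-kleinbock-margulis}, rather than attempting to re-derive that theorem via the $\SL(2)$ machinery -- that machinery is what the paper uses for Proposition \ref{prop:count-dangerous-intervals}, not for this one.
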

\par In fact, Proposition \ref{prop:count-extremely-dangerous-intervals} is a rephrase of the following theorem due to Bernik, Kleinbock and Margulis:
\begin{theorem}[See~{\cite[Proposition 2]{Beresnevich}} and~{\cite[Theorem 1.4]{Bernik-Kleinbock-Margulis}}]
\label{thm:bernik-kleinbock-margulis}
Let $q > 10^6 n^4 N k$. Let us define $E_q \subset [0,1]$ to be the set of $s \in [0,1]$ such that there exists $\vect{a} =(a_0, a_1, \dots, a_n) \in \Z^{n+1}\setminus\{\vect{0}\}$ such that $|a_i| < \rho b^{r_i q}$ for $i=1,\dots,n$, $|f(s)| < \rho b^{-q}$ and $| f'(s)| <  b^{(r_1 - \eta)q}$ where 
\begin{equation}
\label{equ:def-f}
f(s) = a_0 + a_1 \varphi_1(s) + \cdots + a_n \varphi_n(s).
\end{equation}
Then $E_q$ can be covered by a collection $\mathcal{E}_{q}$ of intervals such that
\[m(\Delta) \leq \delta_q \text{ for all } \Delta \in \mathcal{E}_{q},\]
and
\[|\mathcal{E}_{q}| \leq \frac{K_0 (\rho^{n+1} b^{-\eta q})^{\alpha}}{\delta_q},\]
where $\delta_q =  R^{-q(1 - \eta')}$, $K_0 >0$ is a constant, and $\alpha = \frac{1}{(n+1)(2n-1)}$.
\end{theorem}
\par The theorem above is a simplified version of \cite[Theorem 1.4]{Bernik-Kleinbock-Margulis}. The original version is more general.
\begin{proof}[Proof of Proposition \ref{prop:count-extremely-dangerous-intervals}]
\par For every $q$-extremely dangerous interval $\Delta_q(\vect{a}) =  [s - R^{-q+l'}, s + R^{-q +l'}]$ where $l' \geq \eta' q$ and $\vect{a} = (a_0, a_1, \dots, a_n)$, we have that 
\begin{equation}
\label{equ:small-vector}
\|g_{\vect{r}}(q)U(\bphi(s'))\vect{a}\| < \rho
\end{equation}
for every $s' \in \Delta_q (\vect{a})$. By direct computation, we have that 
\[ g_{\vect{r}}(q) U(\bphi(s'))\vect{a} = (v_0(s'), v_1(s'), \dots, v_n(s')) \]
where 
\[v_0(s') = b^q (a_0 + a_1 \varphi_1 (s') + \cdots + a_n \varphi_n (s')),\]
 and $v_i (s') = b^{-r_i q} a_i$ for $i =1, \dots, n$. 
 Then \eqref{equ:small-vector} implies that $|a_i| < \rho b^{r_i q}$ for $i=1, \dots, n$, and $|f(s)| < \rho b^{-q}$, where $f$ is as in \eqref{equ:def-f}. Since $l \ge \eta' q$, we have that 
 \[|f(s')| < \rho b^{-q} \]
 for any $s' \in  [s - R^{-q(1-\eta')} , s + R^{-q(1-\eta')}]$. Let us write $s' = s + r R^{-q(1-\eta')}$ for some $r \in [ -1, 1]$. Then 
 \[
f(s')  =  f(s) +   f'(s)r R^{-q(1-\eta')} + O(R^{-2q(1-\eta')}).
 \]
 Therefore, we have that for any $r \in [ -1,1]$, 
 \begin{align*}
 | f'(s) r R^{-q(1-\eta')}| &=  |f(s') - f(s) - O(R^{-2q(1-\eta')})| \\ 
                            &\leq  |f(s')| + |f(s)| + O(R^{-2q(1-\eta')}) \\
                            &<  \rho b^{-q} + \rho b^{-q} + \rho b^{-q} < b^{-q}.
 \end{align*}
 This implies that 
 \[
 | f'(s)| < R^{q(1-\eta')} b^{-q} = b^{q(r_1-\eta)}.
 \]
The last equality above holds because $b^{1+r_1} = R$ and $\eta' = \frac{\eta}{1+r_1}$. This shows that 
$x \in E_q$ for any $x \in \Delta_q(\vect{a})$, i.e., $\Delta_q(\vect{a}) \subset E_q$. Therefore, we have that 
$D_q \subset E_q$. Then the conclusion follows from Theorem \ref{thm:bernik-kleinbock-margulis}.
 \end{proof}

\par The rest of the section is devoted to the proof of Proposition \ref{prop:count-dangerous-intervals}. This is one of the main technical results of this paper.
\begin{proof}[Proof of Proposition \ref{prop:count-dangerous-intervals}]
\par Let us fix $I_p \in \mathcal{I}_p$. Let us write $I_p =  [s - R^{-q + 2l}, s+ R^{-q + 2l}]$. We claim that we can approximate $\bphi(I_p)$ by its linear part. In fact, for any $s' \in I_p$, let us write $s' = s + r R^{-q + 2l}$ for some $r \in [-1, 1]$. By Taylor's expansion, we have that
\begin{align*}
g_{\vect{r}}(q)U(\bphi(s')) &=  g_{\vect{r}}(q) U(\bphi(s) +  R^{-q + 2l} r\bphi'(s) + O(R^{-2q + 4l})) \\ 
&= g_{\vect{r}}(q) U(O(R^{-2q + 4l})) g_{\vect{r}}(-q) g_{\vect{r}}(q) U(\bphi(s) +  R^{-q + 2l} r\bphi'(s) ) \\ 
&= U(O(R^{-q + 4l})) g_{\vect{r}}(q) U(\bphi(s) +  R^{-q + 2l}r \bphi'(s)).
\end{align*}
Since $l \leq \eta' q$, we have that $O(R^{-q + 4l})$ is exponentially small and thus can be ignored. Therefore, we can approximate $\bphi(s')$ by $\bphi(s) +  \bphi'(s)(s'-s)$ for any $s' \in I_p$.

\par Let us take a $(q,l)$-dangerous interval $\Delta_{q,l}(\vect{a})$ that intersects $I_p$. Without loss of generality, we may assume that $\Delta_{q,l}(\vect{a}) \subset I_p$. If this is not the case, we can replace $I_p$ with a slightly larger interval $I'_p$ such that $\Delta_{q,l}(\vect{a}) \subset I'_p$ and $m(I'_p) < 2 m(I_p)$ and proceed the same argument. Let us write $\Delta_{q,l}(\vect{a}) =  [s'-R^{-q+l}, s'+ R^{-q +l}]$ where $\vect{a} = (a_0, a_1, \dots, a_n) \in \Z^{n+1}\setminus\{\vect{0}\}$. For every $s_0 \in \Delta_{q,l}(\vect{a})$, let us denote
\[g_{\vect{r}}(q)U(\bphi(s_0))\vect{a} = \vect{v}(s_0) = (v_0(s_0), v_1(s_0), \dots, v_n(s_0) ). \]
Then we have that 
\begin{equation}
\label{equ:small-under-unipotent}
\max\{ \|\vect{v}(s_0)\|: s_0 \in \Delta_{q,l}(\vect{a}) \} < \rho 
\end{equation}
 and 
 \begin{equation}
 \label{equ:small-under-unipotent-start-become-large}
 \max\{ \|\vect{v}(s_0)\|: s_0 \in  [s'- R^{-q+l+1}, s'+R^{-q+l+1}] \} \geq \rho .
 \end{equation}
\par Recall that for $j =1, \dots, n$, $\lambda_j = \frac{1+ r_j}{1+ r_1}$. Let $1 \leq n' \leq n $ be the largest index $j$ such that $(1 -\lambda_j)q \leq l$.
\par For $s_0 \in  [s'-R^{-q+l} , s'+ R^{-q + l}]$, let us write $s_0 = s' + r R^{-q + l}$ for $r \in [-1, 1]$. As we explained before, we can approximate $\bphi(s_0)$ by $\bphi(s') +  R^{-q + l} r\bphi'(s')$. 
By our standing assumption on $\bphi$ (Standing Assumption \ref{assumption-2}), we have that $c_1 \leq |\varphi'_j(s_0)| \leq C_1$ for $j = 1, \dots, n$. By direct calculation, we have that 
\begin{align*}
 g_{\vect{r}}(q) U(\bphi(s_0))\vect{a} &=  g_{\vect{r}}(q)U(\bphi(s_0) - \bphi(s')) g_{\vect{r}}(-q) g_{\vect{r}}(q) U(\bphi(s'))\vect{a} \\ 
   &=  g_{\vect{r}}(q)U(r  R^{-q + l} \bphi'(s')) g_{\vect{r}}(-q) \vect{v}(s').
\end{align*} 
  Recall that $\vect{e}_i \in \R^n$ denote the vector with $i$th coordinate equal to $1$ and other coordinates equal to zero. By direct calculation, we have that 
\[
g_{\vect{r}}(q)U(r R^{-q +l} \bphi'(s')) g_{\vect{r}}(-q) = U\left(r  R^l \sum_{i=1}^n R^{-(1 - \lambda_i)q} \varphi'_i(s') \vect{e}_i \right).\]
Therefore, we have 
\begin{equation}
\label{equ:unipotent-action-on-vector}
\vect{v}(s_0) = U\left(r  R^l \sum_{i=1}^n R^{-(1 - \lambda_i)q} \varphi'_i(s') \vect{e}_i \right) \vect{v}(s').
\end{equation}
\par For the case $n' < n$, let us estimate 
\[U\left( - r  R^l \sum_{i=n'+1}^n R^{-(1 - \lambda_i)q} \varphi'_i(s') \vect{e}_i \right)\vect{v}(s_0). \] 
By our assumption, for $i \geq n' +1$, we have that $|r  R^l R^{-(1-\lambda_i)q}| \leq 1$. Therefore, if we write 
\begin{equation}
\label{equ:unipotent-action-on-vector-small-part}
U\left( - r  R^l \sum_{i=n'+1}^n R^{-(1 - \lambda_i)q} \varphi'_i(s') \vect{e}_i \right)\vect{v}(s_0) = \tilde{\vect{v}}(s_0) = (\tilde{v}_0(s_0), \tilde{v}_1(s_0), \dots, \tilde{v}_n(s_0)),
\end{equation}
where $\tilde{v}_0(s_0) = v_0(s_0) -r \sum_{i= n' +1}^n R^l R^{-(1-\lambda_i)q} \varphi'_i(s') v_i(s_0)$ and 
$\tilde{v}_i(s_0) = v_i(s_0)$ for $i=1,\dots, n$, then $|\tilde{v}_0(s_0)| < C = (n+1)C_1 \rho $, and $|\tilde{v}_i(s_0)| < \rho$ for $i = 1, \dots, n$. Let 
\[\vect{h} = \sum_{i=1}^{n'} R^{-(1 - \lambda_i)q } \varphi'_i(s') \vect{e}_i\]
 and 
\[\vect{h}_W = \sum_{i=1}^{n'} R^{-(1 - \lambda_i)q } \varphi'_i(s') \vect{w}_i \in W.\]
  Then $\|\vect{h}\|_2 = \|\vect{h}_W\|_2 \asymp 1$. Combining \eqref{equ:unipotent-action-on-vector} and \eqref{equ:unipotent-action-on-vector-small-part}, we have
\begin{equation} 
\label{equ:unipotent-action-on-vector-main-part}
U(r  R^l \vect{h} ) \vect{v}(s') = ( \tilde{v}_0(s_0), \tilde{v}_1(s_0), \dots, \tilde{v}_n(s_0)),
\end{equation}
where $|\tilde{v}_0(s_0)| < C $, and $|\tilde{v}_i(s_0)| < \rho$ for $i=1,\dots, n$. Let $E_{n'}$ be the subspace of $\R^n$ spanned by $\{\vect{e}_1, \dots, \vect{e}_{n'}\}$ and $W'_{n'}$ be the subspace of $W$ spanned by $\{\vect{w}_1, \dots, \vect{w}_{n'}\}$. Then $\vect{h} \in E_{n'}$. Let $\mathfrak{k} \in \SO(n)$ be an element such that $\mathfrak{k}\cdot \vect{e}_1 = \vect{h}$, $\mathfrak{k}\cdot E_{n'} = E_{n'}$, and $\mathfrak{k} \cdot \vect{e}_i = \vect{e}_i$ for $i = n' +1 , \dots, n$. Let $z(\mathfrak{k}) = \begin{bmatrix}1 & \\ & \mathfrak{k} \end{bmatrix} \in Z$. It is easy to see that $z(\mathfrak{k}) \vect{w}_+ = \vect{w}_+$, $z(\mathfrak{k}) \vect{w}_1 = \vect{h}_W$, $z(\mathfrak{k}) W'_{n'} = W'_{n'}$, and $z(\mathfrak{k}) \vect{w}_i = \vect{w}_i$ for $i = n' +1, \dots, n$. By the definition of $z(\mathfrak{k})$ and our discussion in \S \ref{subsec-canonical-representation-sln}, we have that $U(\vect{h}) = z(\mathfrak{k}) U(\|\vect{h}\|_2\vect{e}_1)z\inv(\mathfrak{k})$. Therefore, we have that $U(\vect{h}) \vect{h}_W =  \vect{h}_W + \|\vect{h}\|_2 \vect{w}_{+}$. Moreover, we have that $U(\vect{h}) \vect{w}_+ = \vect{w}_+$; for $i = 2, \dots, n'$, $U(\vect{h}) z(\mathfrak{k}) \vect{w}_i = z(\mathfrak{k})\vect{w}_i$; and for $i = n'+1, \dots, n$, $U(\vect{h}) \vect{w}_i = \vect{w}_i$. Let us write 
\[\vect{v}(s') = a_+(s') \vect{w}_+ + \sum_{i=1}^{n'} a_i(s') z(\mathfrak{k})\vect{w}_i + \sum_{i= n' +1}^n a_i(s') \vect{w}_i.\] 
Then the above discussion shows that 
\[U(r  R^l \vect{h})\vect{v}(s') = (a_+(s') + r  R^l a_1(s')) \vect{w}_+ + \sum_{i=1}^{n'} a_i(s') z(\mathfrak{k})\vect{w}_i + \sum_{i= n' +1}^n a_i(s') \vect{w}_i.\]
By \eqref{equ:small-under-unipotent}, \eqref{equ:unipotent-action-on-vector-small-part} and \eqref{equ:unipotent-action-on-vector-main-part}, we have that there exists a constant $C >0$ such that $|a_i(s')| < C$ for $i = 1, \dots, n$ and 
$|a_+(s') + r  R^l a_1(s')| < C $ for any $r \in [ -1, 1]$. This implies that $|a_+(s')| < C $, and $|a_1(s')| < C R^{-l} $. Therefore, we have that $\vect{v}(s') \in z(\mathfrak{k}) ([-C, C] \times [-C R^{-l}, C R^{-l}] \times [-C, C]^{n-1})$. 
\par Now let us estimate $|\mathcal{D}_{q,l}(I_p)|$. 

\par Suppose that $\mathcal{D}_{q,l}(I_p) = \{\Delta_{q,l}(\vect{a}_u) : 1 \leq u \leq L\}$. For each $u = 1, \dots, L$, let us take $s_u \in \Delta_{q,l}(\vect{a}_u)\cap I_p$ such that $s_u \in I_{q-1, u}$ for some $I_{q-1,u} \in \mathcal{I}_{q-1}$. Let us denote 
\[\vect{v}_u  = g_{\vect{r}}(q)U(\bphi(s_u))\vect{a}_u.\]
Then by our previous argument, we have that 
\begin{equation}
\label{equ:diophantine-condition-dangerous-interval}
\vect{v}_u  = a_{u, +} \vect{w}_+ + \sum_{i=1}^{n'} a_{u, i} z(\mathfrak{k})\vect{w}_i + \sum_{i=n'+1}^n a_{u,i} \vect{w}_i,
\end{equation}
where $|a_{u, +}| < C $, $|a_{u,1}| < C R^{-l}$, and $ |a_{u, i} | < C $ for $ i = 2, \dots, n$.
\par Now let us consider $g_{\vect{r}}(q)U(\bphi(s_1))\vect{a}_u$. Let us write $s_u = s_1 - r R^{-q + 2l}$ for some $r \in [-1,1]$. As we explained at the beginning of the proof, we can approximate $\bphi(I_p)$ by its linear part. Then we have that 
\begin{align*}
  g_{\vect{r}}(q)U(\bphi(s_1))\vect{a}_u  &= g_{\vect{r}}(q) U(\bphi(s_1) - \bphi(s_u)) g_{\vect{r}}(-q) g_{\vect{r}}(q) U(\bphi(s_u))\vect{a}_u \\
 &= g_{\vect{r}}(q) U(\bphi(s_1) - \bphi(s_u)) g_{\vect{r}}(-q) \vect{v}_u \\
 &= g_{\vect{r}}(q) U(r R^{-q + 2l} \bphi'(s)) g_{\vect{r}}(-q) \vect{v}_u \\ 
 &= U\left(r R^{2l} \sum_{i= 1}^n R^{-(1-\lambda_i)q} \varphi'_i (s) \vect{e}_i \right) \vect{v}_u.
\end{align*}
Let us denote $\vect{h} = \sum_{i=1}^{n'} R^{-(1-\lambda_i)q} \varphi'_i (s') \vect{e}_i$ as before. Then by \eqref{equ:diophantine-condition-dangerous-interval}, we have that 
\begin{align*}
g_{\vect{r}}(q)U(\bphi(s_1))\vect{a}_u & =   U(r R^{2l} \vect{h} + r R^{2l}\sum_{i = n' +1}^n R^{-(1-\lambda_i)q} \bphi'_i(s) \vect{e}_i)\vect{v}_u \\ 
 & =   \left(a_{u, +} + r  R^{2l} a_{u, 1} + r  R^{2l}\sum_{i= n' +1}^n R^{-(1-\lambda_i)q} \varphi'_i(s) a_{u, i} \right) \vect{w}_{+} \\
 &  + \sum_{i=1}^{n'} a_{u, i} z(\mathfrak{k})\vect{w}_i + \sum_{i=n'+1}^n a_{u,i} \vect{w}_i.
\end{align*}
Since $|a_{u,1}| \leq C R^{-l}$, and since for $i = n'+1 , \dots, n$, $(1-\lambda_i) q > l$, $|a_{u, i}| <C$, and $|\varphi'_i(x)|\leq C_1$, we have that 
\begin{align*}
 & \left| a_{u, +} + r  R^{2l} a_{u, 1} + r  R^{2l}\sum_{i= n' +1}^n R^{-(1-\lambda_i)q} \bphi'(s) a_{u, i}  \right|  \\
&\leq  |a_{u, +}| + |r| R^{2l } |a_{u,l}| +  |r| R^{2l } \sum_{i=n'+1}^n  R^{-(1-\lambda_i)q} | \bphi'(s)| |a_{u,i}| \\
&\leq  C + R^{2l } C R^{-l} + R^{2l } \sum_{i = n' + 1}^n R^{-l} C_1 C \\
&\leq   C + R^{2l } C R^{-l } + R^{2l } n R^{-l} C_1 C \\
 &\leq C_2 R^{l} \\
 \end{align*}
 where $ C_2 = 2C + n C_1 C >0$. This implies that for any $u = 1, \dots, L$, we have that 
 \[g_{\vect{r}}(q)U(\bphi(s_1))\vect{a}_u \in z(\mathfrak{k})([-C_2 R^l, C_2 R^l] \times [-C R^{-l}, C R^{-l}] \times [-C, C]^{n-1}).\]
 Let us consider the range of $g_{\vect{r}}(q -l) U(\bphi(s_1))\vect{a}_u= g_{\vect{r}}(-l) g_{\vect{r}}(q)U(\bphi(s_1))\vect{a}_u$. Let us write $g_{\vect{r}}(-l) = d_2 (l) d_1(l)$ where 
 \[d_1(l) = \begin{bmatrix} b^{-l} & & & & \\ & b^{r_1 l} \I_{ n'} & &  & \\ & & b^{r_{n' + 1} l} & & \\ & & & \ddots & \\ 
  & & & & b^{r_n l}\end{bmatrix},\]
  and 
  \[
  d_2(l) = \begin{bmatrix} 1 & & & & & \\ & 1 & & & & \\ & & b^{-( r_1 - r_2)l} & & & \\ & & & \ddots & & \\ 
  & & & & b^{-(r_1 - r_{n'})l} & \\ & & & & & \I_{n - n'}  \end{bmatrix}.
  \]
  Then we have that 
  \[g_{\vect{r}}(q -l) U(\bphi(s_1))\vect{a}_u \in d_2(l) d_1(l) z(\mathfrak{k})([-C_2 R^l, C_2 R^l] \times [-C R^{-l}, C R^{-l}] \times [-C, C]^{n-1}).\]
  By the definition of $z(\mathfrak{k})$, we have that $d_1(l) z(\mathfrak{k}) = z(\mathfrak{k}) d_1(l)$. Therefore, we have that 
  \begin{align*} 
  & d_1(l) z(\mathfrak{k})([-C_2 R^l, C_2 R^l] \times [-C R^{-l}, C R^{-l}] \times [-C, C]^{n-1}) \\
   &= z(\mathfrak{k}) d_1 (l) ([-C_2 R^l, C_2 R^l] \times [-C R^{-l}, C R^{-l}] \times [-C, C]^{n-1}) \\
   &= z(\mathfrak{k}) ([-C_2 b^{r_1 l} , C_2 b^{r_1 l}] \times [-C b^{-l} , C b^{-l} ]\times [-C b^{r_1 l} , C b^{r_1 l}]^{n_1 -1} \times \prod_{i=n'+1}^n [-C b^{r_i l} , C b^{r_i l}])  \\ 
   &\subset z(\mathfrak{k})([-C_2 b^{r_1 l} , C_2 b^{r_1 l}] \times [-1  , 1  ]\times [-C b^{r_1 l} , C b^{r_1 l}]^{n' -1} \times \prod_{i=n'+1}^n [-C b^{r_i l} , C b^{r_i l}]). 
   \end{align*}
   It is easy to see that 
   \[z(\mathfrak{k})([-C_2 b^{r_1 l} , C_2 b^{r_1 l}] \times [-1  , 1  ]\times [-C b^{r_1 l} , C b^{r_1 l}]^{n' -1} \times \prod_{i=n'+1}^n [-C b^{r_i l} , C b^{r_i l}])\]
   can be covered by a collection $\mathcal{B}$ of $ O(b^{\lambda l})$ balls of radius $1$ where $\lambda = n' r_1  + \sum_{i = n' +1}^n r_i$. Then we have that 
   \begin{align*} g_{\vect{r}}(q-l)U(\bphi(s_1))\vect{a}_u & \in d_2(l)\bigcup_{B \in \mathcal{B}} B \\
                                         & =  \bigcup_{B \in \mathcal{B}} d_2(l)B. 
                                         \end{align*}

    Since $d_2(l)$ is a contracting map, for every $B \in \mathcal{B}$, there exists a ball $B'$ of radius $C$ such that 
    $d_2(l) B  \subset B'$. Let $\mathcal{B}'$ denote the collection of all such $B'$'s. Then we have that 
    \[g_{\vect{r}}(q-l)U(\bphi(s_1))\vect{a}_u \in \bigcup_{B' \in \mathcal{B}'} B'.\]

    Since $g_{\vect{r}}(q-l)U(\bphi(s_1))\vect{a}_u \in g_{\vect{r}}(q-l)U(\bphi(s_1))\Z^{n+1}$, we have that 
    \[g_{\vect{r}}(q-l)U(\bphi(s_1))\vect{a}_u \in \bigcup_{B' \in \mathcal{B}'} B' \cap \Lambda,\]
    where $\Lambda = g_{\vect{r}}(q-l)U(\bphi(s_1))\Z^{n+1}$. By our assumption, $s_1 \in I_{q-1, 1}$ for some $I_{q-1, 1} \in \mathcal{I}_{q-1}$. This implies that $s_1 \in I_{q-l}$ for some $I_{q-l} \in \mathcal{I}_{q-l}$. Therefore, $\Lambda = g_{\vect{r}}(q-l) U(\bphi(s_1))\Z^{n+1} \in K_{\kappa}$, i.e., $\Lambda$ does not contain any nonzero vectors with norm $\leq \kappa$. Therefore, there exists a constant $C_4$ such that every ball of radius $1$ contains at most $C_4 \kappa^{-n-1} = C_4 R^{(n+1)k}$ points in $\Lambda$. Thus, we have that 
    \begin{align*} \sharp\mathcal{D}_{q,l}(I_p) = \sharp\{g_{\vect{r}}(q-l)U(\bphi(s_1))\vect{a}_u : 1 \leq u \leq L\} & \leq  \sum_{B' \in \mathcal{B}'} \sharp(B'\cap \Lambda) \\ 
     & \leq  \sum_{B' \in \mathcal{B}'} C_4 R^{(n+1)k} \\ 
        & \leq  C_5 b^{\lambda l + 4n k} \leq C_5 b^{(\lambda + \frac{1}{200n})l}, 
       \end{align*} 
     where $C_5 = C_3 C_4$ and $\lambda = n' r_1  + \sum_{i= n' +1}^n r_i$. Now let us estimate $\lambda$. In fact, 
     \begin{align*}
     \lambda  & = \sum_{i=1}^n r_i  + \sum_{i=1}^{n'} (r_1 - r_i)\\ 
                                 & = 1  + \sum_{i=1}^{n'} (r_1 - r_i) .
     \end{align*}
     By our assumption, for $i=1, \dots, n'$, we have that $r_1 - r_i \leq \frac{l}{q} \leq \frac{1}{100 n^2}$. Therefore, we have that 
     \[\lambda \leq 1 + n \frac{1}{100n^2} = 1+ \frac{1}{100 n}.\]
     Thus, we have that 
     \[\sharp\mathcal{D}_{q,l}(I_p) \leq C_5 b^{(1+ \frac{1}{100n} + \frac{1}{200n})l} \leq C_5 R^{(1- \frac{1}{10n})l}.\]
     The last inequality above holds because $b = R^{\frac{1}{1+r_1}} \leq R^{\frac{n}{n+1}}$.
     \par This completes the proof.

\end{proof} 

\section{Proof of the main result}
\label{sec-proof-main-result}
\par In this section we will finish the proof of Proposition \ref{prop:distance-sequence-small}. By our discussion in \S \ref{sec-introduction} and \S \ref{sec:cantor-like-construction}, Proposition \ref{prop:distance-sequence-small} implies Theorem \ref{thm:main-task}, and thus Theorem \ref{thm:main-thm} and Theorem \ref{thm:double-intersection}. 
\par The structure of the section is as follows. In the first subsection, we will prove Proposition \ref{prop:distance-sequence-small} for the case $q \leq 10^6 n^4 N k$. The second, third and fourth subsections are devoted to the proof for the case $q > 10^6 n^4 N k$. The key point is to estimate $F(\hat{\mathcal{I}}_{q, p}, I_p)$ for $I_p \in \mathcal{I}_p$. The second subsection deals with the case $p = q  - 4000 n^2 N k$. The third subsection deals with the case $p = q - 2l$ where $ 2000 n^2 N k < l < 2 \eta' q$. The fourth subsection deals with the case $p = 0$. 
\par The third and fourth subsections contain some technical results on the canonical representation of $\SL(n+1, \R)$ on $\bigwedge^i V$ for $i = 2, \dots, n$. They are also main technical contributions of this paper.

\par Our basic tool is the following non-divergence theorem due to Kleinbock:
\begin{theorem}[see~{\cite[Theorem 2.2]{Kleinbock2008}}]
\label{thm:non-divergence-dani-margulis}
There exist constants $C, \alpha >0$ such that the following holds: For any $g \in \mathrm{SL}(n+1,\mathbb{R})$, any one parameter unipotent subgroup $U = \{u(r) : r \in \R\} \subset \SL(n+1, \R)$ and any $R >0$, if for any $i= 1,2,\dots, n$ and any $\vect{v} = \vect{v}_1 \wedge \cdots \wedge \vect{v}_i \in \bigwedge^i \Z^{n+1} \setminus \{\vect{0}\}$, 
\[\max\{ \|u(r)g\vect{v}\|: r \in [-R, R] \} \geq \rho^i,\]
then for any $0 < \epsilon <\rho$,
\[ m \left( \{ r \in [-R,R] : u(r) g \Z^{n+1} \notin K_{\epsilon} \} \right) \leq C \left( \frac{\epsilon}{\rho} \right)^{\alpha} R. \]
\end{theorem}

\par We will also need the following important result due to Kleinbock and Margulis \cite{Klein_Mar}.
\begin{theorem}[see~{\cite[Proposition 2.3]{Klein_Mar}}]
\label{thm:kleinbock-margulis-nondegenerate-manifold}
Let $\bphi: [0,1] \to \R^n$ be a $C^n$ non-degenerate curve. Then there exists a constant $\alpha >0$ such that for any $s \in [0,1]$ there exists an interval $J$ centered at $s$ and positive constants $D$ and $\rho$ such that for any $t \geq 0$ and $0 < \epsilon < \rho$ one has
\[ m\left(\{ s' \in J: g_{\vect{r}}(t) u(\bphi(s'))\Z^{n+1} \not\in K_{\epsilon}  \} \right) \leq D \left(\frac{\epsilon}{\rho}\right)^{\alpha} m(J). \]
\end{theorem}
\begin{remark}
\label{rmk:kleinbock-margulis-nondegenerate-manifold}
\par The exact statement in \cite[Proposition 2.3]{Klein_Mar} is more general than the above theorem. For example, the statement holds for any $C^n$ differentiable non-degenerate submanifolds.
\end{remark}
\par From Theorem \ref{thm:kleinbock-margulis-nondegenerate-manifold}, one can easily deduce the following corollary:
\begin{corollary}
\label{cor:nondegenerate-submanifold-quantitative-estimate}
Let $\bphi: [0,1] \to \R^n$ be a $C^n$ non-degenerate curve. Then there exist constants $C>0$, $\alpha >0$ and $0<\rho_1<1$ such that for any $t \geq 0$ and $0 < \epsilon < \rho_1$ one has
\[ m\left(\{ s \in [0,1]: g_{\vect{r}}(t) u(\bphi(s))\Z^{n+1} \not\in K_{\epsilon}  \}\right) \leq C \left(\frac{\epsilon}{\rho_1}\right)^{\alpha}. \]
\end{corollary}
\begin{proof}
\par For any $s \in [0,1]$, one can find the corresponding interval $J = J(s)$, constants $D(s) >0$ and $\rho(s) >0$ arising from Theorem \ref{thm:kleinbock-margulis-nondegenerate-manifold}. Then $\{J(s): s \in [0,1]\}$ is an open covering of $[0,1]$. Since $[0,1]$ is compact, there is a finite covering $\{J(s_i): i = 1, 2, \dots, M\}$. Without loss of generality, we may assume that $m(J(s_i)) \leq 2$. Let us choose $\rho_1 := \min \{ \rho(s_i): i = 1,2, \dots, M\}$ and $C := 2 M \max\{D(s_i): i=1,2,\dots, M\} $. Then for any $t \geq 0$ and $0 <\epsilon < \rho_1$, we have that 
\[ E_{t, \epsilon} \subset \bigcup_{i=1}^M E_{t, \epsilon}\cap J(s_i) \]
where $E_{t, \epsilon} := \{ s \in [0,1]: [g_{\vect{r}}(t) u(\bphi(s))] \not\in K_{\epsilon}  \}$. By Theorem \ref{thm:kleinbock-margulis-nondegenerate-manifold}, for any $i=1,2,\dots, M$, we have that 
\[ m(E_{t, \epsilon}\cap J(s_i)) \leq D(s_i)\left(\frac{\epsilon}{\rho(s_i)}\right)^{\alpha} m(J(s_i)) \leq D(s_i) \left(\frac{\epsilon}{\rho_1}\right)^{\alpha} \cdot 2 .\]
Therefore, we have that
\[
	m(E_{t, \epsilon}) \leq \sum_{i=1}^M 2 D(s_i) \left(\frac{\epsilon}{\rho_1}\right)^{\alpha} \leq C \left(\frac{\epsilon}{\rho_1}\right)^{\alpha}.
\]
This completes the proof.
\end{proof}

\par Later in this paper, we will choose $0< \rho < 1$ such that $C\left(\frac{2 \rho}{\rho_1}\right)^{\alpha} < \frac{1}{1000}$.


\subsection{The case where $q$ is small}
\label{subsec-case-q-small}
\par In this subsection, let us assume that $q \leq 10^6 n^4 N k$. Then $\hat{\mathcal{I}}_{q,0} = \hat{I}_q$ and $\hat{\mathcal{I}}_{q,p} = \emptyset$ for other $p$.
\begin{proposition}
\label{prop:small-portion-q-small}

\[ F(\hat{\mathcal{I}}_{q, 0}, I) \ll R^{q - \alpha k}.\]
\end{proposition}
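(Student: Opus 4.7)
\textbf{Proof plan for Proposition \ref{prop:small-portion-q-small}.} The goal is to control the number of length-$R^{-q}$ subintervals of $I$ that meet the ``bad set''
\[ B_\kappa := \{x \in I : g_{\vect{r}}(q)U(\varphi(x))\Z^{n+1} \notin K_\kappa \} \]
using the Kleinbock--Margulis non-divergence theorem (Theorem \ref{thm:non-divergence-kleinbock-margulis}). First I would verify the hypothesis of that theorem on the interval $J = I$: for any $i \in \{1,\dots,n\}$ and any nonzero $\vect{v} \in \bigwedge^i \Z^{n+1}$, Theorem \ref{thm:shah-sln-representations} gives
\[ \max\{\|g_{\vect{r}}(q)U(\varphi(x))\vect{v}\| : x \in I\} \geq c \|\vect{v}\| \geq c > \rho^{n+1} \geq \rho^i, \]
using that $\|\vect{v}\| \geq 1$ for a nonzero integer vector and the choice $\rho^{n+1} < c$ made at the end of \S\ref{sec-proof-main-result} preamble. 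Then Theorem \ref{thm:non-divergence-kleinbock-margulis} with $\epsilon = C_0\kappa$ will yield
\[ m(B_{C_0\kappa}) \leq C(C_0\kappa/\rho)^\alpha \ll \kappa^\alpha = R^{-\alpha m}, \]
for any fixed absolute $C_0 \geq 1$.

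The crux is the local step: to show that every $I_q \in \hat{\mathcal{I}}_{q,0}$ is actually contained in $B_{C_0\kappa}$ for an absolute constant $C_0$, not just to meet the smaller set $B_\kappa$. Fix $s \in I_q$ and $\vect{a} = (a_0,\vect{a}')^{\T} \in \Z^{n+1}\setminus\{\vect{0}\}$ witnessing $I_q \in \hat{\mathcal{I}}_q$, so that $\|g_{\vect{r}}(q)U(\varphi(s))\vect{a}\| \leq \kappa$. Using $U(\y)U(\y') = U(\y+\y')$ and $U(\y)\vect{w}_+ = \vect{w}_+$, for any $x \in I_q$ one has
\[ g_{\vect{r}}(q)U(\varphi(x))\vect{a} = g_{\vect{r}}(q)U(\varphi(s))\vect{a} + b^q\bigl((\varphi(x)-\varphi(s))\cdot \vect{a}'\bigr)\vect{w}_+. \]
Only the first coordinate changes. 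The bound $|a_i| \leq \kappa b^{r_i q} \leq \kappa b^{r_1 q}$ from $\|g_{\vect{r}}(q)U(\varphi(s))\vect{a}\|\leq \kappa$, combined with the Taylor expansion $\varphi(x) = \varphi(s) + (x-s)\varphi^{(1)}(s) + O((x-s)^2)$ on the interval of length $R^{-q}$, gives
\[ b^q\bigl|(\varphi(x)-\varphi(s))\cdot \vect{a}'\bigr| \ll b^q \cdot R^{-q}\cdot \kappa b^{r_1 q} = b^{(1+r_1)q} R^{-q}\kappa = \kappa, \]
using Standing Assumption \ref{assumption-2} to bound $\|\varphi^{(1)}\|_\infty$. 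Hence $\|g_{\vect{r}}(q)U(\varphi(x))\vect{a}\| \leq C_0 \kappa$ for all $x \in I_q$, so $I_q \subset B_{C_0\kappa}$.

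Finally, since the intervals in $\hat{\mathcal{I}}_{q,0}$ are interior-disjoint pieces of $\Par_R(\mathcal{I}_{q-1})$ of length $R^{-q}$,
\[ F(\hat{\mathcal{I}}_{q,0}, I) \cdot R^{-q} \leq m(B_{C_0\kappa}) \ll R^{-\alpha m}, \]
giving $F(\hat{\mathcal{I}}_{q,0}, I) \ll R^{q-\alpha m}$ as required. The only delicate point is the local comparison step; it is clean here because $q \leq 10^6 n^4 N m$ implies $\kappa = R^{-m}$ is fixed once $m$ is chosen, while the Jacobian bound $b^{(1+r_1)q}R^{-q}=1$ is exact and independent of $q$, so the comparison constant $C_0$ is genuinely absolute.
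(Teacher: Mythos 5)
Your proposal is correct and follows essentially the same route as the paper: verify the hypothesis of the Kleinbock--Margulis non-divergence theorem on $J=I$ via Theorem \ref{thm:shah-sln-representations}, apply it to bound the measure of the bad set, and relate $F(\hat{\mathcal{I}}_{q,0},I)$ to that measure. The only difference is that the paper states ``it is easy to see that $g_{\vect{r}}(q)U(\varphi(I_q))\Z^{n+1}\subset X\setminus K_{2\kappa}$'' without proof, while you actually carry out the local Taylor/translation computation (correctly exploiting $b^{(1+r_1)q}R^{-q}=1$) and obtain a dilation constant $C_0$ depending on $n$ and $C_1$ rather than the paper's ``$2$''; since the constant is absorbed into the implicit constant of $\ll$ when the non-divergence bound is applied, this is harmless and your version is in fact the cleaner justification.
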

\begin{proof}
By Corollary \ref{cor:nondegenerate-submanifold-quantitative-estimate}, we have that for any $\kappa = R^{-k} >0$ such that $2\kappa < \rho$, the following holds:
\[ m(\{s \in [0,1]: g_{\vect{r}}(q)U(\bphi(s))\Z^{n+1} \notin K_{2\kappa} \}) \leq C \left(\frac{2\kappa}{\rho}\right)^{\alpha}. \]
\par On the other hand, by the definition of $\hat{\mathcal{I}}_q$, for any $I_q \in \hat{\mathcal{I}}_q$, there exists $s \in I_q$ such that 
\[ g_{\vect{r}}(q)U(\bphi(s)) \Z^{n+1} \in X \setminus K_{ \kappa}. \] 
Since $g_{\vect{r}}(q)U(\bphi(I_q)) \Z^{n+1}$ is contained in $1$-neighborhood of $g_{\vect{r}}(q)U(\bphi(s)) \Z^{n+1}$, we have
 \[g_{\vect{r}}(q)U(\bphi(I_q)) \Z^{n+1} \subset X \setminus K_{2 \kappa}. \] 
 Therefore, we have that 
\begin{align*}
F(\hat{\mathcal{I}}_{q,0}, I) R^{-q} & =  m\left(\bigcup_{I_q \in \hat{\mathcal{I}}_q} I_q \right) \\ 
                                     & =  m(\{s \in I: g_{\vect{r}}(q)U(\bphi(s))\Z^{n+1} \notin K_{2\kappa} \}) \leq C_6 \kappa^{\alpha} = C_6 R^{-\alpha k}
\end{align*} 
where $C_6 = C \left( \frac{2}{\rho} \right)^{\alpha}$. This finishes the proof.
\end{proof}
Let us choose $R >1$ such that $R^{\alpha} > 1000^{10^6 n^4 N}$. 
\begin{proof}[Proof of Proposition \ref{prop:distance-sequence-small} for $q \leq 10^6 n^4 N k$]
\par It suffices to show that 
\[ \left( \frac{4}{R} \right)^q F(\hat{\mathcal{I}}_{q,0}, I) \]
can be arbitrarily small. In fact, by Proposition \ref{prop:small-portion-q-small}, we have that
\begin{align*}
	\left( \frac{4}{R} \right)^q F(\hat{\mathcal{I}}_{q,0}, I) & = \left( \frac{4}{R} \right)^q O(R^{q - \alpha k})  \\
	 & = O\left(\frac{4^q}{ R^{\alpha k}}\right)   =  O\left(\frac{4^{10^6 n^4 N k}}{R^{\alpha k}}\right)  = O\left(\left(\frac{4}{1000}\right)^{10^6 n^4 N k}\right) .
\end{align*}
Then it is easy to see that $\left( \frac{4}{R} \right)^q F(\hat{\mathcal{I}}_{q,0}, I) \to 0$ as $k \to \infty$.
\par This completes the proof for $q \leq 10^6 n^4 N k$.
\end{proof}

\subsection{The generic case}
\label{subsec-generic-case}
\par The rest of the section is devoted to the proof of Proposition \ref{prop:distance-sequence-small} for $q > 10^6 n^4 N k$. In the following subsections, we will estimate $F(\hat{\mathcal{I}}_{q,p}, I_p)$ for different $p$'s. In this subsection we will estimate $F(\hat{\mathcal{I}}_{q,p}, I_p)$ for $p = q - 4000 n^2 N k$. We call it {\bf the generic case}.
\begin{proposition}
\label{prop:small-portion-q-large-p-large}
Let $q > 10^6 n^4 N k$ and $p = q- 4000 n^2 N k$. Then for any $I_p \in \mathcal{I}_p$, we have that 
\[F(\hat{\mathcal{I}}_{q,p}, I_p) \ll R^{q-p - \alpha k} .\]
\end{proposition}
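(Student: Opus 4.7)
The plan is to combine the Kleinbock--Margulis non-divergence theorem (Theorem \ref{thm:non-divergence-kleinbock-margulis}) with a Vitali-type covering argument. By the definition of $\hat{\mathcal{I}}_{q,p}$ in the regime $l = 2000 n^2 N m$, $p = q - 2l$, every $I_q \in \hat{\mathcal{I}}_{q,p} \cap I_p$ supplies a point $x(I_q) \in I_q$ together with a window $J(I_q) := [x(I_q) - R^{-q+l},\, x(I_q) + R^{-q+l}]$ on which the wedge-maximum hypothesis of Theorem \ref{thm:non-divergence-kleinbock-margulis} is satisfied at time $t = q$. The idea is to cover $\bigcup_{I_q} I_q$ by a bounded-overlap family of such windows, apply non-divergence on each, and sum.

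The first step is to upgrade the pointwise definition of $\hat{\mathcal{I}}_q$ to a uniform statement over all of $I_q$. If $s \in I_q$ and $\vect{a} \in \Z^{n+1} \setminus \{\vect{0}\}$ satisfy $\|g_{\vect{r}}(q)U(\varphi(s))\vect{a}\| \leq \kappa$, then any $s' \in I_q$ has $|s'-s| \leq R^{-q}$, and the same commutation computation as in the proof of Proposition \ref{prop:count-dangerous-intervals} gives
\[g_{\vect{r}}(q)U(\varphi(s'))\vect{a} = U(z)\cdot g_{\vect{r}}(q)U(\varphi(s))\vect{a},\]
with $z_i = (s'-s)\varphi^{(1)}_i(s)\, b^{(1+r_i)q} + O\!\bigl((s'-s)^2 b^{(1+r_i)q}\bigr)$. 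Because $b^{(1+r_i)q} \leq b^{(1+r_1)q} = R^q$, this yields $\|z\| \leq C_0$ for a constant $C_0$ depending only on $n$ and the $C^2$-norm of $\varphi$, hence $\|g_{\vect{r}}(q)U(\varphi(s'))\vect{a}\| \leq C_0 \kappa$. Thus each $I_q \in \hat{\mathcal{I}}_q$ is entirely contained in the sublevel set
\[B := \{ s \in I : g_{\vect{r}}(q)U(\varphi(s))\Z^{n+1} \notin K_{C_0 \kappa} \}.\]

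Next I would perform a Vitali selection: from the family $\{ J(I_q) : I_q \in \hat{\mathcal{I}}_{q,p}\cap I_p \}$ extract a disjoint subfamily $J^*_1,\dots,J^*_k$ whose three-fold enlargements $3 J^*_i$ cover $\bigcup_{I_q} J(I_q)$. Each $J^*_i$ has length $2R^{-q+l}$ and lies in the $R^{-q+l}$-neighbourhood of $I_p$, which has total length at most $3 R^{-q+2l}$, so disjointness forces $k \leq 2 R^l$. On each $3 J^*_i$ the wedge-maximum hypothesis still holds (enlarging the interval can only enlarge the maxima), so Theorem \ref{thm:non-divergence-kleinbock-margulis} applied with $\epsilon = C_0 \kappa$ gives
\[m\!\bigl(3 J^*_i \cap B\bigr) \leq C \left( \frac{C_0 \kappa}{\rho} \right)^{\alpha} \cdot 6 R^{-q+l}.\]
Summing over $i$ and using $\kappa = R^{-m}$,
\[F(\hat{\mathcal{I}}_{q,p}, I_p) \cdot R^{-q} = m\!\left( \bigcup_{I_q} I_q \right) \leq k \cdot C' \kappa^{\alpha} R^{-q+l} \ll R^{2l - q - \alpha m},\]
which rearranges to $F(\hat{\mathcal{I}}_{q,p}, I_p) \ll R^{q-p-\alpha m}$ since $q - p = 2l$.

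The main obstacle I anticipate is the propagation step: the defining property of $\hat{\mathcal{I}}_q$ only asserts a single bad point per $I_q$, whereas non-divergence controls the measure of a sublevel set. The commutation identity above is exactly what bridges this gap, and it works cleanly only because the expansion factor $b^{(1+r_i)q} \leq R^q$ introduced by conjugation with $g_{\vect{r}}(q)$ is precisely balanced by the length scale $R^{-q}$ of $I_q$; this is why one must accept a slight weakening of the constant, from $\kappa$ to $C_0 \kappa$, in the sublevel set $B$. Once this lemma is in hand, the rest is direct volume counting against the non-divergence bound.
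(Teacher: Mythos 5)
Your argument is correct, but it routes through a Vitali covering that the paper avoids entirely. The paper's proof of this proposition applies the Kleinbock--Margulis theorem exactly once: fix a single witness pair $(I_q, x)$ with $I_q \in \hat{\mathcal{I}}_{q,p}$ and $x \in I_q \cap I_p$; by definition the wedge-maximum hypothesis holds on the window $[x - R^{-q+l}, x + R^{-q+l}]$, and since $x \in I_p = [a,b]$ that window sits inside $I'_p := [a - R^{-q+l}, b + R^{-q+l}]$, whose length is at most $2 m(I_p)$; the wedge-maximum over $I'_p$ can only be larger than over the window (the same monotonicity you invoke to pass from $J^*_i$ to $3J^*_i$, just pushed all the way up), so Theorem \ref{thm:non-divergence-kleinbock-margulis} applies directly to $J = I'_p$ with the enlarged $\epsilon$ (the paper writes $2\kappa$ where you more carefully write $C_0\kappa$) and gives the estimate in one stroke. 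What your Vitali decomposition into roughly $R^l$ disjoint windows buys you is nothing extra here: the factor $k \approx R^l$ from the count multiplies the per-window length $R^{-q+l}$, and the product is just $R^{-q+2l} \asymp m(I_p)$, so the bookkeeping lands in the same place while carrying more moving parts. The one piece of your write-up that is genuinely a clarification of something the paper leaves implicit is the propagation lemma showing each $I_q$ lies entirely in the sublevel set $\{s : g_{\vect{r}}(q)U(\varphi(s))\Z^{n+1} \notin K_{C_0\kappa}\}$, which the paper summarizes as ``it is easy to see''; your explicit commutation computation and the identification of the balance $b^{(1+r_i)q}\cdot R^{-q} \leq 1$ is the right justification.
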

\begin{proof}
\par Let us fix $I_p \in \mathcal{I}_p$. If $F(\hat{\mathcal{I}}_{q,p}, I_p) = 0$, then the statement trivially holds. 
\par Suppose $F(\hat{\mathcal{I}}_{q,p}, I_p) >0$, let us take $I_q \in \hat{\mathcal{I}}_{q,p}$ and $s \in I_q\cap I_p$. Without loss of generality, we may assume that $[s- R^{-q+ 2000 n^2 N k}, s + R^{-q + 2000 n^2 N k}] \subset I_p$. If this is not the case, we can replace $I_p$ with a slightly larger interval $I'_p \supset I_p$ such that $[s- R^{-q+ 2000 n^2 N k}, s + R^{-q + 2000 n^2 N k}] \subset I'_p$ and $m(I'_p) < 2 m(I_p)$ and proceed the same argument. Then for any $i = 1, \dots, n$ and $\vect{v} = \vect{v}_1 \wedge \cdots \wedge \vect{v}_i \in \bigwedge^i \Z^{n+1}\setminus\{\vect{0}\}$, we have that 
\[\max\{\|g_{\vect{r}}(q) U(\bphi(s')) \vect{v}\|: s' \in [s- R^{-q+ 2000 n^2 N k}, s + R^{-q + 2000 n^2 N k}]\} \geq \rho^i.\]
 Therefore, we have that 
\[\max\{ \|g_{\vect{r}}(q)U(\bphi(s')) \vect{v}\| : s' \in I_p \} \geq \rho^i.\]
\par On the other hand, as we explained in the proof of Proposition \ref{prop:count-dangerous-intervals}, we can approximate $\bphi(I_p)$ by its linear part, that is to say, for any $s' \in I_p$, we approximate $\bphi(s')$ by $ \bphi(s) + (s'-s)\bphi'(s)$. For $s' \in I_p$, let us write $s' = s + r R^{-q + 4000 n^2 N k}$ where $r \in [-1,1]$ and denote $g =  g_{\vect{r}}(q) U(\bphi(s))$. Then
\begin{align*}
 g_{\vect{r}}(q)U(\bphi(s')) & =  g_{\vect{r}}(q) U(\bphi(s') - \bphi(s)) g_{\vect{r}}(-q) g_{\vect{r}}(q) U(\bphi(s)) \\
                               & =  g_{\vect{r}}(q) U(r R^{-q + 4000 n^2 N k} \bphi'(s)) g_{\vect{r}}(-q) g \\
                               & =  U(r R^{4000 n^2 N k} \vect{h}) g,
\end{align*}
where $\vect{h} = \varphi'_1(s) \vect{e}_1 + \sum_{i=2}^n R^{-(1-\lambda_i)q} \varphi'_i(s) \vect{e}_i$. Recall that $\lambda_i = \frac{1+r_i}{1+ r_1}$. Since $\{U(rR^{4000 n^2 N k} \vect{h} ): r \in \R\}$ is a one parameter unipotent subgroup, by Theorem \ref{thm:non-divergence-dani-margulis}, we have that
\[m(\{ r \in [-1,1]: U(r R^{4000 n^2 N k} \vect{h}) g \Z^{n+1} \notin K_{2\kappa}\}) \leq 2C \left(\frac{2\kappa}{\rho}\right)^{\alpha} . \]
This implies that 
\[m(\{ s \in I_p: g_{\vect{r}}(q)U(\bphi(s))\Z^{n+1} \notin K_{2\kappa}\}) \leq 2C \left(\frac{2\kappa}{\rho}\right)^{\alpha} m(I_p). \]
\par On the other hand, it is easy to see that $g_{\vect{r}}(q) U(\bphi(I_q))\Z^{n+1} \subset X \setminus K_{2\kappa}$ for any $I_q \in \hat{\mathcal{I}}_q$. Therefore we have that
\[\begin{array}{cl}
 & F(\hat{\mathcal{I}}_{q, p}, I_p) R^{-q} \\
 \leq & m(\{ s \in I_p: g_{\vect{r}}(q)U(\bphi(s))\Z^{n+1} \notin K_{2\kappa}\}) \\
 \leq & 2C \left( \frac{2\kappa}{\rho} \right)^{\alpha} m(I_p) \\
 =  & 2 C \left(\frac{2}{\rho}\right)^{\alpha} \kappa^{\alpha} R^{-p}  = C_7 R^{-p - \alpha k}
\end{array}\]
where $C_7 = 2 C \left(\frac{2}{\rho}\right)^{\alpha}$. This proves the statement.
\end{proof}
\par By Proposition \ref{prop:small-portion-q-large-p-large}, we have that for $p = q - 4000 n^2 N k$ and any $I_p \in \mathcal{I}_p$, the following holds: 
\begin{equation}
\label{equ:q-large-p-large}
 \left(\frac{4}{R}\right)^{q-p} F(\hat{\mathcal{I}}_{q,p}, I_p)  \ll \left(\frac{4}{R}\right)^{q-p} R^{q-p - \alpha k} 
   =   \frac{4^{4000 n^2 N k}}{R^{\alpha k}} = \left( \frac{4}{1000}\right)^{4000 n^2 N k}.
\end{equation}
Then it is easy to see that $\left(\frac{4}{R}\right)^{q-p} F(\hat{\mathcal{I}}_{q,p}, I_p) \to 0$ as $ k \to \infty$.


\subsection{Dangerous case}
\label{subsec-dangerous-case}
\par In this subsection, we will consider the case where $2000 n^2 N k < l < 2\eta' q$ and $p = q - 2l$. We call this case {\bf the $(q,l)$-dangerous case}.
\begin{proposition}
\label{prop:small-portion-dangerous-case}
For any $I_p \in \mathcal{I}_p$, we have that 
\[ F(\hat{\mathcal{I}}_{q,p}, I_p) \ll R^{ q - p -\frac{l}{20n} }. \]
\end{proposition}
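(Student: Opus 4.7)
The plan is to reduce the estimate to Proposition \ref{prop:count-dangerous-intervals} by showing that every $I_q \in \hat{\mathcal{I}}_{q,p}$ is contained in a $(q,l')$-dangerous interval associated to some primitive integer vector $\vect{a} \in \Z^{n+1}\setminus\{\vect{0}\}$, for some scale $l' \leq l$. Granted such a reduction, the estimate follows by a clean counting step: each $(q,l')$-dangerous interval has length $2R^{-q+l'}$ and therefore contains $\ll R^{l'}$ intervals of $\hat{\mathcal{I}}_q$, while Proposition \ref{prop:count-dangerous-intervals} bounds the number of $(q,l')$-dangerous intervals meeting $I_p$ by $\ll R^{(1-1/(10n))l'}$; summing the resulting geometric series over $l' \in [1000 n^2 N m, l]$ then gives $F(\hat{\mathcal{I}}_{q,p}, I_p) \ll R^{(2-1/(10n))l}$, which is comfortably stronger than the claimed $R^{q-p-l/(20n)} = R^{(2-1/(20n))l}$.

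The extraction of $\vect{a}$ is based on Minkowski's theorem. For $I_q \in \hat{\mathcal{I}}_{q,p}(i)$ with critical wedge $\vect{v} = \vect{v}_1 \wedge \cdots \wedge \vect{v}_i$ and associated primitive sublattice $\Lambda_i = \Span_{\R}(\vect{v}_1,\dots,\vect{v}_i) \cap \Z^{n+1}$, the covolume of $g_{\vect{r}}(q)U(\varphi(s))\Lambda_i$ is $\asymp \|g_{\vect{r}}(q)U(\varphi(s))\vect{v}\| \leq \rho^i$ at the center $s$ of $I_q$. Equation \eqref{equ:minkowski-thm} then furnishes a nonzero $\vect{a} \in \Lambda_i \subset \Z^{n+1}$ with $\|g_{\vect{r}}(q)U(\varphi(s))\vect{a}\| \ll \rho$. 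Meanwhile the genericity clause of Definition \ref{def:finer-partition-Iq}, applied with $j=1$ and $\vect{w}=\vect{a}$, forces $\max\{\|g_{\vect{r}}(q)U(\varphi(s'))\vect{a}\|: s' \in [s-R^{-q+l}, s+R^{-q+l}]\} \geq \rho$. By continuity in $s'$ there is then a scale $l_{\vect{a}} \leq l$ at which the max over $[s-R^{-q+l_{\vect{a}}}, s+R^{-q+l_{\vect{a}}}]$ equals some $c\rho$ with $c \in [1/2,1]$, up to harmlessly adjusting constants in the definition of dangerous interval, exhibiting $I_q$ inside a $(q,l_{\vect{a}})$-dangerous interval associated to $\vect{a}$.

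The main obstacle lies in the regime $l \in (\eta' q, 2\eta' q)$, where $l$ exceeds the range of validity of Proposition \ref{prop:count-dangerous-intervals} and the Minkowski vector $\vect{a}$ could a priori produce a scale $l_{\vect{a}} \in (\eta' q, l]$ out of reach. To handle this case I would first straighten $\varphi$ on $I_p$ as in the proof of Proposition \ref{prop:count-dangerous-intervals}, and then exploit the $\SL(2, \varphi^{(1)}(s))$ representation-theoretic framework of Section \ref{subsec-canonical-representation-sln}: decomposing $\vect{v}$ into weight vectors for the one-parameter unipotent subgroup directed along $\varphi^{(1)}(s)$, the hypothesis that $\|g_{\vect{r}}(q)U(\varphi(s'))\vect{v}\|$ stays below $\rho^i$ on an interval of length $2R^{-q+l}$ forces the high-weight components of $\vect{v}$ to be small, constraining the extracted vector $\vect{a}$ so that its own bad interval is of scale at most $\eta' q$. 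Any residual contribution from $l_{\vect{a}} > \eta' q$ corresponds to a $q$-extremely dangerous interval in the sense of Definition \ref{def:dangerous-interval}, whose count is controlled separately by Proposition \ref{prop:count-extremely-dangerous-intervals}. This representation-theoretic step is the technical heart of the argument and is precisely the place where the finer input from Section \ref{subsec-canonical-representation-sln} becomes essential.
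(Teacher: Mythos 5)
The plan to reduce everything to $(q,l')$-dangerous intervals via Minkowski and then invoke Proposition~\ref{prop:count-dangerous-intervals} cannot be completed as stated, and the difficulty you flag (the regime $l > \eta' q$) is not the real obstruction. The genuine problem is that the Minkowski-extracted vector $\vect{a}$ need not produce any $(q,l')$-dangerous interval at a legal scale $l' \geq 1000 n^2 N m$. Write $g = g_{\vect{r}}(q)U(\varphi(s))$ and let $\vect{v}'_1$ be the shortest Minkowski reduced vector of the primitive sublattice $\Lambda_i$; in the notation of Section~\ref{subsec-canonical-representation-sln} decompose $g\vect{v}'_1 = a_+(1)\vect{w}_+ + a_1(1) z(\mathfrak{k})\vect{w}_1 + \vect{w}'(1)$. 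The unipotent flow $U(rR^{l}\vect{h})$ moves the $\vect{w}_+$-component by $rR^{l}a_1(1)$. If $|a_1(1)|$ is of order $\rho$ rather than of order $R^{-l/2}$, then $\|g_{\vect{r}}(q)U(\varphi(s'))\vect{v}'_1\|$ crosses $\rho$ within $O(R^{-q})$ of $s$, so the intermediate-value scale $l_{\vect{a}}$ is essentially $0$, far below the floor $1000n^2Nm$ imposed in Definition~\ref{def:dangerous-interval}. Those $I_q$ are then captured by no dangerous interval whatsoever, and your geometric-series count does not see them. (There is also the smaller issue that Minkowski only gives $\|g\vect{v}'_1\| \ll \rho$ with a dimensional constant, so the IVT step needs care even to get started; but that is secondary.)

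The paper is structured precisely around this dichotomy. Lemma~\ref{lm:key-lemma} shows that the clean reduction you hope for holds exactly when $|a_1(1)| \leq R^{-l/2}$, in which case one gets a dangerous interval at scale $l' \in [l/2, l]$ (not merely $l' \leq l$; the lower bound $l/2$ is part of the content). In the complementary case $|a_1(1)| > R^{-l/2}$ there is no dangerous interval at a useful scale; instead one obtains the decomposition $g\vect{v} = \vect{w}_+ \wedge \vect{w}^{(i-1)} + \vect{w}^{(i)}$ with $\|\vect{w}^{(i)}\| \leq \rho^i R^{-l/2}$, and this second regime is handled by a separate counting argument, Lemma~\ref{lm:2nd-case-key-lemma} and Corollary~\ref{cor:2nd-case-key-lemma}, which flows the configuration by $g_{\vect{r}}(-l/2)$ and counts primitive sublattices via Proposition~\ref{prop:counting-sublattices}. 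Your third paragraph gestures at a representation-theoretic input but inverts the dichotomy: smallness of the high-weight component $a_1(1)$ is what yields the dangerous interval, while \emph{largeness} of $a_1(1)$ is what forces the second decomposition and requires the new counting lemma. As written, the proposal omits the entire second branch of Lemma~\ref{lm:key-lemma} and Lemma~\ref{lm:2nd-case-key-lemma}, which is the technical heart of the paper's argument for this proposition, so there is a genuine gap.
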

\par Let us recall that for $1000 n^2 N k < l' < \eta' q$, a $(q,l')$-dangerous interval $\Delta_{q,l'}(\vect{a})$ associated with a nonzero integer vector $\vect{a} \in \Z^{n+1}$ is a closed interval of the form 
\[\Delta_{q,l'}(\vect{a}) = [ s - R^{-q + l'}, s + R^{-q + l'}]\]
such that $I_q \subset \Delta_{q,l'}(\vect{a})$ for some $I_q \in \hat{\mathcal{I}}_q$,
\[ \max\{ \|g_{\vect{r}}(q)U(\bphi(s'))\vect{a}\|: s' \in \Delta_{q,l'}(\vect{a}) \} < \rho \]
 and
 \[ \max\{ \|g_{\vect{r}}(q)U(\bphi(s'))\vect{a}\|: s' \in [ s - R^{-q + l'+1}, s + R^{-q + l'+1}] \} \geq \rho. \]
\par The following lemma is crucial to prove Proposition \ref{prop:small-portion-dangerous-case} and is one of the main technical contributions of this paper:
\begin{lemma}
\label{lm:key-lemma}
For any $i = 1, \dots, n$ and $I_q \in \hat{\mathcal{I}}_{q,p}(i)$ intersecting $I_p$, one of the following two cases holds:
\begin{enumerate}[label=\textit{Case \arabic*.}]
\item there exists a $(q,l')$-dangerous interval $\Delta_{q, l'} (\vect{a})$ containing $I_q$ for some $ l/2 \leq l' \leq l$;
\item there exists $s \in I_q$ and 
\[\vect{v} = \vect{v}_1 \wedge \cdots \wedge \vect{v}_i \in \bigwedge\nolimits^i \Z^{n+1}\setminus \{\vect{0}\}\]
such that if we write 
\[ g_{\vect{r}}(q)U(\bphi(s))\vect{v} = \vect{w}_+ \wedge \vect{w}^{(i-1)} + \vect{w}^{(i)} \]
where $\vect{w}^{(i-1)} \in \bigwedge^{i-1} W$ and $\vect{w}^{(i)} \in \bigwedge^i W$, then we have that $\|\vect{w}_+ \wedge \vect{w}^{(i-1)}\| = \| \vect{w}^{(i-1)} \| < \rho^i$ and $\|\vect{w}^{(i)}\| \leq \rho^i R^{-l/2}$.
\end{enumerate}

\end{lemma}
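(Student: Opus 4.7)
The plan is to exploit the algebraic structure of the canonical representation of $\SL(n+1,\R)$ on $\bigwedge^i V$ from Section \ref{subsec-canonical-representation-sln}, combined with the $\SL(2,\R)$-subrepresentation analysis that underlies the proof of Proposition \ref{prop:count-dangerous-intervals}. The key starting observation, verified by direct computation, is that if one decomposes $\vect{v} = \vect{w}_+ \wedge \vect{v}^{(i-1)} + \vect{v}^{(i)}$ with $\vect{v}^{(i-1)} \in \bigwedge^{i-1}W$ and $\vect{v}^{(i)} \in \bigwedge^i W$, then
\[
g_{\vect{r}}(q)U(\varphi(x))\vect{v} = \vect{w}_+ \wedge \Bigl(b^q\, g_{\vect{r}}(q)|_W\bigl(\vect{v}^{(i-1)} + \iota_{\varphi(x)}\vect{v}^{(i)}\bigr)\Bigr) + g_{\vect{r}}(q)|_W \vect{v}^{(i)},
\]
so the $\bigwedge^i W$-component $\vect{w}^{(i)} = g_{\vect{r}}(q)|_W \vect{v}^{(i)}$ is entirely independent of $x$. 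Consequently the hypothesis $\|g_{\vect{r}}(q)U(\varphi(s'))\vect{v}\| \leq \rho^i$ on $[s - R^{-q+l}, s+R^{-q+l}]$ already yields the baseline bounds $\|\vect{w}^{(i-1)}(s')\|\leq \rho^i$ for every $s'$ in the interval and $\|\vect{w}^{(i)}\|\leq \rho^i$, so the true content of the lemma is the improvement of the latter by the factor $R^{-l/2}$.

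To extract this improvement, I would select a rotation $z(\mathfrak{k})\in Z$ with $\mathfrak{k}\cdot\vect{e}_1$ aligned to the dominant direction of $\varphi^{(1)}(x)$ at scale $l$ (as in the proof of Proposition \ref{prop:count-dangerous-intervals}), and pass to the $\SL(2,\vect{x})$-action on $\bigwedge^i V$. This representation splits into two-dimensional standard pieces (each pairing $\vect{w}_+\wedge\vect{w}$ with $z(\mathfrak{k})\vect{w}_1\wedge\vect{w}$ for $\vect{w}\in\bigwedge^{i-1}z(\mathfrak{k})W_2$) plus trivial pieces inside $\bigwedge^i z(\mathfrak{k})W_2 \oplus \vect{w}_+\wedge z(\mathfrak{k})\vect{w}_1\wedge\bigwedge^{i-2}z(\mathfrak{k})W_2$. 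On each standard piece the unipotent acts as $(a,b)\mapsto(a+rb,b)$, and as $s'$ sweeps the hypothesis interval, the effective parameter $r$ (coming from $g_{\vect{r}}(q)U(\varphi(s')-\varphi(x))g_{\vect{r}}(-q)$) ranges over an interval of length $\asymp R^l$. The constraint $|a+rb|\leq \rho^i$ on this range then forces the coefficient $|b|\ll \rho^i R^{-l}$, comfortably beating the target rate $R^{-l/2}$ on the standard portion of $\vect{w}^{(i)}$.

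The main obstacle, and the source of the dichotomy, is the trivial portion of $\vect{w}^{(i)}$ lying in $\bigwedge^i z(\mathfrak{k})W_2$, which is fixed by the $\SL(2,\vect{x})$-action and thus invisible to the argument just sketched. If this trivial portion is itself $\leq \rho^i R^{-l/2}$, combining with the previous step yields conclusion (B). Otherwise, the largeness of the trivial portion means the primitive sublattice $\Lambda_i\subset\Z^{n+1}$ spanned by $\vect{v}_1,\ldots,\vect{v}_i$ is heavily concentrated along the non-dominant subspace $g_{\vect{r}}(q)^{-1}z(\mathfrak{k})W_2$; I would then pass to a Minkowski reduced basis $\vect{u}_1,\ldots,\vect{u}_i$ of $\Lambda_i$ at $x$ and study the shortest element $\vect{u}_1$. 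Using \eqref{equ:minkowski-thm} (so that $\lambda_1\cdots\lambda_i\asymp \rho^i$) together with the non-divergence hypothesis that every single nonzero integer vector has orbit maximum $\geq \rho$ on the hypothesis interval, the continuous nondecreasing function $t\mapsto \max_{s'\in [x_q-R^{-q+t},x_q+R^{-q+t}]}\|g_{\vect{r}}(q)U(\varphi(s'))\vect{u}_1\|$ must, under the largeness assumption, still lie below $\rho$ at $t=l/2$. The intermediate value theorem then produces $l'\in[l/2,l]$ at which the maximum equals $c\rho$ with $c\in[1/2,1]$, giving the required $(q,l')$-dangerous interval containing $I_q$ — conclusion (A). The most delicate step will be rigorously verifying the implication \emph{large trivial portion $\Rightarrow$ the shortest $\vect{u}_1$ has maximum below $\rho$ on the half-scale interval}; this requires carefully combining the representation-theoretic decomposition with the lattice-reduction bounds and with the lower-degree non-divergence hypotheses built into the definition of $\hat{\mathcal{I}}_{q,p}(i)$.
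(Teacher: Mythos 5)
Your proposal correctly assembles the key structural ingredients: Minkowski-reducing the primitive sublattice $\Lambda_i$, passing to the $\SL(2,\vect{x})$-subrepresentation through the rotation $z(\mathfrak{k})$ aligned with the dominant direction of $\varphi^{(1)}(x)$, splitting $\bigwedge^i W$ into standard pieces (where the unipotent ranging over $\asymp R^l$ forces the coefficient to be $\ll \rho^i R^{-l}$) plus the trivial piece in $\bigwedge^i z(\mathfrak{k})W_2$, and then a contrapositive dichotomy on the trivial piece. Your opening observation that $\vect{w}^{(i)}$ is independent of $x$ is correct but not load-bearing, since the $z(\mathfrak{k})$-splitting does depend on $x$.

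The genuine gap is the step you flag as ``most delicate'': you assert but give no argument that a large trivial piece forces the shortest reduced vector $\vect{u}_1$ to stay below $\rho$ on the half-scale interval, and this implication is in fact the entire content of the lemma. The paper closes it by a short wedge identity your outline lacks. Writing the Minkowski-reduced basis as $g\vect{v}'_j = a_+(j)\vect{w}_+ + a_1(j)z(\mathfrak{k})\vect{w}_1 + \vect{w}'(j)$ with $\vect{w}'(j)\in z(\mathfrak{k})W_2$ and expanding $g\vect{v}=\bigwedge_j g\vect{v}'_j$, the trivial piece is $\bigwedge_j\vect{w}'(j)$, the standard-piece coefficient is $\sum_j\epsilon_1(j)a_1(j)\bigwedge_{k\neq j}\vect{w}'(k)$, and
\[
\epsilon_1(1)a_1(1)\bigwedge_{j=1}^i\vect{w}'(j) \;=\; \vect{w}'(1)\wedge\left(\sum_{j=1}^i\epsilon_1(j)a_1(j)\bigwedge_{k\neq j}\vect{w}'(k)\right),
\]
since wedging $\vect{w}'(1)$ against the $j\geq 2$ terms kills them. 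Taking norms and using the Minkowski bound $\|\vect{w}'(1)\|\leq\|g\vect{v}'_1\|\leq\rho$ together with the standard-piece bound $\leq\rho^i R^{-l}$ gives $|a_1(1)|\cdot\|\bigwedge_j\vect{w}'(j)\|\leq\rho^{i+1}R^{-l}$, which turns your dichotomy on the trivial piece into a clean dichotomy on the scalar $|a_1(1)|$: if $|a_1(1)|>R^{-l/2}$ the trivial piece is $<\rho^i R^{-l/2}$ (conclusion (B)); if $|a_1(1)|\leq R^{-l/2}$ then $U(rR^{l/2}\vect{h})g\vect{v}'_1 = O(\rho)$ for $r\in[-1,1]$, and combined with the $j=1$ lower bound $\geq\rho$ at scale $l$ built into the definition of $\hat{\mathcal{I}}_{q,p}(i)$ and the continuity/monotonicity of the running maximum, this produces a $(q,l')$-dangerous interval with $l/2\leq l'\leq l$ containing $I_q$ (conclusion (A)). Without this identity, or an equivalent quantitative link between the trivial piece, the standard coefficient, and $a_1(1)$, your intermediate-value-theorem step has no input to work with.
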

\begin{proof}
\par If $i = 1$, then the first case holds. We may assume that $i \geq 2$.
\par By the definition of $\hat{\mathcal{I}}_{q,p}(i)$, there exists $\vect{v} = \vect{v}_1 \wedge \cdots \wedge \vect{v}_i \in \bigwedge^i \Z^{n+1}\setminus \{\vect{0}\}$ such that for any $s \in I_q$, 
\[ \max\{ \|g_{\vect{r}}(q)U(\bphi(s')) \vect{v}\|: s' \in [s- R^{-q+l}, s+ R^{-q +l} ] \} < \rho^i \]
and 
\[ \max\{ \|g_{\vect{r}}(q)U(\bphi(s')) \vect{v}\|: s' \in [s- R^{-q+l+1}, s+ R^{-q +l+1} ] \} \geq \rho^i. \]
\par Without loss of generality, we may assume that the sublattice $L_i$ generated by $\{\vect{v}_1, \dots , \vect{v}_i \}$ is a primitive $i$-dimensional sublattice of $\Z^{n+1}$. Then $\Lambda_i = g_{\vect{r}}(q) U(\bphi(s)) L_i$ is a primitive $i$-dimensional sublattice of $\Lambda = g_{\vect{r}}(q) U(\bphi(s))\Z^{n+1}$. For simplicity, let us denote $g = g_{\vect{r}}(q)U(\bphi(s))$. Let us choose the Minkowski reduced basis $\{g\vect{v}'_1, \dots, g\vect{v}'_i\}$ of $\Lambda_i$. Since 
\[ d(\Lambda_i) = \|g\vect{v}\| < \rho^i,\]
we have that $\|g\vect{v}'_1\| < \rho$ by the Minkowski Theorem. 
\par Let us repeat the argument in the proof of Proposition \ref{prop:count-dangerous-intervals}. Recall that 
for $j = 1,\dots, n $, $\lambda_j = \frac{1+ r_j}{1+ r_1}$. Let $1 \leq n' \leq n$ be the largest index $j$ such that $(1- \lambda_j) q \leq l$. By Standing Assumption \ref{assumption-2}, we have that $c_1 \leq |\bphi'_i (s)| \leq C_1$ for any $i = 1, \dots, n$ and $s \in [0,1]$. Fix any $s \in I_q$ and let $\vect{h} = \sum_{i=1}^{n'} R^{-(1-\lambda_i)q} \bphi'(s)\vect{e}_i$. For any $s' \in [ s - R^{-q + l}, s + R^{-q + l}]$, let us write $s' = s + r R^{-q + l}$ where $ r \in [-1, 1]$. By the same argument as in the proof of Proposition \ref{prop:count-dangerous-intervals}, we have that 
\[ g_{\vect{r}}(q)U(\bphi(s')) = U(O(1)) U(r R^l \vect{h}) g_{\vect{r}}(q)U(\bphi(s)) = U(O(1)) U(r R^l \vect{h}) g. \]
Therefore, we have that 
\[ \| U(r R^l \vect{h}) g\vect{v} \| < \rho^i \]
for any $r \in [-1,1]$.
\par Following the notation in the proof of Proposition \ref{prop:count-dangerous-intervals}, let us denote 
$\vect{h} = \mathfrak{k} \cdot \vect{e}_1$ for $\mathfrak{k} \in \SO(n)$ and 
\[ z(\mathfrak{k}) = \begin{bmatrix}1 & \\ & \mathfrak{k} \end{bmatrix} \in Z.\]
For $ j = 1, \dots, i$, let us write 
\[g \vect{v}'_j = a_+ (j) \vect{w}_+ + a_1(j) z(\mathfrak{k})\vect{w}_1 + \vect{w}'(j) \]
where $\vect{w}' (j) \in z(\mathfrak{k}) W_2$. Then 
\begin{align*}
	g \vect{v}  & =  (g\vect{v}'_1)\wedge \cdots \wedge (g\vect{v}'_i) \\
	            & =  \bigwedge_{j=1}^i (a_+ (j) \vect{w}_+ + a_1(j) z(\mathfrak{k})\vect{w}_1 + \vect{w}'(j) ) \\
	            & =  \vect{w}_+ \wedge (z(\mathfrak{k})\vect{w}_1) \wedge \left( \sum_{j <j'} \epsilon_{+,1}(j,j') a_+(j) a_1(j') \bigwedge_{k \neq j,j'} \vect{w}'(k) \right) \\ 
	            &  + \vect{w}_+ \wedge \left( \sum_{j=1}^i \epsilon_+(j) a_+(j) \bigwedge_{k \neq j} \vect{w}'(k) \right) + (z(\mathfrak{k})\vect{w}_1)\wedge \left( \sum_{j=1}^i \epsilon_1(j) a_1(j) \bigwedge_{k \neq j} \vect{w}'(k) \right) \\ 
	            &  + \bigwedge_{j=1}^i \vect{w}'(j) 
	\end{align*}
where $\epsilon_{+,1}(j,j'), \epsilon_+(j), \epsilon_1(j) \in \{\pm 1\}$ for every $j,j' \in \{1, \dots, i\}$. By our discussion in \S \ref{subsec-canonical-representation-sln} on the representation of $\SL(2, \vect{h})$ on $\bigwedge^i V$, we have that 
\begin{align*}
U( r R^l \vect{h})g \vect{v} & =  \vect{w}_+ \wedge (z(\mathfrak{k})\vect{w}_1) \wedge \left( \sum_{j <j'} \epsilon_{+,1}(j,j') a_+(j) a_1(j') \bigwedge_{k \neq j,j'} \vect{w}'(k) \right) \\ 
	            &  + \vect{w}_+ \wedge \left( \sum_{j=1}^i \epsilon_+(j) a_+(j) \bigwedge_{k \neq j} \vect{w}'(k) \right) \\
	            &  +  r R^l \vect{w}_+ \wedge \left( \sum_{j=1}^i \epsilon_1(j) a_1(j) \bigwedge_{k \neq j} \vect{w}'(k) \right) \\ 
	            &   + (z(\mathfrak{k})\vect{w}_1)\wedge \left( \sum_{j=1}^i \epsilon_1(j) a_1(j) \bigwedge_{k \neq j} \vect{w}'(k) \right) + \bigwedge_{j=1}^i \vect{w}'(j) .
\end{align*}
Since $\|U(r R^l \vect{h}) g \vect{v}\| < \rho^i$ for any $r \in [-1,1]$, we have that 
\[ \left\|\sum_{j=1}^i \epsilon_1(j) a_1(j) \bigwedge_{k \neq j} \vect{w}'(k)\right\| \leq \rho^i R^{-l}. \]
Let us consider the following two cases:
\begin{enumerate}
	\item $|a_1(1)| \leq R^{-l/2}$.
	\item $|a_1(1)| > R^{-l/2}$.
\end{enumerate}
\par Let us first suppose $|a_1(1)| \leq R^{-l/2}$. Note that $\|g\vect{v}'_1\| < \rho$. Then by repeating the calculation in the proof of Proposition \ref{prop:count-dangerous-intervals}, we conclude that 
\[ \max\{ \|g_{\vect{r}}(q) U(\bphi(s')) \vect{v}'_1\| : s' \in [ s- R^{-q + l/2}, s + R^{-q + l/2} ] \}  < \rho. \]
On the other hand, by our definition on $\hat{\mathcal{I}}_{q, p}(i)$, we have that
\[\max\{ \|g_{\vect{r}}(q)U(\bphi(s'))\vect{v}'_1\|: s' \in [ s - R^{-q+l+1}, s + R^{-q+l+1}] \} \geq \rho. \] 
This implies that $I_q \subset \Delta_{q,l'}(\vect{v}'_1)$ for some $l/2 \leq l' \leq l$. This proves the first part of the statement.
\par Now let us suppose $|a_1(1)| > R^{-l/2}$. Then we have that 
\begin{align*}
\epsilon_1(1) a_1(1) \bigwedge_{j=1}^i \vect{w}'(j) & =  \vect{w}'(1)\wedge \left( \epsilon_1(1) a_1(1) 
\bigwedge_{k\neq 1} \vect{w}'(k) \right) \\ 
                          & =  \vect{w}'(1)\wedge \left( \sum_{j=1}^i \epsilon_1(j) a_1(j) \bigwedge_{k \neq j} \vect{w}'(k) \right).
\end{align*}
Therefore, we have that 
	\begin{align*}
	|a_1 (1)| \left\|\bigwedge_{j=1}^i \vect{w}'(j)\right\| & =  \left\| \vect{w}'(1)\wedge \left( \sum_{j=1}^i \epsilon_1(j) a_1(j) \bigwedge_{k \neq j} \vect{w}'(k) \right) \right\| \\
	& \leq  \|\vect{w}'(1)\| \left\| \sum_{j=1}^i \epsilon_1(j) a_1(j) \bigwedge_{k \neq j} \vect{w}'(k) \right\| \\
	& \leq \rho \cdot \rho^i R^{-l} = \rho^{i+1} R^{-l}.
	\end{align*}
Since $|a_1(1)| > R^{-l/2}$ and $\rho < 1$, we have that 
\[ \left\|\bigwedge_{j=1}^i \vect{w}'(j)\right\| \leq \rho^{i} R^{-l/2}. \]
If we write 
\[ g \vect{v} = \vect{w} \wedge \vect{w}^{(i-1)} + \vect{w}^{(i)} \]
where $\vect{w}^{(i-1)} \in \bigwedge^{i-1} W$ and $\vect{w}^{(i)} \in \bigwedge^i W$, then 
\[ \vect{w}^{(i)} =  (z(\mathfrak{k})\vect{w}_1)\wedge \left( \sum_{j=1}^i \epsilon_1(j) a_1(j) \bigwedge_{k \neq j} \vect{w}'(k) \right) + \bigwedge_{j=1}^i \vect{w}'(j). \]
By our previous argument, we have that 
\[ \|\vect{w}^{(i)}\| \leq \rho^i R^{-l/2}. \]
This proves the second part of the statement.
\end{proof}
\par The following lemma takes care of the second case of Lemma \ref{lm:key-lemma}.
\begin{lemma}
\label{lm:2nd-case-key-lemma}
Let $i \in \{2,\dots, n \}$. Let $\mathcal{D}_{q,p}(I_p, i)$ denote the collection of $I_q \in \hat{\mathcal{I}}_{q,p}$ intersecting $I_p$ and not contained in any $(q,l')$-dangerous interval for any $l/2 \leq l' \leq l$. Let 
\[D_{q,p}(I_p, i) := \bigcup_{I_q \in \mathcal{D}_{q,p}(I_p, i)} I_q.\] 
Then for any closed subinterval $J \subset I_p$ of length $R^{-q + (1+ \frac{1}{2n})l}$, we have that 
\[ m(D_{q,p}(I_p, i) \cap J) \ll R^{-\frac{l}{20n}} m(J). \]
\end{lemma}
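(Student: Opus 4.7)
I adapt the ``counting lattice points in a box'' strategy from the proof of Proposition~\ref{prop:count-dangerous-intervals} to the exterior representation of $\SL(n+1,\R)$ on $\bigwedge^i V$, using the additional contraction $\|\vect{w}^{(i)}\| \leq \rho^i R^{-l/2}$ supplied by Lemma~\ref{lm:key-lemma} to gain the extra savings needed to get down to $R^{-l/(20n)}$.  For each $I_q \in \mathcal{D}_{q,p}(I_p,i) \cap J$, the very definition of $\mathcal{D}_{q,p}(I_p,i)$ rules out the first conclusion of Lemma~\ref{lm:key-lemma}, so we extract a primitive integer vector $\vect{v}_{I_q} \in \bigwedge^i \Z^{n+1}\setminus\{\vect{0}\}$ and a point $x_{I_q} \in I_q$ satisfying the second-case decomposition $g_{\vect{r}}(q)U(\varphi(x_{I_q}))\vect{v}_{I_q} = \vect{w}_+ \wedge \vect{w}^{(i-1)} + \vect{w}^{(i)}$ with $\|\vect{w}^{(i-1)}\| \leq \rho^i$ and $\|\vect{w}^{(i)}\| \leq \rho^i R^{-l/2}$.

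Since $|J| = R^{-q + (1 + \frac{1}{2n})l}$ and $l \leq 2\eta' q$, the Taylor remainder of $\varphi$ on $J$ is exponentially small, so as in Proposition~\ref{prop:count-dangerous-intervals} we may assume $\varphi$ is affine on $J$.  Fix a base point $x_1 \in J$, transport each $\vect{v}_{I_q}$ from $x_{I_q}$ to $x_1$ via the conjugation identity $g_{\vect{r}}(q)U(\varphi(x_{I_q})) = U(\tilde{\vect{y}})\,g_{\vect{r}}(q)U(\varphi(x_1))$ with $\tilde{\vect{y}} = g_{\vect{r}}(q)\cdot(\varphi(x_{I_q}) - \varphi(x_1))$, and then apply $g_{\vect{r}}(-l)$ to pass to time $q-l$.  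Using the decomposition of $\bigwedge^i V$ into $\SL(2,\vect{h})$-weight components developed in \S\ref{subsec-canonical-representation-sln}, I would show that $g_{\vect{r}}(q-l)U(\varphi(x_1))\vect{v}_{I_q}$ is confined to a specific box $\mathcal{R} \subset \bigwedge^i V$ whose side lengths in the different weight components are governed by the $r_j$'s, precisely analogously to the box constructed in the proof of Proposition~\ref{prop:count-dangerous-intervals}.  The crucial new point is that the side lengths in the $\bigwedge^i W$-directions acquire an additional contraction factor $R^{-l/2}$ coming directly from the bound on $\|\vect{w}^{(i)}\|$.

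Next, since $x_1$ lies in some $I_{q-l} \in \mathcal{I}_{q-l}$, the lattice $\Lambda_1 = g_{\vect{r}}(q-l)U(\varphi(x_1))\Z^{n+1}$ lies in $K_{\kappa}$; combining this with Proposition~\ref{prop:counting-sublattices} applied in $\bigwedge^i V$ and with a Minkowski lattice-point count against the box $\mathcal{R}$, the number of distinct primitive $\vect{v}_{I_q}$ arising is bounded by $\ll R^{9l/(20n)}$ (the generic count $R^{(1-1/(10n))l}$ that would come from Proposition~\ref{prop:count-dangerous-intervals} is multiplicatively improved by the extra $R^{-l/2}$-contraction in the $\bigwedge^i W$-component).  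For each such $\vect{v}$, the fact that $U(\vect{y})$ acts on $\bigwedge^i V$ as $I + D(\vect{y})$ with $D(\vect{y})$ nilpotent of order $2$ implies that $x \mapsto g_{\vect{r}}(q)U(\varphi(x))\vect{v}$ is an affine function of $x$, so by convexity the set $\{x \in J : \|g_{\vect{r}}(q)U(\varphi(x))\vect{v}\| \leq \rho^i\}$ is a single interval; the slope estimate forced by $\|\vect{w}^{(i)}\| \leq \rho^i R^{-l/2}$ together with the $\hat{\mathcal{I}}_{q,p}(i)$-condition pins down its length at $\ll R^{-q+l}$.  Multiplying the two bounds gives $m(D_{q,p}(I_p,i)\cap J) \ll R^{9l/(20n)}\cdot R^{-q+l} = R^{-l/(20n)}\,m(J)$, as required.

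\textbf{The main obstacle} I anticipate is the counting step: one has to carry out the $\SL(2,\vect{h})$-representation analysis on $\bigwedge^i V$ with care, and verify that the extra $R^{-l/2}$-contraction in the $\bigwedge^i W$-direction propagates into a genuine volume loss of the right numerical shape.  Unlike in Proposition~\ref{prop:count-dangerous-intervals}, where the box was essentially a product of one-dimensional slices, here one must track all $\binom{n+1}{i}$ weight components of the exterior power and the combinatorial way the $\vect{w}^{(i)}$-contraction distributes among them.  This bookkeeping is the heart of the technical contribution and is where the small but critical $R^{-l/(20n)}$ savings is actually earned.
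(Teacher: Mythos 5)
Your high-level plan (assume affineness, transport to a base point, gain from the $R^{-l/2}$-smallness of $\vect{w}^{(i)}$, multiply a count of distinct primitive $\vect{v}$'s by the length of each corresponding bad interval) is in the right spirit, but the two key steps as you describe them don't actually close.

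First, the time shift is $g_{\vect{r}}(-l/2)$, not $g_{\vect{r}}(-l)$. The paper's argument is delicately calibrated: applying $g_{\vect{r}}(-l/2)$ expands $\bigwedge^i W$-components by at most $b^{l/2}\leq R^{l/2}$, which exactly cancels the $\rho^i R^{-l/2}$-bound on $\vect{w}^{(i)}$; the $(\vect{w}')^{(i-1)}$-term transported by the factor $R^{(1+\frac{1}{2n})l}$ is likewise kept under control because $(\vect{w}')^{(i-1)}$ is small of order $R^{-l}$. The net effect is the crucial estimate $\|g_{\vect{r}}(-l/2)g(1)\vect{v}(M)\| \leq 1$. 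If you instead apply $g_{\vect{r}}(-l)$, the $\bigwedge^i W$-component can be blown up by $b^l$, and since $r_1 < 1$ for $n\geq 2$, $b^l R^{-l/2} = R^{l/(1+r_1) - l/2} \gg 1$; you then lose the $\leq 1$ bound that makes the counting step work.

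Second, and more seriously, your counting step contains a genuine gap. You assert the number of distinct primitive $\vect{v}_{I_q}$ is $\ll R^{9l/(20n)}$, justified by "the generic count $R^{(1-\frac{1}{10n})l}$ multiplicatively improved by the extra $R^{-l/2}$-contraction." But that product is $R^{(\frac12 - \frac{1}{10n})l}$, which for every $n \geq 2$ strictly exceeds $R^{9l/(20n)}$ — so the bound you derive is weaker than the bound you need, and the final multiplication then fails. The paper takes a different and cleaner route that sidesteps any box-covering: once $\|g_{\vect{r}}(-l/2)g(1)\vect{v}(M)\| \leq 1$ is established, each $\vect{v}(M)$ corresponds to a primitive $i$-dimensional sublattice of $g_{\vect{r}}(q-l/2)U(\varphi(x(1)))\Z^{n+1}$ with discriminant $\leq 1$, and since $x(1)$ lies in some $I_{q-l/2}\in\mathcal{I}_{q-l/2}$, that lattice sits in $K_{\kappa}$, so Proposition~\ref{prop:counting-sublattices} immediately gives $L \leq \kappa^{-N} = R^{Nm}$ — a bound depending only on $m$, not on $l$, and far smaller than $R^{9l/(20n)}$ because $l \geq 2000n^2Nm$. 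This direct application of the sublattice count in a fixed compact piece is the heart of the argument; no adaptation of the box-covering from Proposition~\ref{prop:count-dangerous-intervals} to $\bigwedge^i V$ is needed, and indeed no such adaptation gives a count as sharp as $R^{Nm}$.
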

\begin{proof}
\par Let us fix a closed subinterval $J \subset I_p$ of length $R^{-q + (1+ \frac{1}{2n})l}$.
\par For any $s \in I_q \in \mathcal{D}_{q,p}(I_p, i)$, there exists $\vect{v} = \vect{v}_1 \wedge \cdots \wedge \vect{v}_i \in \bigwedge^i \Z^{n+1}\setminus \{\vect{0}\}$ such that 
\[ \max \{ \|g_{\vect{r}}(q) U(\bphi(s'))\vect{v}\| : s' \in [ s - R^{-q+l}, s + R^{-q + l}] \} < \rho^i.\]
\par Let us denote the interval $[s - R^{-q+l}, s + R^{-q+l}]$ by $\Delta_{q,l}(\vect{v}, i)$. Then every $I_q \in \mathcal{D}_{q,l}(I_p,i)$ is contained in some $\Delta_{q,l}(\vect{v},i)$ and every $\Delta_{q,l}(\vect{v}, i)$ contains at most $O(R^l)$ different $I_q \in \mathcal{D}_{q,l}(I_p, i)$.
\par We will follow the notation used in the proof of Lemma \ref{lm:key-lemma}. Let $g = g_{\vect{r}}(q)U(\bphi(s))$, $\vect{h} = \mathfrak{k} \cdot \vect{e}_1$ and 
\[z(\mathfrak{k}) = \begin{bmatrix} 1 & \\ & \mathfrak{k} \end{bmatrix} \in Z\]
be as in the proof of Lemma \ref{lm:key-lemma}. For $j = 1 , \dots, i$, let us write 
\begin{align*}
g\vect{v}_j & = a_+(j)\vect{w}_+ + a_1(j) z(\mathfrak{k})\vect{w}_1 + \vect{w}'(j) \\
            & = a_+(j)\vect{w}_+ + \vect{w}(j) 
\end{align*}
where $\vect{w}'(j) \in z(\mathfrak{k})W_2$ and $\vect{w}(j) = a_1(j) z(\mathfrak{k})\vect{w}_1 + \vect{w}'(j) \in W$. Then 
\begin{align*}
g \vect{v} & =  \vect{w}_+ \wedge (z(\mathfrak{k})\vect{w}_1) \wedge \left( \sum_{j <j'} \epsilon_{+,1}(j,j') a_+(j) a_1(j') \bigwedge_{k \neq j,j'} \vect{w}'(k) \right) \\ 
	            &  + \vect{w}_+ \wedge \left( \sum_{j=1}^i \epsilon_+(j) a_+(j) \bigwedge_{k \neq j} \vect{w}'(k) \right)  \\ 
	            &  + (z(\mathfrak{k})\vect{w}_1)\wedge \left( \sum_{j=1}^i \epsilon_1(j) a_1(j) \bigwedge_{k \neq j} \vect{w}'(k) \right) + \bigwedge_{j=1}^i \vect{w}'(j) .
\end{align*}
 By Lemma \ref{lm:key-lemma}, we have that 
 \[ \left\| (z(\mathfrak{k})\vect{w}_1)\wedge \left( \sum_{j=1}^i \epsilon_1(j) a_1(j) \bigwedge_{k \neq j} \vect{w}'(k) \right) \right\| \leq \rho^i R^{-l}\]
 and 
 \[
 \left\| \bigwedge_{j=1}^i \vect{w}'(j) \right\| \leq \rho^i R^{-l/2}.
 \]
 Let us take the collection of all possible $\Delta_{q,l}(\vect{v}, i)$'s intersecting $J$, say 
 \[ \{  \Delta_{q,l}(\vect{v}(M), i) = [ s(M) - R^{-q +l}, s(M) + R^{-q + l}] : M = 1, \dots, L \}. \]
For simplicity, let us denote $g(M) = g_{\vect{r}}(q) U(\bphi(s(M)))$ for $M = 1,\dots, L$. Since $\bphi(J)$ can be approximated by its linear part, we have that the corresponding $\vect{h}$ and $\mathfrak{k}$ for $s(M)$ is the same for $M = 1, \dots, L$. Then
\[ 
g(M) \vect{v}(M)  =  \vect{w}_+ \wedge \vect{w}^{(i-1)}(M)  + (z(\mathfrak{k})\vect{w}_1) \wedge (\vect{w}')^{ (i-1)}(M) + \vect{w}^{(i)}(M)	          
\]
where $\vect{w}^{(i-1)}(M) \in \bigwedge^{i-1}W$, $(\vect{w}')^{ (i-1)}(M) \in \bigwedge^{i-1} z(\mathfrak{k})W_2$ and $\vect{w}^{(i)}(M) \in \bigwedge^{i} z(\mathfrak{k})W_2$. By our previous discussion, we have that 
\[ \left\| \vect{w}_+ \wedge \vect{w}^{(i-1)}(M) \right\| < \rho^i, \]
\[ \|(\vect{w}')^{ (i-1)}(M) \| =  \left\| (z(\mathfrak{k})\vect{w}_1) \wedge (\vect{w}')^{ (i-1)}(M) \right\| \leq \rho^i R^{-l},\]
and 
\[ \left\| \vect{w}^{(i)}(M) \right\| \leq \rho^i R^{-l/2}.\]
Now let us consider $g(1) \vect{v}(M)$. Let us write $s(1) - s(M) = r R^{-q + (1 + \frac{1}{2n})l}$ where $r \in [ -1, 1]$. By our previous discussion, we have that 
\begin{align*}
g(1) = g_{\vect{r}}(q) U(\bphi(s(1))) & =  U(O(1)) U(r R^{(1+ \frac{1}{2n})l} \vect{h}) g_{\vect{r}}(q) U(\bphi(s(M))) \\ 
           & =  U(O(1)) U(r R^{(1+ \frac{1}{2n})l} \vect{h}) g(M).
\end{align*}
Therefore, we have that 
\[
	g(1)\vect{v}(M)   =  U(O(1)) U(r R^{(1+ \frac{1}{2n})l} \vect{h}) g(M) \vect{v}(M).
\]
\par It is easy to see that we can ignore the contribution of $U(O(1))$ and identify $g(1)\vect{v}(M)$ with 
$U(r R^{(1+ \frac{1}{2n})l} \vect{h}) g(M) \vect{v}(M)$. Then we have that 
	\begin{align*}
	g(1)\vect{v}(M) & =  U(r R^{(1+ \frac{1}{2n})l} \vect{h}) g(M) \vect{v}(M) \\
	 & =  \vect{w}_+ \wedge \vect{w}^{(i-1)}(M) + r R^{(1+\frac{1}{2n})l} \vect{w}_+ \wedge (\vect{w}')^{ (i-1)}(M) \\ 
	 &  + (z(\mathfrak{k})\vect{w}_1) \wedge (\vect{w}')^{ (i-1)}(M) + \vect{w}^{(i)}(M) .
	\end{align*}
\par Now let us look at the range of 
\[g_{\vect{r}}(-l/2) g(1) \vect{v}(M) = g_{\vect{r}}(q-l/2) U(\bphi(s(1))) \vect{v}(M).\]
It is easy to see that $g_{\vect{r}}(-l/2)\vect{w}_+ = b^{-l/2} \vect{w}_+$, $\|g_{\vect{r}}(-l/2)z(\mathfrak{k})\vect{w}_1\| \leq b^{r_1 l/2} \|z(\mathfrak{k})\vect{w}_1\|$,
\[\|g_{\vect{r}}(-l/2)\vect{w}^{(i-1)}(M)\| \leq b^{l/2} \|\vect{w}^{(i-1)}(M)\|,\]
\[ \|g_{\vect{r}}(-l/2)(\vect{w}')^{ (i-1)}(M) \| \leq b^{(1-r_1)l/2} \|(\vect{w}')^{ (i-1)}(M) \|, \]
and 
\[ \|g_{\vect{r}}(-l/2) \vect{w}^{(i)}(M)\| \leq b^{l/2} \| \vect{w}^{(i)}(M)\|. \]
Since 
	\begin{align*}
	g_{\vect{r}}(-l/2) g(1) \vect{v}(M) & =  b^{-l/2} \vect{w}_+ \wedge (g_{\vect{r}}(-l/2)\vect{w}^{(i-1)}(M)) \\ &  +
	r R^{(1+\frac{1}{2n})l} b^{-l/2} \vect{w}_+ \wedge(g_{\vect{r}}(-l/2) (\vect{w}')^{ (i-1)}(M) ) \\
	&  + (g_{\vect{r}}(-l/2)z(\mathfrak{k})\vect{w}_1) \wedge (g_{\vect{r}}(-l/2)(\vect{w}')^{ (i-1)}(M)) \\
	& + g_{\vect{r}}(-l/2) \vect{w}^{(i)}(M),
	\end{align*}
we have that 
	\begin{align*}
	\|g_{\vect{r}}(-l/2) g(1) \vect{v}(M)\| & \leq  b^{-l/2} \|\vect{w}_+ \wedge (g_{\vect{r}}(-l/2)\vect{w}^{(i-1)}(M))\| \\
	&  + R^{(1+\frac{1}{2n})l} b^{-l/2} \|\vect{w}_+ \wedge(g_{\vect{r}}(-l/2) (\vect{w}')^{ (i-1)}(M) )\| \\
	&  + \|g_{\vect{r}}(-l/2)z(\mathfrak{k})\vect{w}_1\| \cdot \|g_{\vect{r}}(-l/2)(\vect{w}')^{ (i-1)}(M)\| \\
	&  + \|g_{\vect{r}}(-l/2) \vect{w}^{(i)}(M)\|  \\
	& \leq b^{-l/2} b^{l/2} \|\vect{w}^{(i-1)}(M))\| +  R^{(1+\frac{1}{2n})l} b^{-l/2}b^{(1-r_1)l/2} \|(\vect{w}')^{ (i-1)}(M) \| \\
	&  + b^{r_1 l/2} \|z(\mathfrak{k})\vect{w}_1\| \cdot b^{(1-r_1)l/2} \|(\vect{w}')^{ (i-1)}(M) \| + b^{l/2}\| \vect{w}^{(i)}(M)\| \\
	& \leq  b^{-l/2} b^{l/2} \rho^i + R^{(1+\frac{1}{2n})l} b^{-l/2}b^{(1-r_1)l/2} \rho^i R^{-l} \\
	& + b^{r_1 l/2} b^{(1-r_1)l/2} \rho^i R^{-l} + b^{l/2} \rho^i R^{-l/2} \\
	& \leq  \rho^i + \rho^i + \rho^i R^{-l/2} + \rho^i \leq 1.
	\end{align*}
For $M = 1, \dots, L$, let $\Lambda_i(\vect{v}(M))$ denote the $i$-dimensional primitive sublattice of $\Z^{n+1}$ corresponding to $\vect{v}(M)$. We will apply Proposition \ref{prop:counting-sublattices} to estimate $L$. Thus, let us keep the notation used there. By the inequality above, we have that $g_{\vect{r}}(-l/2)g(1)\Lambda_i(\vect{v}(M)) \in \mathcal{C}_i(g_{\vect{r}}(-l/2)g(1)\Z^{n+1}, 1)$ for every $M = 1, \dots, L$. On the other hand, since 
$x(1) \in I_q \in \hat{\mathcal{I}}_q$, we have that 
\[ g_{\vect{r}}(-l/2)g(1)\Z^{n+1} = g_{\vect{r}}(q - l/2) U(\bphi(s(1)))\Z^{n+1} \in K_{\kappa}. \]
By Proposition \ref{prop:counting-sublattices}, we have that 
\[ L \leq  \sharp\mathcal{C}_i(g_{\vect{r}}(-l/2)g(1)\Z^{n+1}, 1) \leq \kappa^{-N} = R^{Nk}.\]
Therefore, we have that 
\begin{align*} 
m(D_{q,p}(I_p, i) \cap J) & \leq  L R^{-q+ l} \leq R^{-q + l + N k} \\ 
                          & \leq   R^{-q + l + \frac{l}{100 n}} \leq R^{- \frac{l}{20n}} R^{-q + (1 + \frac{1}{2n})l} =  R^{- \frac{l}{20n}} m(J).
\end{align*} 
\par This completes the proof.
\end{proof}
\par Lemma \ref{lm:2nd-case-key-lemma} easily implies the following:
\begin{corollary}
\label{cor:2nd-case-key-lemma}
Let us keep the notation as above. Then 
\[m(D_{q, p} (I_p, i)) \ll R^{-\frac{l}{20n}} m(I_p).\]
\end{corollary}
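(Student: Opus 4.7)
The plan is to deduce Corollary \ref{cor:2nd-case-key-lemma} directly from Lemma \ref{lm:2nd-case-key-lemma} by a covering argument. Since $I_p$ has length $R^{-p} = R^{-q+2l}$ and the scale in Lemma \ref{lm:2nd-case-key-lemma} is $R^{-q+(1+\frac{1}{2n})l}$, which is strictly smaller than $R^{-q+2l}$ (because $l>0$ and $1+\frac{1}{2n} < 2$), the interval $I_p$ can be partitioned into closed subintervals of length $R^{-q+(1+\frac{1}{2n})l}$.

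More precisely, I would choose a finite collection $\{J_1, \dots, J_T\}$ of closed subintervals of $I_p$, each of length exactly $R^{-q+(1+\frac{1}{2n})l}$, with pairwise disjoint interiors, such that their union covers $I_p$ (up to boundary overlaps from the leftover piece at the end, which can be absorbed by a slight enlargement of the last interval without loss since we work in measure). The number of such subintervals satisfies
\[
T \ll \frac{m(I_p)}{R^{-q+(1+\frac{1}{2n})l}} = \frac{R^{-q+2l}}{R^{-q+(1+\frac{1}{2n})l}} = R^{(1-\frac{1}{2n})l}.
\]

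Applying Lemma \ref{lm:2nd-case-key-lemma} to each $J_t$ gives $m(D_{q,p}(I_p,i) \cap J_t) \ll R^{-l/(20n)} m(J_t)$. Summing over $t = 1, \dots, T$ and using that the $J_t$'s cover $I_p$,
\[
m(D_{q,p}(I_p, i)) \leq \sum_{t=1}^T m(D_{q,p}(I_p,i) \cap J_t) \ll R^{-l/(20n)} \sum_{t=1}^T m(J_t) \ll R^{-l/(20n)} m(I_p),
\]
which is the desired bound.

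There is essentially no obstacle here; the only point requiring minor care is the boundary: to apply Lemma \ref{lm:2nd-case-key-lemma} one needs $J \subset I_p$, so one should verify that a clean partition of $I_p$ into closed intervals of length $R^{-q+(1+\frac{1}{2n})l}$ is possible (or, alternatively, slightly enlarge the last piece and note that the constant in the bound is unaffected). All the real work has already been carried out in the proof of Lemma \ref{lm:2nd-case-key-lemma} through the linearization technique and the sublattice counting in Proposition \ref{prop:counting-sublattices}.
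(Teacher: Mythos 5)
Your proposal is correct and takes exactly the same approach as the paper's (one-line) proof: partition $I_p$ into subintervals of length $R^{-q+(1+\frac{1}{2n})l}$, apply Lemma \ref{lm:2nd-case-key-lemma} to each, and sum. The explicit counting of subintervals and the remark about the boundary piece are harmless elaborations of what the paper leaves implicit.
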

\begin{proof}
\par The statement follows from Lemma \ref{lm:2nd-case-key-lemma} by dividing $I_p$ into subintervals of length $R^{-q + (1 + \frac{1}{2n})l}$.
\end{proof}
\par Now we are ready to prove Proposition \ref{prop:small-portion-dangerous-case}.
\begin{proof}[Proof of Proposition \ref{prop:small-portion-dangerous-case}]
\par Let us fix $I_p \in \mathcal{I}_p$. For every $l/2 \leq l' \leq l$, let us denote by $D_{q,l'}(I_p)$ denote the union of $(q, l')$-dangerous intervals intersecting $I_p$. By Proposition \ref{prop:count-dangerous-intervals}, we have that $m(D_{q,l'}(I_p)) = O\left(R^{-\frac{l'}{10n}}\right)m(I_p)$. Therefore, we have that 
\begin{align*}
	m\left( \bigcup_{l/2 \leq l' \leq l} D_{q,l'}(I_p) \right) & \leq  \sum_{l/2 \leq l' \leq l} m( D_{q,l'}(I_p)) \\ 
	& \ll  \sum_{l/2 \leq l' \leq l} R^{-\frac{l'}{10n}} m(I_p) \\ 
	& \ll  R^{-\frac{l}{20n}} m(I_p).
\end{align*}
By Corollary \ref{cor:2nd-case-key-lemma}, we have that 
\begin{align*} 
m \left(\bigcup_{i = 2}^{n} D_{q, p} (I_p, i)\right) & \leq \sum_{i = 2}^{n} m(D_{q, p} (I_p, i)) \\
                                                       & \ll \sum_{i=2}^{n} R^{-\frac{l}{20n}} m(I_p) \ll R^{-\frac{l}{20n}} m(I_p)
 \end{align*}
\par By Lemma \ref{lm:key-lemma}, we have that 
\[I_q \subset \bigcup_{l/2 \leq l' \leq l} D_{q,l'}(I_p)\cup\bigcup_{i = 2}^{n} D_{q, p} (I_p, i) \] 
for any $I_q \in \hat{\mathcal{I}}_{q,p}$.
Therefore, we have that 
\begin{align*}
 F(\hat{\mathcal{I}}_{q,p}, I_p) R^{-q} & \leq  m\left( \bigcup_{l/2 \leq l' \leq l} D_{q,l'}(I_p)\bigcup_{i = 2}^{n} D_{q, p} (I_p, i) \right) \\
               & \leq  m\left( \bigcup_{l/2 \leq l' \leq l} D_{q,l'}(I_p) \right) + m \left(\bigcup_{i = 2}^{n} D_{q, p} (I_p, i)\right) \\
               & \ll  R^{-\frac{l}{20n}} m(I_p) = R^{-p - \frac{l}{20n}}.
 \end{align*}
 This proves that 
 \[ F(\hat{\mathcal{I}}_{q,p}, I_p) \ll R^{q-p - \frac{l}{20n}}. \]
\end{proof}
\par By Proposition \ref{prop:small-portion-dangerous-case}, we have that 
\begin{align}
\label{equ:dangerous-case}
 \sum_{l = 2000 n^2 N k}^{2\eta' q} \left(\frac{4}{R}\right)^{2l} \max_{I_{q- 2l} \in \mathcal{I}_{q- 2l}} F(\hat{\mathcal{I}}_{q, q - 2l}, I_{q - 2l}) &\ll  \sum_{l = 2000 n^2 N k}^{2\eta' q} \left(\frac{4}{R}\right)^{2l} R^{2l - \frac{l}{20n}} \\
  & \leq  \sum_{l = 2000 n^2 N k}^{2\eta' q} \left( \frac{16}{1000}\right)^l \ll \left( \frac{16}{1000}\right)^{2000 n^2 N k}.
\end{align}
From this it is easy to see that 
\begin{equation}
\label{equ:dangerous-case-final}
\sum_{l = 2000 n^2 N k}^{2\eta' q} \left(\frac{4}{R}\right)^{2l} \max_{I_{q- 2l} \in \mathcal{I}_{q- 2l}} F(\hat{\mathcal{I}}_{q, q - 2l}, I_{q - 2l}) \to 0
\end{equation}
as $k \to \infty$.

\subsection{Extremely dangerous case}
\label{subsec-extremely-dangerous-case}
\par In this subsection we will estimate $F(\hat{\mathcal{I}}_{q,0}, I)$. We call this case {\bf the extremely dangerous case}.
\begin{proposition}
\label{prop:small-portion-extremely-dangerous}
There exists a constant $\nu >0$ such that for any $q > 10^6 n^4 N k$, we have that 
\[ F(\hat{\mathcal{I}}_{q,0}, I) \ll R^{(1-\nu) q}. \]
\end{proposition}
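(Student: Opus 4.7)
The plan is to decompose $\hat{\mathcal{I}}_{q,0} = \bigcup_{i=1}^n \hat{\mathcal{I}}_{q,0}(i)$ and bound $F(\hat{\mathcal{I}}_{q,0}(i), I)$ for each wedge degree $i$ separately. The case $i=1$ will follow essentially from Proposition \ref{prop:count-extremely-dangerous-intervals}, and I will reduce the cases $i \geq 2$ to it via the dichotomy underlying Lemma \ref{lm:key-lemma}.

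For $i=1$, Definition \ref{def:finer-partition-Iq} forces every $I_q \in \hat{\mathcal{I}}_{q,0}(1)$ to be contained in a $q$-extremely dangerous interval $\Delta_q(\vect{a})$ for some $\vect{a} \in \Z^{n+1} \setminus \{\vect{0}\}$, and hence in the set $E_q$ of Proposition \ref{prop:count-extremely-dangerous-intervals}. That proposition gives $m(E_q) \leq K_0 (\rho^{n+1} b^{-\eta q})^{\alpha}$; together with $b^{1+r_1} = R$ and $r_1 \leq 1$ (so that $b \geq R^{1/2}$), this yields
\[ F(\hat{\mathcal{I}}_{q,0}(1), I) \leq R^q m(E_q) \ll R^{(1-\eta\alpha/2)q}. \]

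For $i \geq 2$, I will run the dichotomy of Lemma \ref{lm:key-lemma} (suitably adapted) with $l = 2\eta' q$, noting that $R^{-q+l} = R^{-q(1-2\eta')}$ is exactly the half-length appearing in Definition \ref{def:finer-partition-Iq}. For each $I_q \in \hat{\mathcal{I}}_{q,0}(i)$ this should produce one of two alternatives: either (a) $I_q \subset \Delta_{q,l'}(\vect{v}'_1)$ for some $l' \geq \eta' q$ and $\vect{v}'_1$ the first Minkowski reduced basis vector of the sublattice associated to $\vect{v}$, in which case $\Delta_{q,l'}(\vect{v}'_1)$ is itself $q$-extremely dangerous and its contribution is absorbed into the $i=1$ bound; or (b) the decomposition $g_{\vect{r}}(q) U(\varphi(x)) \vect{v} = \vect{w}_+ \wedge \vect{w}^{(i-1)} + \vect{w}^{(i)}$ satisfies $\|\vect{w}^{(i)}\| \leq \rho^i R^{-\eta' q}$. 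Since $U(\varphi(x))$ preserves the pure $\bigwedge^i W$ part (because $U(\varphi(x))\vect{w}_j = \vect{w}_j + \varphi_j(x)\vect{w}_+$ and the $\vect{w}_+ \wedge \vect{w}_+$ terms vanish), this translates to $\|g_{\vect{r}}(q) \vect{v}^{(i)}\| \leq \rho^i R^{-\eta' q}$, where $\vect{v}^{(i)} \in \bigwedge^i \Z^n$ is the $\bigwedge^i W$-component of the integer wedge $\vect{v}$.

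The hard step will be case (b). My plan is to combine Proposition \ref{prop:counting-sublattices} applied to $g_{\vect{r}}(q-\eta' q) U(\varphi(x))\Z^{n+1} \in K_{\kappa}$, which bounds the number of admissible integer wedge vectors by $\kappa^{-N} = R^{Nm}$, with a Bernik--Kleinbock--Margulis-type measure estimate in the spirit of Theorem \ref{thm:bernik-kleinbock-margulis} applied to the polynomial map $x \mapsto g_{\vect{r}}(q) U(\varphi(x)) \vect{v}$, which for each fixed $\vect{v}$ should bound by $\ll R^{-\eta \alpha q/(2(1+r_1))}$ the measure of $x$ at which the wedge norm stays $\leq \rho^i$ throughout the long interval. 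Because $q > 10^6 n^4 N m$, the counting factor $R^{Nm}$ is dominated by the BKM gain, yielding $F(\hat{\mathcal{I}}_{q,0}(i), I) \ll R^{(1-\nu) q}$ for some $\nu > 0$. The most delicate point will be extracting, from the combined constraints on $\vect{w}^{(i)}$ and $\vect{w}^{(i-1)}$ together with the Plücker relations encoding decomposability of $\vect{v}$, a single polynomial of degree bounded independently of $q$ whose small-value set BKM actually controls; this is where the interplay between the wedge structure and the unipotent dynamics is most subtle.
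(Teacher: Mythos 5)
Your outline of the proof is structurally right for the easy parts: the decomposition $\hat{\mathcal{I}}_{q,0} = \bigcup_i \hat{\mathcal{I}}_{q,0}(i)$, the observation that the $i=1$ contributions are covered by $E_q$ and hence by Theorem~\ref{thm:bernik-kleinbock-margulis}/Proposition~\ref{prop:count-extremely-dangerous-intervals}, and the nice remark that the $\bigwedge^i W$-component of $g_{\vect{r}}(q)U(\varphi(x))\vect{v}$ equals $g_{\vect{r}}(q)\vect{v}^{(i)}$ independently of $x$ (since $U$ is unipotent and fixes $\vect{w}_+$). The paper proceeds in the same spirit, with Lemma~\ref{lm:key-lemma-extremely-dangerous} encoding exactly your dichotomy (a)/(b).

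However, your plan for case (b), which is where the whole weight of the proof sits, has a genuine gap. You want to combine the counting bound $\kappa^{-N}=R^{Nm}$ on the number of admissible $\vect{v}$ with ``a Bernik--Kleinbock--Margulis-type measure estimate in the spirit of Theorem~\ref{thm:bernik-kleinbock-margulis} applied to the polynomial map $x\mapsto g_{\vect{r}}(q)U(\varphi(x))\vect{v}$.'' No such estimate exists for wedge degree $i\ge 2$: Theorem~\ref{thm:bernik-kleinbock-margulis} is the $i=1$ statement, and its mechanism (control both $|f(x)|$ and $|f'(x)|$ for a single linear form $f=a_0+\sum a_j\varphi_j$) does not carry over to Pl\"ucker coordinates, where the relevant polynomials are degree $i$ in the $\varphi_j$ and the short-vector condition is a coupled system, not a bound on one scalar. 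You identify this yourself as ``the most delicate point,'' but it is not a finishing detail; without it your argument does not close. Moreover, even if you had such a per-$\vect{v}$ estimate, you would still need to handle the full range of scales $l\geq\eta'q$, not just $l\sim\eta'q$.

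The paper sidesteps the missing BKM estimate entirely, and this is the main idea you should internalize. In Lemma~\ref{lm:key-lemma-extremely-dangerous-2nd-case}, the range $\eta'q\le l\le 2\eta'q$ is handled by the counting argument from Lemma~\ref{lm:2nd-case-key-lemma}. For $l>2\eta'q$, the paper conjugates by the auxiliary one-parameter diagonal $g^{\eta}(t)=\diag(b^{-\eta t}, b^{\eta t/n}\I_n)$: because $\vect{w}_+\wedge\vect{w}^{(i-1)}$ contracts under $g^{\eta}(q)$ while $\vect{w}^{(i)}$ expands only by $b^{\eta q}$ (which the hypothesis $\|\vect{w}^{(i)}\|\le\rho^iR^{-2\eta'q}$ more than compensates for), the whole wedge $g_{\vect{r},\eta}(q)U(\varphi(x))\vect{v}$ has norm $\le R^{-\eta'q/n}\rho^i$. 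By Minkowski the lattice $g_{\vect{r},\eta}(q)U(\varphi(x))\Z^{n+1}$ then has a vector of length $\ll R^{-\eta'q/n^2}$, so the whole lattice sits in a thin cusp neighborhood; Theorem~\ref{thm:shah-sln-representations} verifies the hypothesis of Theorem~\ref{thm:non-divergence-kleinbock-margulis}, which then gives a measure bound $\ll R^{-\alpha\eta'q/n^2}$ with no need for any $\vect{v}$-counting at all. Degenerate weights (some $r_j<\eta/n$) are handled by showing the short vector must live in the $\SL(n_1)$-invariant block and then reducing to lower dimension by induction. This route is both more robust (it only uses the standard nondivergence theorem, which is already formulated for all exterior powers) and cleaner than trying to prove a degree-$i$ BKM statement.
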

\par Similarly to Lemma \ref{lm:key-lemma}, we have the following:
\begin{lemma}
\label{lm:key-lemma-extremely-dangerous}
\par For any $i = 1, \dots, n$ and $I_q \in \hat{\mathcal{I}}_{q,0}(i)$, one of the following two cases holds:
\begin{enumerate}[label=\textit{Case \arabic*.}]
 \item there exists a $q$-extremely dangerous interval $\Delta_q(\vect{a})$ such that $I_q \in \Delta_q(\vect{a})$;
 \item there exists $\vect{v} = \vect{v}_1 \wedge \cdots \wedge \vect{v}_i \in \bigwedge^i \Z^{n+1} \setminus \{\vect{0}\}$ such that the following holds:
for any $s \in I_q$, if we write 
\[g_{\vect{r}}(q)U(\bphi(s))\vect{v} = \vect{w}_+ \wedge \vect{w}^{(i-1)} + \vect{w}^{(i)} \]
where $\vect{w}^{(i-1)} \in \bigwedge^{i-1} W$ and $\vect{w}^{(i)} \in \bigwedge^i W$, then $\|\vect{w}_+ \wedge \vect{w}^{(i-1)}\| \leq \rho^i$ and $\|\vect{w}^{(i)}\| \leq \rho^i R^{-\eta' q}$.
\end{enumerate}
\end{lemma}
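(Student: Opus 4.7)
The plan is to mirror the proof of Lemma \ref{lm:key-lemma} with the parameter $l$ systematically replaced by $l_0 := 2\eta' q$, the interval $[x - R^{-q+l}, x+R^{-q+l}]$ by $[x - R^{-q(1-2\eta')}, x+R^{-q(1-2\eta')}] = [x - R^{-q+l_0}, x+R^{-q+l_0}]$, and ``$(q,l')$-dangerous'' by ``$q$-extremely dangerous''. The hypothesis $I_q \in \hat{\mathcal{I}}_{q,0}(i)$ provides a decomposable vector $\vect{v} = \vect{v}_1 \wedge \cdots \wedge \vect{v}_i \in \bigwedge^i \Z^{n+1}\setminus \{\vect{0}\}$ and a point $x \in I_q$ with $\|g_{\vect{r}}(q)U(\varphi(x'))\vect{v}\| \leq \rho^i$ throughout $[x - R^{-q+l_0}, x + R^{-q+l_0}]$; I write $g := g_{\vect{r}}(q)U(\varphi(x))$ for brevity.

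For $i = 1$, I will show the first alternative always holds. Define
\[\psi(y) := \max\{\|g_{\vect{r}}(q)U(\varphi(x'))\vect{v}\| : x' \in [x-y, x+y] \cap I\}.\]
This is continuous and non-decreasing in $y$, with $\psi(R^{-q+l_0}) \leq \rho$; by Theorem \ref{thm:shah-sln-representations} and the choice $\rho^{n+1} < c$ made earlier, $\psi(|I|) \geq c\|\vect{v}\| \geq c > \rho$. Hence there exists $y^* \in [R^{-q+l_0}, |I|]$ with $\psi(y^*) = \rho$; writing $y^* = 2R^{-q+l'}$ gives $l' \geq l_0 - \log_R 2 = 2\eta' q - \log_R 2 > \eta' q$ (using $q > 10^6 n^4 N m$), so $\Delta_q(\vect{v})$ is the required extremely dangerous interval containing $I_q$.

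For $i \geq 2$, I will apply the $\SL(2, \vect{h})$-representation computation from the proof of Lemma \ref{lm:key-lemma} verbatim, with $l$ replaced by $l_0$. Assuming the sublattice $L_i \subset \Z^{n+1}$ generated by $\vect{v}_1, \dots, \vect{v}_i$ is primitive, I choose a Minkowski-reduced basis $\vect{v}'_1, \dots, \vect{v}'_i$ of $L_i$; by Minkowski's theorem \eqref{equ:minkowski-thm}, $\|g\vect{v}'_1\| \ll \rho$ from $d(\Lambda_i) = \|g\vect{v}\| \leq \rho^i$. Writing $g\vect{v}'_j = a_+(j)\vect{w}_+ + a_1(j)z(\mathfrak{k})\vect{w}_1 + \vect{w}'(j)$ with $\vect{w}'(j) \in z(\mathfrak{k})W_2$, the bound $\|U(rR^{l_0}\vect{h})g\vect{v}\| \leq \rho^i$ for $|r|\leq 1$ forces
\[\left\|\sum_j \epsilon_1(j) a_1(j) \bigwedge_{k \neq j} \vect{w}'(k)\right\| \leq \rho^i R^{-l_0}.\]
The case split is on $|a_1(1)|$. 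If $|a_1(1)| \leq R^{-l_0/2} = R^{-\eta' q}$, then repeating the computation in the proof of Proposition \ref{prop:count-dangerous-intervals} gives $\|g_{\vect{r}}(q)U(\varphi(x'))\vect{v}'_1\| \leq \rho$ on the larger interval $[x - R^{-q+\eta' q}, x + R^{-q+\eta' q}]$, and the $i = 1$ argument applied to $\vect{v}'_1$ produces an extremely dangerous interval $\Delta_q(\vect{v}'_1) \supset I_q$. Otherwise $|a_1(1)| > R^{-\eta' q}$, and the identity
\[a_1(1) \bigwedge_j \vect{w}'(j) = \vect{w}'(1) \wedge \sum_j \epsilon_1(j) a_1(j)\bigwedge_{k\neq j}\vect{w}'(k),\]
together with $\|\vect{w}'(1)\| \leq \|g\vect{v}'_1\| \ll \rho$, gives $\|\bigwedge_j \vect{w}'(j)\| \ll \rho^{i+1} R^{-l_0+\eta' q} \leq \rho^i R^{-\eta' q}$. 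Since
\[\vect{w}^{(i)} = (z(\mathfrak{k})\vect{w}_1) \wedge \left(\sum_j \epsilon_1(j) a_1(j)\bigwedge_{k \neq j}\vect{w}'(k)\right) + \bigwedge_j \vect{w}'(j),\]
this yields $\|\vect{w}^{(i)}\| \leq \rho^i R^{-\eta' q}$, while $\|\vect{w}_+ \wedge \vect{w}^{(i-1)}\| \leq \|g\vect{v}\| \leq \rho^i$ is automatic. The ``for any $x \in I_q$'' clause follows since $|I_q| = R^{-q}$ is negligible compared with $R^{-q+l_0}$: shifting the basepoint within $I_q$ moves the relevant outer interval by a negligible amount, so the same $\vect{v}$ witnesses the required bounds throughout $I_q$.

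The hard part will be tracking the constants carefully in the first alternative, to ensure the $l'$ produced by the monotonicity argument strictly exceeds $\eta' q$ after accounting for the factor $2$ separating the inner and outer intervals in Definition \ref{def:dangerous-interval} and the implicit $\ll$-constants throughout. These can be absorbed into the global choice of $\rho$, whose smallness I am free to adjust further if needed; the large lower bound $q > 10^6 n^4 N m$ ensures $\eta' q$ dominates any additive constants like $\log_R 2$ arising in the calculation.
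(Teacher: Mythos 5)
Your proposal is correct and carries out exactly the adaptation the paper intends — the paper's proof of this lemma is the single sentence ``The proof is the same as the proof of Lemma \ref{lm:key-lemma}'', and your systematic substitution $l \mapsto 2\eta' q$ implements that. The two points you spell out that the paper leaves implicit are genuinely needed: the definition of $\hat{\mathcal{I}}_{q,0}(i)$ in Definition \ref{def:finer-partition-Iq} has no built-in lower bound on $\max\|g_{\vect{r}}(q)U(\varphi(\cdot))\vect{v}\|$ (unlike $\hat{\mathcal{I}}_{q,p}(i)$), so Theorem \ref{thm:shah-sln-representations} is required to supply the large-radius endpoint of the intermediate-value argument; and the ``for any $x\in I_q$'' clause (only ``there exists $x$'' in Lemma \ref{lm:key-lemma}) holds because multiplication by $U(\cdot)$ fixes the $\bigwedge^i W$-component, so $\vect{w}^{(i)}$ is unchanged as $x$ ranges over $I_q$.
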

\begin{proof}
\par The proof is the same as the proof of Lemma \ref{lm:key-lemma}. In fact, the argument in the proof of Lemma \ref{lm:key-lemma} works for $l= 2\eta' q$ and thus concludes the statement.
\end{proof}

 \begin{definition}
 \label{def:extremely-dangerous-second-case}
 \par For $ i = 2, \dots, n$, let $\mathcal{D}_q(i) $ denote the collection of $I_q \in \hat{\mathcal{I}}_{q,0}(i)$ such that the second case in Lemma \ref{lm:key-lemma-extremely-dangerous} holds and let 
\[ D_q(i) := \bigcup_{I_q \in \mathcal{D}_q(i)} I_q .\]
Moreover, for $I_q \in \mathcal{D}_q(i)$, let $\vect{v} =\vect{v}_1 \wedge \cdots \wedge \vect{v}_i \in \bigwedge^i \Z^{n+1} \setminus \{\vect{0}\}$ be the vector given in the second case of Lemma \ref{lm:key-lemma-extremely-dangerous}. Then for $s \in I_q$, we can write 
\[g_{\vect{r}}(q)U(\bphi(s))\vect{v} = \vect{w}_+ \wedge \vect{w}^{(i-1)} + \vect{w}^{(i)} \]
as in the second case of Lemma \ref{lm:key-lemma-extremely-dangerous}. For $l \geq \eta' q$, let $\mathcal{D}'_{q,l}(i)$ denote the collection of $I_q \in \mathcal{D}_q(i)$ such that 
\[ \rho^i R^{-l+1} \leq \|\vect{w}^{(i)}\| \leq \rho^i R^{-l},\]
and let 
\[D'_{q,l}(i) : = \bigcup_{I_q \in \mathcal{D}'_{q,l}(i)} I_q .\]
\end{definition}
\begin{lemma}
\label{lm:key-lemma-extremely-dangerous-2nd-case}
There exists a constant $\nu >0$ such that for any $q > 10^6 n^4 N k$ and any $i= 2, \dots, n$, we have that 
\[ m(D_q(i)) \ll R^{- \nu q}. \]
\end{lemma}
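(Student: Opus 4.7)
The plan is to mirror the two-scale counting argument from Lemma \ref{lm:2nd-case-key-lemma} and Corollary \ref{cor:2nd-case-key-lemma}, adapted to the much larger scale parameter $l \geq \eta' q$ of the extremely dangerous regime. First I would dyadically decompose $D_q(i) \subseteq \bigcup_{l} D'_{q,l}(i)$. Because each witness $\vect{v}(M) \in \bigwedge^i \Z^{n+1}\setminus\{\vect{0}\}$ has integer coordinates and $g_{\vect{r}}(q)$ contracts $\bigwedge^i W$ by at most $b^{-(r_1+\dots+r_i)q}$, the effective range is $\eta' q \leq l \leq (r_1+\dots+r_i)q/(1+r_1) + O(1)$, with the degenerate case $\vect{w}^{(i)} = 0$ (which forces $\vect{v} \in \vect{w}_+ \wedge \bigwedge^{i-1}V$) handled by the same argument run with a scale parameter attached instead to $\vect{w}^{(i-1)}$. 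Thus it suffices to prove a uniform bound $m(D'_{q,l}(i)) \ll R^{Nm - cl}$ for some $c = c(\vect{r}, n) > 0$ and sum the resulting geometric series.

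Fix such an $l$. I would partition $I$ into subintervals $J$ of length $\delta = R^{-q + (1+c)l}$, and for each $I_q \in \mathcal{D}'_{q,l}(i)$ meeting $J$ pick $x_M \in I_q$ together with its primitive sublattice $\Lambda_i(\vect{v}(M))$. Transferring all witnesses to a common base point $x_1 \in J$ via
\[
g_{\vect{r}}(q)U(\varphi(x_1)) = U(O(1)) \cdot U\!\left(r R^{(1+c)l}\vect{h}\right) \cdot g_{\vect{r}}(q)U(\varphi(x_M))
\]
(with $\vect{h}$ and $n'$ as in the proof of Lemma \ref{lm:2nd-case-key-lemma}, so only the coordinates with $(1-\lambda_j)q \leq (1+c)l$ appear in $\vect{h}$), the $\SL(2,\vect{h})$-analysis of \S\ref{subsec-canonical-representation-sln} shows that the inserted $U$-element leaves the $\bigwedge^i W$ component $\vect{w}^{(i)}(M)$ unchanged and shifts the $\vect{w}_+ \wedge \bigwedge^{i-1} W$ part by at most $R^{(1+c)l}\|\vect{w}^{(i)}(M)\| \leq \rho^i R^{cl}$.

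Next I would contract by $g_{\vect{r}}(-s)$ with $s = \lfloor (1+r_1)l/(r_1+\dots+r_i) \rfloor$. The identity $-1 + r_1 + \dots + r_{i-1} = -(r_i + \dots + r_n) \leq 0$ makes the $\vect{w}_+ \wedge \vect{w}^{(i-1)}$ component contract by $b^{-(r_i+\dots+r_n)s}$, keeping it bounded by $1$ once $c$ is taken smaller than $(r_i+\dots+r_n)/(r_1+\dots+r_i)$; the $\bigwedge^i W$ component is expanded by at most $b^{(r_1+\dots+r_i)s} \leq R^l/\rho^i$, which exactly matches the $R^{-l}$ decay of $\|\vect{w}^{(i)}\|$. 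Crucially, the nested chain $I_q \subset I_{q-1} \subset \dots \subset I_{q-s}$ furnished by the inductive Cantor construction (each $I_{q'} \in \mathcal{I}_{q'}$ satisfies $g_{\vect{r}}(q')U(\varphi(y))\Z^{n+1} \in K_\kappa$ for all $y \in I_{q'}$) keeps the ambient lattice in $K_\kappa$, so Proposition \ref{prop:counting-sublattices} bounds the number of distinct witness sublattices per $J$ by $L \leq \kappa^{-N} = R^{Nm}$.

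Each sublattice contributes a bad region of length $O(R^{-q+l})$ (by the converse direction of the analysis in Lemma \ref{lm:key-lemma}; one also notes that $\mathcal{D}'_{q,l}(i)$ is in fact empty for $l < 2\eta' q - O(1)$), so $m(D'_{q,l}(i) \cap J) \ll R^{Nm + l - q}$, and summing over the $R^{q - (1+c)l}$ choices of $J$ gives $m(D'_{q,l}(i)) \ll R^{Nm - cl}$. Since $q \geq 10^6 n^4 Nm$ and $l \geq \eta' q$, this is at most $R^{-c\eta' q/2}$, and summing in $l$ yields $m(D_q(i)) \ll R^{-\nu q}$ with $\nu = c\eta'/2$. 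The main obstacle is the precise tensor-algebra bookkeeping in the translation-contraction step: the identity $r_1+\dots+r_{i-1} = 1 - (r_i+\dots+r_n)$ is exactly what keeps the $\vect{w}_+ \wedge \vect{w}^{(i-1)}$ component from blowing up under $g_{\vect{r}}(-s)$, and the interplay between $s$ and the length $\delta$ of $J$ has to be verified coordinate-by-coordinate on $\bigwedge^i V$.
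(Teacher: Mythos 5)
Your proposal tries to push the sublattice-counting machinery of Lemma~\ref{lm:2nd-case-key-lemma} all the way up the dyadic scale $l$, but this is precisely what the paper does \emph{not} do, and for two concrete reasons the extension breaks down. The paper only runs the counting argument for $\eta' q \leq l \leq 2\eta' q$; for $l > 2\eta' q$ it switches to an entirely different mechanism, namely twisting the flow by $g^{\eta}(t)$ so that the smallness $\|\vect{w}^{(i)}\| \ll R^{-2\eta' q}$ forces $g_{\vect{r},\eta}(q)U(\varphi(x))\Z^{n+1}$ into a thin cusp $K_{\sigma}$ with $\sigma = R^{-\eta' q/n^2}\rho$, whence Theorem~\ref{thm:non-divergence-kleinbock-margulis} gives the measure bound directly, and (when some $r_j < \eta/n$) an induction on dimension through $\SL(n_1,\R)$.

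The first gap in your approach is the curve linearization. You transfer all witnesses inside $J$ to a common basepoint via $g_{\vect{r}}(q)U(\varphi(x_1)) = U(O(1))\,U(rR^{(1+c)l}\vect{h})\,g_{\vect{r}}(q)U(\varphi(x_M))$, and the $U(O(1))$ term is harmless only when the quadratic Taylor error, after conjugation by $g_{\vect{r}}(q)$, is $O(1)$. That error is $O(R^{-q+2(1+c)l})$, which requires $(1+c)l \leq q/2$. But you have correctly identified the dyadic range as going up to $l \approx (r_1+\dots+r_i)q/(1+r_1)$, and for $i=n$ this is $q/(1+r_1)$, which exceeds $q/2$ whenever $r_1 < 1$. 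In that range the curvature of $\varphi$ cannot be ignored, the $\SL(2,\vect{h})$-structure underlying the whole computation is lost, and the argument has no content. The paper's bounds $\eta' q \leq l \leq 2\eta' q$ (with $\eta' \leq \eta = 1/(100n^2)$) are chosen exactly so that this never happens.

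The second gap is that even if the linearization were legal, the decay exponent you extract is not a constant in the paper's sense. You need $c < (r_i+\dots+r_n)/(r_1+\dots+r_i)$ for the $\vect{w}_+ \wedge \vect{w}^{(i-1)}$ component to stay bounded under $g_{\vect{r}}(-s)$, and then $\nu = c\eta'/2$. Taking $i = n$ gives $c \leq r_n$, which is not bounded below by any function of $n$ alone: the weight $r_n$ may be arbitrarily small (indeed, removing the lower bound $\inf \tau(\vect{r}) > 0$ from Beresnevich's theorem is the whole point of the paper). With $c$ depending on $\vect{r}$, the threshold $q > 10^6 n^4 N m$ no longer guarantees $R^{Nm - cl} \leq R^{-\nu q}$, the constant $M$ in Theorem~\ref{thm:main-task} would have to depend on $\vect{r}$, and the countable-intersection argument collapses. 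The twisted-flow argument in the paper avoids this: its exponent $\alpha\eta'/n^2$ involves only the Kleinbock--Margulis constant $\alpha$ and $\eta' \in [\eta/2,\eta]$, and the dimension-reduction case handles small $r_j$ separately rather than paying for them in the exponent.
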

\begin{proof}
\par For any $ \eta' q \leq  l \leq 2\eta' q $, using the same argument as in the proof of Lemma \ref{lm:2nd-case-key-lemma}, we can prove that 
\[ m(D'_{q,l}(i)) \ll R^{-\frac{l}{20n}}. \]
Therefore, we have that 
\begin{align*}
	m\left( \bigcup_{l = \eta' q}^{2\eta' q} D'_{q,l}(i) \right) & \leq \sum_{l = \eta' q}^{2\eta' q} m(D'_{q,l}(i)) \\ 
	 & \ll  \sum_{l = \eta' q}^{2\eta' q} R^{-\frac{l}{20n}} \ll R^{-\frac{\eta' q}{ 20 n}}.
\end{align*}
\par Let us denote 
\[\mathcal{D}'_q(i) := \bigcup_{l > 2\eta' q} \mathcal{D}'_{q,l}\]
 and 
 \[D'_q(i) : = \bigcup_{I_q \in \mathcal{D}'_q(i)} I_q.\]
 Then it is enough to show that 
 \[ m(D'_q(i)) \ll R^{-\nu q}.\]
\par For any $I_q \in \mathcal{D}'_q(i)$ and $s \in I_q$, there exists $\vect{v} = \vect{v}_1 \wedge \cdots \wedge \vect{v}_i \in \bigwedge^i \Z^{n+1} \setminus \{\vect{0}\}$ such that if we write 
\[ g_{\vect{r}}(q) U(\bphi(s))\vect{v} = \vect{w}_+ \wedge \vect{w}^{(i-1)} + \vect{w}^{(i)} \]
where $\vect{w}^{(i-1)} \in \bigwedge^{i-1} W$ and $\vect{w}^{(i)} \in \bigwedge^i W$, then we have that 
$\|\vect{w}_+ \wedge \vect{w}^{(i-1)}\| \leq \rho^i$ and $\|\vect{w}^{(i)}\| \leq \rho^i R^{-2\eta' q}$.
\par Recall that $\eta = (1+r_1)\eta'$. Let us deal with the following two cases separately:
\begin{enumerate}
\item $r_n \geq \frac{\eta}{n}$.
\item There exists $ 1 < n_1 \leq n $ such that for $r_i \geq \frac{\eta}{n} $ for $1 \leq i < n_1$ and 
$r_i < \frac{\eta}{n}$ for $ n_1 \leq i \leq n$.
\end{enumerate}
\par Let us first deal with the first case. For this case, let us define
\[ g^{\eta}(t) := \begin{bmatrix} b^{-\eta t} & \\ & b^{\eta t/n}\I_n \end{bmatrix} \in \SL(n+1, \R) \]
and $g_{\vect{r}, \eta}(t) := g^{\eta}(t) g_{\vect{r}}(t)$. It is easy to see that 
\[g^{\eta}(t) \vect{w}_+ = b^{-\eta t} \vect{w}_+ = R^{-\eta' t} \vect{w}_+,\]
and 
\[g^{\eta}(t) \vect{w} = b^{\eta t/n} \vect{w} = R^{\eta' t/n} \vect{w}\]
 for any $\vect{w} \in W$. 
\par Then we have that 
\begin{align*}
	\| g_{\vect{r}, \eta}(q)U(\bphi(s))\vect{v}\| & = \|g^{\eta}(q)(\vect{w}_+ \wedge \vect{w}^{(i-1)} + \vect{w}^{(i)})\| \\ 
	& \leq \|g^{\eta}(q)(\vect{w}_+ \wedge \vect{w}^{(i-1)})\| + \|g^{\eta}(q) \vect{w}^{(i)}\| \\
	& =  b^{-\eta q(1- \frac{i-1}{n})} \|\vect{w}_+ \wedge \vect{w}^{(i-1)}\| + b^{\frac{\eta q i}{n}} \|\vect{w}^{(i)}\| \\
	& \leq  b^{-\frac{\eta q}{n}} \rho^i + b^{\eta q} R^{-2\eta' q} \rho^i \leq R^{-\frac{\eta' q}{n}} \rho^i.
\end{align*}
By the Minkowski Theorem, the above inequality implies that the lattice $g_{\vect{r}, \eta}(q)U(\bphi(s))\Z^{n+1}$ contains a nonzero vector with norm $\leq R^{-\frac{\eta' q}{n^2}} \rho$. Therefore, for any $I_q \in \mathcal{D}'_q(i)$ we have that
\[  g_{\vect{r}, \eta}(q)U(\bphi(I_q))\Z^{n+1} \not\in K_{\sigma } \]
where $\sigma = R^{-\frac{\eta' q}{n^2}} \rho$. Then by Corollary \ref{cor:nondegenerate-submanifold-quantitative-estimate}, we have that 
\[ m\left( \{ s \in I: g_{\vect{r},\eta}(q) U(\bphi(s))\Z^{n+1} \not\in K_{\sigma} \} \right) \ll \sigma^{\alpha} =  R^{-\frac{\alpha\eta' q}{n^2}}.\]
This proves that 
\[ m(D'_q (i)) \ll  R^{-\frac{\alpha\eta' q}{n^2}} . \]
\par This finishes the proof for the first case.
\par Now let us take care of the second case. Let us denote 
\[ \xi(t) := \begin{bmatrix} b^{-\beta t} & & & & & & \\ & 1 & & & & & \\  & & \ddots & & & & \\ & & & 1 & & & \\ & & & & b^{r_{n_1} t} & & \\ & & & & & \ddots & \\ & & & & & & b^{r_n t} \end{bmatrix} \in \SL(n+1, \R) \]
where $\beta = \sum_{j= n_1}^n r_j < \eta$ and 
\[g' (t) := \xi(t) g_{\vect{r}}(t) = \begin{bmatrix} b^{\chi t} & & & & & & \\ & b^{-r_1 t} & & & & & \\ & & \ddots & & & & \\ & & & b^{- r_{n_1 - 1} t} & & & \\ & & & & 1 & &  \\ & & & & & \ddots & \\ & & & & & & 1 \end{bmatrix}\]
where $\chi = \sum_{j=1}^{n_1 - 1} r_j$. Then it is easy to see that 
\begin{align*}
\xi(t) \vect{w}_+ &= b^{-\beta t} \vect{w}_+ , \\
\xi(t) \vect{w}_j &= \vect{w}_j
\end{align*}
for $j = 1, \dots, n_1 - 1$, and 
\[\xi(t) \vect{w}_j = b^{r_j t} \vect{w}_j\]
for $j = n_1, \dots, n$. Then we have that 
	\begin{align*}
	\|g'(q) U(\bphi(s)) \vect{v}\| & =  \|\xi(q) (\vect{w}_+ \wedge \vect{w}^{(i-1)} + \vect{w}^{(i)}) \| \\
	 & \leq \|\xi(q)(\vect{w}_+ \wedge \vect{w}^{(i-1)})\| + \|\xi(q) \vect{w}^{(i)}\| \\
	 & \leq \| \vect{w}_+ \wedge \vect{w}^{(i-1)} \| + b^{\beta q} \|\vect{w}^{(i)}\| \\
	 & \leq  \rho^i + b^{\beta q} R^{-2\eta' q} \rho^i \\
	 & \leq   \rho^i + b^{\eta q} R^{-2\eta' q} \rho^i \leq \rho^i + R^{-\eta' q} \rho^i < (2\rho)^i.
	\end{align*}
Moreover, for any $s' \in \Delta(s):= [ s - R^{-q(1-2\eta')} , s + R^{-q(1-2\eta')}]$, we also have that 
\[ \|g'(q) U(\bphi(s')) \vect{v}\| < (2\rho)^i. \]
Let $C > 0$ and $\alpha >0$ be the constants given in Theorem \ref{thm:non-divergence-dani-margulis}. Then by the Minkowski Theorem, the inequality above implies that for any $s' \in \Delta(s)$, the lattice $g'(q) U(\bphi(s'))\Z^{n+1}$ contains a nonzero vector of length $< 2\rho$. Let $\vect{v}_{s'} \in \Z^{n+1}\setminus \{\vect{0}\}$ be the vector such that 
$\|g'(q)U(\bphi(s'))\vect{v}_{s'}\| < 2 \rho$. Let us write 
\[\vect{v}_{s'} = (v_{s'}(0), v_{s'}(1),\dots, v_{s'}(n)).\]
Then for $j = n_1 , \dots, n$, we have that $|v_{s'}(j)| < 2\rho$. Therefore, $v_{s'}(j) = 0$ for any $j = n_1 , \dots, n$. In other words, $\vect{v}_{s'}$ is contained in the subspace spanned $\{\vect{w}_+ , \vect{w}_1, \dots, \vect{w}_{n_1 - 1}\}$. For notational simplicity, let us denote this subspace by $\R^{n_1}$ and denote the set of integer points contained in the subspace by $\Z^{n_1}$. Accordingly, let us denote by $\SL(n_1, \R)$ the subgroup
\[ \left\{ \begin{bmatrix} X & \\ & \I_{n+1 - n_1} \end{bmatrix}: X \in \SL(n_1, \R) \right\} \subset \SL(n+1, \R) \]
and denote by $\SL(n_1, \Z)$ the subgroup of integer points in $\SL(n_1,\R)$. Note that $g'(q) \in \SL(n_1, \R)$. $U(\bphi(s'))$ can also be considered as an element in $\SL(n_1,\R)$ since it preserves $\R^{n_1}$. Then $\|g'(q)U(\bphi(s'))\vect{v}_{s'}\| < 2\rho$ implies that for any $s' \in \Delta(s)$, the lattice $g'(q)U(\bphi(s'))\Z^{n_1}$ contains a nonzero vector of length $< 2\rho$. Let $K_{2\rho}(n_1) \subset X(n_1) = \SL(n_1, \R)/\SL(n_1, \Z)$ denote the set of unimodular lattices in $\R^{n_1}$ which do not contain any nonzero vector of length $< 2 \rho$. Then the claim above implies that 
\[ m(\{s' \in \Delta(s): g'(q) U(\bphi(s')) \Z^{n_1} \not\in K_{2\rho}(n_1) \} ) = m(\Delta(s)). \]
By Theorem \ref{thm:non-divergence-dani-margulis}, there exist $j \in {1, \dots, n_1 -1}$ and $\vect{v}' = \vect{v}'_1 \wedge \cdots \wedge \vect{v}'_j \in \bigwedge^j \Z^{n_1} \setminus\{\vect{0}\}$ such that 
\begin{equation}
\label{equ:reduce-dimension} 
\max\{ \|g'(q)U(\bphi(s'))\vect{v}'\|: s' \in [s- R^{-q(1-2\eta')}, s+ R^{-q(1-2\eta')}] \} < \rho_1^j
\end{equation}
since otherwise we will have that
\[m(\{s' \in \Delta(x): g'(q) U(\bphi(s')) \Z^{n_1} \not\in K_{2\rho}(n_1) \} ) \leq C \left(\frac{2 \rho}{\rho_1}\right)^{\alpha} m(\Delta(s)) < \frac{1}{1000} m(\Delta(s)).\]
Now we have \eqref{equ:reduce-dimension} in dimension $n_1$ and every weight of $g'(q)$ is at least $\eta/n$. Then we can repeat the argument for the first case with $n+1$ replaced by $n_1$ to complete the proof.
\end{proof}
\par Now we are ready to prove Proposition \ref{prop:small-portion-extremely-dangerous}.
\begin{proof}[Proof of Proposition \ref{prop:small-portion-extremely-dangerous}]
\par Recall that in Proposition \ref{prop:count-extremely-dangerous-intervals}, we denote by $E_q$ the union of all $q$-extremely dangerous intervals. By Lemma \ref{lm:key-lemma-extremely-dangerous}, we have that 
\[I_q \subset E_q \cup \bigcup_{i=2}^n D_q(i). \]
 By Proposition \ref{prop:count-extremely-dangerous-intervals} we have that 
\[ m(E_q) \ll R^{-\nu q}\]
for some constant $\nu >0$. On the other hand, by Lemma \ref{lm:key-lemma-extremely-dangerous-2nd-case}, we have that 
\[m(D_q(i)) \ll R^{-\nu q}\]
for any $i = 2, \dots, n$. Therefore, we have that 
\begin{align*}
F(\hat{\mathcal{I}}_{q,0}, I) R^{-q} & =  m\left(\bigcup_{I_q \in \hat{\mathcal{I}}_{q,0}} I_q\right) \\
 & \leq  m\left( E_q \bigcup_{i=2}^n D_q(i) \right) \leq m(E_q) + \sum_{i=2}^n m(D_q(i)) \ll R^{-\nu q}.
\end{align*} 
This completes the proof.
\end{proof}
\par Now we are ready to prove Proposition \ref{prop:distance-sequence-small} for $q > 10^6 n^4 N k$.
\begin{proof}[Proof of Proposition \ref{prop:distance-sequence-small} for $q > 10^6 n^4 N k$]
\par We can choose $R$ such that $R^{\nu} > 1000$. By Proposition \ref{prop:small-portion-extremely-dangerous}, we have that 
\begin{equation}
\label{equ:extremely-dangerous-case}
\left( \frac{4}{R} \right)^q F(\hat{\mathcal{I}}_{q,0}, I)  \ll  \left( \frac{4}{R} \right)^q R^{(1-\nu) q}  =  \left( \frac{4}{R^{\nu}} \right)^q <\left( \frac{4}{1000} \right)^q.
\end{equation}
\par Combining \eqref{equ:q-large-p-large}, \eqref{equ:dangerous-case} and \eqref{equ:extremely-dangerous-case}, we have that 
\[ \sum_{p = 0}^{q-1} \left( \frac{4}{R}\right)^{q-p} \max_{I_p \in \mathcal{I}_p} F(\hat{\mathcal{I}}_{q,p}, I_p) \to 0 \]
as $m \to \infty$. This proves the statement.
\end{proof}
\begin{remark}
\par In \cite{Badziahin-Harrap-Nesharim-Simmons}, Cantor winning property is introduced. It is equivalent to Cantor rich over $\R$ and is defined for higher dimensions.
\end{remark}
\begin{proof}[Proof of Theorem \ref{thm:main-task}]
\par By Definition \ref{def:m-rich}, Theorem \ref{thm:main-task} follows from Proposition \ref{prop:distance-sequence-small}.
\end{proof}
\par By Theorem \ref{thm:hausdorff-dimension-m-rich} and Theorem \ref{thm:intersection-m-rich},, Theorem \ref{thm:main-task} implies Theorem \ref{thm:main-thm} and \ref{thm:double-intersection}.
\subsection{General case}
\label{subsec-general-case}
\par Finally, let us explain how to adapt the proof for curves to handle general $C^n$ non-degenerate submanifolds.
\par Let $\bphi= \bphi(x_1, \dots, x_m): [0,1]^m \to \R^n$ be the $C^n$ differentiable map defining $\mathcal{U}$, where $m = \dim \mathcal{U}$. Then Definition \ref{def:cantor-like-construction} and Definition \ref{def:m-rich} will change according to the dimension. Intervals will be replaced by $m$-dimensional regular boxes. It is easy to see that higher dimensional versions of Theorem \ref{thm:hausdorff-dimension-m-rich} and Theorem \ref{thm:intersection-m-rich} still hold. Therefore, to prove Theorem \ref{thm:main-thm} for higher dimensional manifolds, it suffices to prove higher dimensional versions of Proposition \ref{prop:distance-sequence-small}.
\par Following the argument for curves, we split the proof into four parts: the case where $q$ is small, {\bf the generic case}, {\bf the dangerous case} and {\bf the extremely dangerous case}. When $q$ is small, we can repeat the same argument since Theorem \ref{thm:kleinbock-margulis-nondegenerate-manifold} holds for any dimension. In {\bf the generic case}, we can repeat the same argument since Thereom \ref{thm:non-divergence-dani-margulis} holds for any dimension. In {\bf the dangerous case}, we can consider $\frac{\partial \bphi}{\partial x_j}$ for $j = 1, \dots, m$ instead of $\bphi'(x)$ to prove higher dimensional versions of Proposition \ref{prop:count-dangerous-intervals} and Lemma \ref{lm:key-lemma}. Then the argument works through. In {\bf the extremely dangerous case}, we can consider partial derivatives as in {\bf the dangerous case} to prove higher dimension version of Lemma \ref{lm:key-lemma-extremely-dangerous}. Then we can repeat the same argument since higher dimensional versions of Proposition \ref{prop:count-extremely-dangerous-intervals} and Theorem \ref{thm:kleinbock-margulis-nondegenerate-manifold} still hold.
\par Combining the three cases above, we can deduce Theorem \ref{thm:main-thm} for higher dimensional $C^n$ non-degenerate submanifolds.
\medskip

\bibliography{reference.bib}{}
\bibliographystyle{alpha}
\end{document}